\newcolumntype{P}[1]{>{\centering\arraybackslash}p{#1}}
\newtheorem{theorem}{Theorem}[section]
\newtheorem{lemma}[theorem]{Lemma}
\newtheorem{proposition}[theorem]{Proposition}
\newtheorem{corollary}[theorem]{Corollary}
\newtheorem{conjecture}[theorem]{Conjecture}
\newtheorem*{theorem*}{Theorem}
\theoremstyle{definition}
\newtheorem{observation}[theorem]{Observation}
\newtheorem{remark}[theorem]{Remark}
\newtheorem{example}[theorem]{Example}
\newtheorem{definition}[theorem]{Definition}
\newtheorem{question}[theorem]{Question}
\newtheorem*{general set up}{General set up}
\newtheorem*{description of the action}{Description of the action}
\newtheorem*{chambers decomposition}{Chambers decomposition}
\newcommand{\PP}{\mathbb{P}}
\newcommand{\rM}{\mathrm{M}}
\newcommand{\rN}{\mathrm{N}}
\newcommand{\cO}{\mathcal{O}}
\newcommand{\cT}{\mathcal{T}}
\newcommand{\spec}{\mathrm{Spec}}
\newcommand{\GL}{\mathrm{GL}}
\newcommand{\SL}{\mathrm{SL}}
\newcommand{\PGL}{\mathrm{PGL}}
\newcommand{\git}{/\!/}
\def\l@subsection{\@tocline{2}{0pt}{2.5pc}{5pc}{}}
\author{Alessio Caminata}
\address{Institut de Matem\`{a}tica, Universitat de Barcelona \\ Gran Via de les Corts Catalanes 585, 08007 Barcelona, Spain}
\email{caminata@ub.edu} 
\author{Noah Giansiracusa}
\address{Department of Mathematics and Statistics, Swarthmore College, Swarthmore, PA 19081}
\email{ngiansi1@swarthmore.edu}
\author{Han-Bom Moon}
\address{Department of Mathematics, Fordham University, New York, NY 10023}
\email{hmoon8@fordham.edu}
\author{Luca Schaffler}
\address{Department of Mathematics and Statistics, University of Massachusetts Amherst, Amherst, MA 01003}
\email{schaffler@math.umass.edu}
\keywords{Rational normal curve, Point configuration, Parameter space, Gale transform}
\title{Equations for point configurations to lie on a rational normal curve}
\begin{document}

\maketitle

\begin{abstract}
The parameter space of $n$ ordered points in projective $d$-space that lie on a rational normal curve admits a natural compactification by taking the Zariski closure in~$(\PP^d)^n$.  The resulting variety was used to study the birational geometry of the moduli space~$\overline{\mathrm{M}}_{0,n}$ of $n$-tuples of points in~$\PP^1$. In this paper we turn to a more classical question, first asked independently by both Speyer and Sturmfels: what are the defining equations? For conics, namely $d=2$, we find scheme-theoretic equations revealing a determinantal structure and use this to prove some geometric properties; moreover, determining which subsets of these equations suffice set-theoretically is equivalent to a well-studied combinatorial problem.  For twisted cubics, $d=3$, we use the Gale transform to produce equations defining the union of two irreducible components, the compactified configuration space we want and the locus of degenerate point configurations, and we explain the challenges involved in eliminating this extra component.  For~$d \ge 4$ we conjecture a similar situation and prove partial results in this direction.
\end{abstract}



\section{Introduction}

Configuration spaces are central to many modern areas of geometry, topology, and physics.  Rational normal curves are among the most classical objects in algebraic geometry.  In this paper we explore a setting where these two realms meet: the configuration space of $n$ ordered points in~$\PP^d$ that lie on a rational normal curve.  This is naturally a subvariety of~$(\PP^d)^n$, and by taking the Zariski closure we obtain a compactification of this configuration space, which we call the \emph{Veronese compactification}~$V_{d,n} \subseteq (\PP^d)^n$.  

The Veronese compactification parameterizes configurations of (possibly coincident) points supported on a flat limit of a rational normal curve.  Such a flat limit, if it is non-degenerate, is a rational normal curve or a union of rational normal curves of lower degree where \'etale locally the components meet like the coordinate axes in affine space.  Our main focus is to find and study the multi-homogeneous equations cutting out the Veronese compactification.

\subsection{Results and proof outlines}

First, some notation and conventions.  We work over an algebraically closed field~$\Bbbk$ of arbitrary characteristic.  Let~$[n] := \{1, 2, \ldots, n\}$, and denote the set of cardinality $m$ subsets of $[n]$ by $\binom{[n]}{m}$.  An ordered configuration of $n$ points in~$\PP^d$ is usually written \[\mathbf{p} = (p_1,\ldots,p_n) \in (\PP^d)^n.\]  The Veronese compactification~$V_{d,n}$ is irreducible of dimension $d^2 + 2d + n - 3$ (see Lemma~\ref{lem:Vdnirreducible}) and equals~$(\PP^d)^n$ when $d=1$ or $n \le d+3$, so assume throughout that $d \ge 2$ and $n \ge d+4$.  

We can say the most in the case of conics, $d=2$.
\begin{theorem}
For~$n \ge 6$, we have:
\begin{enumerate}
\item $V_{2,n}$ is defined scheme-theoretically by~$\binom{n}{6}$ determinants of~$6\times 6$ matrices whose entries are quadratic monomials;
\item a subset~$\cT \subseteq \binom{[n]}{6}$ of these determinants defines $V_{2,n}$ set-theoretically if and only if for any partition $I_1\sqcup \cdots \sqcup I_6 = [n]$, there exists $J\in \cT$ such that $|J \cap I_j| = 1$ for all $1 \le j \le 6$---consequently, the number of these determinants that suffice set-theoretically is at least~$\frac{2}{n-4}\binom{n}{6}$;
\item $V_{2,n}$ is Cohen-Macaulay, normal, and it is Gorenstein if and only if~$n = 6$.
\end{enumerate}
\end{theorem}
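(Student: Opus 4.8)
The plan is to reduce all three parts to the geometry of a single determinantal locus. Write $\nu\colon \PP^2 \hookrightarrow \PP^5$ for the degree-two Veronese embedding, sending $p_i=[x_i:y_i:z_i]$ to the vector of its six quadratic monomials, and let $M(\mathbf p)$ be the $n\times 6$ matrix whose $i$-th row is $\nu(p_i)$. A configuration lies on a conic precisely when the rows of $M(\mathbf p)$ are linearly dependent, i.e. $\rk M(\mathbf p)\le 5$, which is cut out by the $\binom n6$ maximal minors $D_J$, $J\in\binom{[n]}6$; these are exactly the $6\times 6$ determinants in quadratic monomials of part~(1). First I would check set-theoretically that the common zero locus of the $D_J$ equals $V_{2,n}$: since $\rk M(\mathbf p)\le 5$ is equivalent to the existence of a nonzero quadratic form vanishing at all $p_i$, this locus is the set of configurations supported on a (possibly degenerate) conic, and I would argue that every such configuration---including points strung along a line or a line-pair---arises as a flat limit of configurations on smooth conics, by degenerating a parametrized smooth conic to a double line or to a pair of lines. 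For the scheme-theoretic statement I would use that $\dim V_{2,n}=n+5$ by Lemma~\ref{lem:Vdnirreducible}, so $V_{2,n}$ has codimension $n-5$ in $(\PP^2)^n$, which is exactly the generic codimension $(n-5)(6-5)$ of the ideal of maximal minors of an $n\times 6$ matrix. Over the Cohen--Macaulay multigraded Cox ring $\Bbbk[x_i,y_i,z_i]$ the ideal $\langle D_J\rangle$ therefore attains the maximal possible grade, so by generic perfection (Eagon--Northcott, Hochster--Eagon) the Eagon--Northcott complex specializes to a finite free resolution of the quotient, which is consequently Cohen--Macaulay and perfect. This already yields the Cohen--Macaulay assertion of part~(3), and---once normality, hence reducedness, is established below---forces $\langle D_J\rangle$ to be the radical, prime ideal of the irreducible variety $V_{2,n}$, completing part~(1).

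For normality I would invoke Serre's criterion: $S_2$ is immediate from Cohen--Macaulayness, so everything reduces to the condition $R_1$. Here I would identify the singular locus with the next determinantal stratum $\{\rk M(\mathbf p)\le 4\}$, which parameterizes configurations lying on a pencil of conics; since two independent conics meet in at most four points, such configurations are forced to be highly degenerate (few distinct points, or points lying on lines), and I expect this locus to have codimension at least two in $V_{2,n}$, as it does in the generic determinantal variety. The principal technical obstacle of the whole argument lives here: because the entries of $M$ are the special Veronese monomials rather than independent variables, I must rule out that the Veronese specialization creates extra codimension-one singularities absent from the generic determinantal variety, and I must compute the codimensions of the degeneracy strata inside the actual configuration space rather than in matrix space. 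With $R_1$ in hand, $R_1+S_2$ gives normality, normality plus irreducibility gives reducedness, and this closes the loop back to the scheme-theoretic claim of part~(1).

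For the Gorenstein dichotomy I would read off the Cohen--Macaulay type from the resolution produced above. When $n=6$ there is a single determinant $D_{[6]}$ and $V_{2,6}$ is a hypersurface in $(\PP^2)^6$ (codimension $n-5=1$), hence automatically Gorenstein. When $n>6$ the codimension is at least two and the variety is a genuine, non-complete-intersection maximal-minors determinantal variety; since generic perfection preserves the top Betti number, the Cohen--Macaulay type of $V_{2,n}$ equals that of the generic determinantal ring, which is strictly greater than one (the maximal-minors determinantal ring being Gorenstein only in the square case). Thus the canonical module fails to be free and $V_{2,n}$ is not Gorenstein, giving the ``if and only if.''

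Finally, part~(2) I would treat as a combinatorial translation of when a subfamily $\cT$ detects all non-conic configurations. For necessity, given a partition $I_1\sqcup\cdots\sqcup I_6=[n]$ admitting no transversal in $\cT$, I would place all points indexed by $I_j$ at a general point $q_j\in\PP^2$: then every $J\in\cT$ has $|J\cap I_j|\ge 2$ for some $j$, so two rows of the corresponding submatrix coincide and $D_J$ vanishes, whereas six general points $q_1,\dots,q_6$ do not lie on a conic, so this clustered configuration lies in $V(\cT)\setminus V_{2,n}$ and $\cT$ fails. For sufficiency I would establish that $\cT$ works if and only if it detects every six-cluster configuration, and that the latter is exactly the transversal condition; the nontrivial implication is that any configuration in $V(\cT)\setminus V_{2,n}$ can be degenerated, through the $\PGL_3$- and torus-invariance of the extra locus, to a six-cluster configuration in non-conic position (collapsing to fewer than six clusters would land on a conic), thereby producing a $6$-partition with no transversal in $\cT$---this degeneration step is the combinatorial crux of~(2). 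The stated lower bound then follows by double counting: restricting to partitions of a controlled shape (for instance five singletons together with one large block, or a suitably weighted family realizing the linear-programming dual), each $J\in\binom{[n]}6$ serves as a transversal for only a bounded number of them, and dividing the total count by this maximal coverage yields $|\cT|\ge \frac{2}{n-4}\binom n6$.
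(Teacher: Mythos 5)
Your proposal contains several components that are correct and close to the paper's (the determinantal set-up, Cohen--Macaulayness via codimension count plus generic perfection, the Gorenstein dichotomy via the Eagon--Northcott resolution/CM type, and the necessity direction of part~(2) via clustered configurations), but it routes the two hardest statements through claims you do not prove, and both are genuine gaps. The first is the $R_1$ step: you assert that the singular locus of the determinantal scheme cut out by the $D_J$ is the rank-$\le 4$ stratum, and you yourself flag the Veronese specialization as ``the principal technical obstacle'' --- but that is exactly the hard content, not a routine check, since singular loci do not pull back along the Veronese substitution. In your architecture this one unproven claim carries double weight: normality in part~(3) \emph{and} reducedness, hence the scheme-theoretic equality of part~(1), both hang on it. The paper does neither this way: it proves the scheme-theoretic equality by induction on $n$, analyzing fibers of $\pi_{[n-1]}\colon W_{2,n}\to W_{2,n-1}$ (each fiber is a conic, a line pair, or $\PP^2$) and propagating reducedness via reduced geometric fibers and Nakayama; and it proves $R_1$ not by locating the singular locus but by exhibiting an open subset of $V_{2,n}$ with codimension-two complement that is isomorphic to an open subset of the normal variety $\overline{\rM}_{0,n}(\PP^{2},2)$, building the inverse morphism explicitly (a flat conic bundle, sections, and blow-ups of the diagonals).

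The second gap is the sufficiency direction of part~(2). Your plan --- degenerate any configuration of $Z(\cT)\setminus V_{2,n}$, using $\PGL_3$-invariance, to a six-cluster configuration in non-conic position --- is unsubstantiated, and I do not believe it works as stated: limits under one-parameter subgroups of $\PGL_3$ collapse points onto the fixed loci (three coordinate points and lines), which always lie on a conic, while moving points individually does not preserve membership in $Z(\cT)$. The paper's mechanism is genuinely different and is the key idea you are missing: pass to $\PP^5$ via the Veronese, where the full group $\GL_6$ acts, prove that transversality of $\cT$ is equivalent to cutting out the degeneracy locus $Y_{5,n}$ by putting a full-rank matrix in row echelon form and reading off the partition from the pivot rows, and then intersect $Y_{5,n}^{\cT}=Y_{5,n}$ with the Veronese image and invoke part~(1). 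Finally, a smaller but real issue: your double count (five singletons plus one large block) yields only $|\cT|\ge \frac{1}{n-5}\binom{n}{6}$, which is strictly weaker than the stated $\frac{2}{n-4}\binom{n}{6}$ for $n>6$; that constant is Sterboul's theorem, which the paper cites (\cite{Ste75}, \cite{BT09}) rather than rederives, and your hedge about ``a suitably weighted family realizing the linear-programming dual'' is not a proof.
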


The starting point here is the elementary observation that six points in~$\PP^2$ lie on a conic if and only if their images under the Veronese embedding $\PP^2 \hookrightarrow \PP^5$ lie on a hyperplane.  This yields a~$6 \times 6$ determinant defining the hypersurface~$V_{2,6} \subseteq (\PP^2)^6$.  We then provide an inductive argument showing that $V_{2,n} \subseteq (\PP^2)^n$ is defined by the pullback of this determinant along all~$\binom{n}{6}$ forgetful maps~$V_{2,n} \twoheadrightarrow V_{2,6}$.  We can then apply results from the theory of determinantal varieties to see that $V_{2,n}$ is Cohen-Macaulay and classify when it is Gorenstein; constructing an isomorphism in codimension one with the Kontsevich stable map space~$\overline{\rM}_{0,n}(\PP^{2},2)$ then yields normality.

The combinatorial property stated above in~(2) is a special case of a set transversality problem that has been studied by many authors (see~\cite{BT09} and the references cited therein).  We prove the first assertion in~(2) by using the Veronese embedding to reduce to the analogous problem of determining which minors of a~$(d+1)\times n$ matrix of homogeneous coordinates for~$(\PP^d)^n$ set-theoretically cut out the locus~$Y_{d,n} \subseteq (\PP^d)^n$ of degenerate point configurations---we solve this problem with a direct linear algebraic analysis.  The second assertion in~(2) then follows from general bounds discussed in~\cite{BT09}, though we also provide a new argument for a bound that is close to this one.

Let us turn now to~$d \ge 3$. Recall that the Gale transform is an involution that associates to $n$ general points in $\mathbb{P}^d$ a configuration of $n$ points in $\mathbb{P}^{n-d-2}$ defined up to the action of $\SL_{n-d-2}$. The locus~$V_{d,n} \subseteq (\PP^d)^n$ is $\SL_{d+1}$-invariant, and the Gale transform provides an involutive isomorphism of GIT quotients~\cite[Corollary III.1]{DO88} \[(\PP^d)^n\git \SL_{d+1} \cong (\PP^{n-d-2})^n\git \SL_{n-d-1}\] sending configurations supported on a rational normal curve of degree~$d$ to configurations supported on a rational normal curve of degree~$n-d-2$~\cite{Gop70,Gop84}.  Our basic idea is the following.  We have the equations for~$V_{2,d+4}$, and the Gale transform sends these to equations that are satisfied on~$V_{d,d+4}$; by pulling these back along the~$\binom{n}{d+4}$~forgetful maps, we get equations that are satisfied on~$V_{d,n}$.    Let us denote by~$W_{d,n} \subseteq (\PP^d)^n$ the subscheme these latter equations define. Moreover, we denote by~$Y_{d,n} \subseteq (\PP^d)^n$ the determinantal variety parameterizing degenerate point configurations, i.e. $Y_{d,n}$ is defined by all~$(d+1) \times (d+1)$~minors of the~$(d+1)\times n$~matrix whose columns are given by the homogeneous coordinates of each copy of~$\PP^d$.

\begin{theorem}
For~$d \ge 3$, let $W_{d,n}$ and $Y_{d,n}$ be the schemes discussed above.  Then:
\begin{enumerate}
\item scheme-theoretically, we have $V_{d,n}\cup Y_{d,n} \subseteq W_{d,n}$;
\item the above inclusion is a set-theoretic equality if~$d=3$ or~$n = d+4$ (we conjecture that equality always holds).
\end{enumerate}
\end{theorem}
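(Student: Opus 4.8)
The plan is to prove the two inclusions separately, and within part~(2) to treat the cases $n=d+4$ and $d=3$ by related but distinct arguments. For part~(1) it suffices to show that each generator cutting out $W_{d,n}$ lies in both $I(V_{d,n})$ and $I(Y_{d,n})$, since the scheme-theoretic union $V_{d,n}\cup Y_{d,n}$ has ideal $I(V_{d,n})\cap I(Y_{d,n})$; because $V_{d,n}$ is irreducible and reduced and $Y_{d,n}$ is a reduced determinantal variety, membership in each of these radical ideals is equivalent to set-theoretic vanishing. For $V_{d,n}$: by construction the generators are pullbacks along the $\binom{n}{d+4}$ forgetful maps of the Gale transforms of the conic equations, and since the Gale transform carries a configuration on a degree-$d$ rational normal curve to one on a conic, these vanish on the non-degenerate on-curve locus of $V_{d,d+4}$, hence—by irreducibility, taking closures, and pulling back—on all of $V_{d,n}$. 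For $Y_{d,n}$: restricted to each $(d+4)$-point subconfiguration, the Gale bracket identity $[p_S]=\pm c\,[q_{S^{c}}]$ identifies the $(d+1)\times(d+1)$ minors of the coordinate matrix with the complementary $3$-fold brackets of the dual, so every pulled-back equation is a bracket polynomial in which each monomial is a product of at least one $(d+1)\times(d+1)$ minor; on $Y_{d,n}$ all such minors vanish, so every equation vanishes there.

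For part~(2) with $n=d+4$ there is a single forgetful map and the Gale-dual space is $\PP^2$. Given $\mathbf p\in W_{d,d+4}\setminus Y_{d,d+4}$, the configuration spans $\PP^d$, so its Gale dual $\mathbf q\in(\PP^2)^{d+4}$ is defined and non-degenerate. Using the bracket identity $[p_S]=\pm c\,[q_{S^{c}}]$ with $c\ne 0$, the vanishing of the transformed conic equations on $\mathbf p$ is equivalent to the vanishing of the original $\binom{d+4}{6}$ conic determinants on $\mathbf q$; by the scheme-theoretic description of $V_{2,d+4}$ in the conic case, this means exactly $\mathbf q\in V_{2,d+4}$. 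Since the Gale transform matches the non-degenerate loci of $V_{2,d+4}$ and $V_{d,d+4}$—configurations on flat limits of conics corresponding to configurations on flat limits of degree-$d$ rational normal curves—we conclude $\mathbf p\in V_{d,d+4}$. Combined with part~(1), this gives the set-theoretic equality.

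For part~(2) with $d=3$ and arbitrary $n\ge 7$, I would reduce to the base case $n=7=d+4$ just proved, so that $W_{3,7}=V_{3,7}\cup Y_{3,7}$ set-theoretically. Since $W_{3,n}$ is defined by pulling $W_{3,7}$ back along all $\binom{n}{7}$ forgetful maps, every point $\mathbf p\in W_{3,n}$ has the property that each of its $7$-point subconfigurations lies on a flat limit of a twisted cubic or is degenerate. Assuming $\mathbf p\notin Y_{3,n}$, I would select a spanning $7$-subset in sufficiently general position, which then lies on a unique twisted cubic $C$, and propagate: for any remaining point $p_k$, a $7$-subset formed from $p_k$ and six of the spanning points again lies on a twisted cubic, which must coincide with $C$ because it shares six general points with it, forcing $p_k\in C$ and hence $\mathbf p\in V_{3,n}$.

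The main obstacle throughout is the reverse inclusion in part~(2). In the base case it is the boundary analysis: verifying that the Gale correspondence between $V_{2,d+4}$ and $V_{d,d+4}$ extends across the degenerate-conic and flat-limit loci and not merely over smooth curves. In the $d=3$ case it is making the local-to-global propagation rigorous when points coincide or when many subconfigurations are themselves degenerate, so that no stray point of $W_{3,n}$ can escape $V_{3,n}\cup Y_{3,n}$. These same difficulties explain why only the conjecture is available for general $d$: the propagation step has no evident higher-dimensional analogue, and separating the wanted component $V_{d,n}$ from the degenerate component $Y_{d,n}$ inside $W_{d,n}$ is precisely the elimination problem flagged in the introduction.
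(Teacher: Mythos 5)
Your part~(1) is essentially the paper's argument and is fine: the $\psi_{I,J}$ vanish on $U_{d,n}$ via Proposition~\ref{prop:Galetransformminor} and hence on its closure $V_{d,n}$, and they vanish on $Y_{d,n}$ because each of their monomials is a product of maximal minors; since $V_{d,n}$ and $Y_{d,n}$ are reduced, this gives the scheme-theoretic containment. The gaps are in part~(2), and in both cases your argument covers only the generic situation while the degenerate situations are the real content. In the base case $n=d+4$ you assert that for $\mathbf{p}\in W_{d,d+4}\setminus Y_{d,d+4}$ the configuration ``spans $\PP^d$, so its Gale dual $\mathbf{q}\in(\PP^2)^{d+4}$ is defined and non-degenerate.'' This is false: the Gale transform is well defined only for \emph{strongly} non-degenerate configurations. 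If some hyperplane contains all but exactly one of the points, then any matrix $B$ with $AB^{t}=0$ has a zero column (Lemma~\ref{lem:nonzerocol}), so $\PP B$ is not a point configuration at all. Such configurations do occur in $W_{d,d+4}$, and the bulk of the paper's proof of Theorem~\ref{thm:Wdd4inYdd4} is devoted to them: one normalizes $A=[\mathrm{Id}_{d+1}|\underline{A}]$ with exactly $e\ge 1$ zero rows of $\underline{A}$, deletes those rows and columns to get a strongly non-degenerate configuration $\mathbf{r}$ in $\PP^{d-e}$ whose Gale dual does lie on a conic, concludes $\mathbf{r}\in V_{d-e,d-e+4}$, and then attaches $e$ lines to the resulting quasi-Veronese curve of degree $d-e$ to build a degree-$d$ quasi-Veronese curve through all of $\mathbf{p}$. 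Your dichotomy ``degenerate or Gale-dualizable'' simply has a hole between these two cases, and the curve-building construction needed to fill it is not in your proposal.

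For $d=3$ the gap is larger. Your propagation argument presupposes a spanning $7$-subset ``in sufficiently general position,'' lying on a unique twisted cubic; when such a subset exists the propagation indeed works (six points in linearly general position in $\PP^3$ lie on a unique quasi-Veronese curve, necessarily a twisted cubic). But points of $W_{3,n}\setminus Y_{3,n}$ need not have any six points in linearly general position: they can be supported on a non-coplanar union of a conic and a line, on a chain of three lines, on three concurrent non-coplanar lines, or can have coincident points. In all of these cases no subconfiguration lies on a smooth twisted cubic, your uniqueness-based propagation never gets started, and yet these are precisely the configurations one must still show lie in $V_{3,n}$. This is why the paper proceeds by induction on $n$, first reducing (via the two-hyperplanes trick) to $\pi_{[n-1]}(\mathbf{p})\in V_{3,n-1}$, then invoking the Veronese-envelope criterion of Lemma~\ref{lem:envelopeinduction}, and finally carrying out a complete enumeration of the configurations containing none of the ``good'' six-point subconfigurations, checking in each case that the envelope of $\pi_{[n-1]}(\mathbf{p})$ is an intersection of envelopes of six-point subconfigurations. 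A related slip: membership of a degenerate $7$-tuple in $V_{3,7}$ does not mean it lies on any curve (Proposition~\ref{degeneratepointconfigonquasiVeronese} only applies to non-degenerate configurations), so your phrase ``lies on a flat limit of a twisted cubic or is degenerate'' conflates two different things. You do flag both difficulties honestly in your closing paragraph, but flagging them is not resolving them: as written, the proof of part~(2) is incomplete exactly where the theorem is hard.
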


Thus we have explicit set-theoretic determinantal equations for the variety \[W_{3,n} = V_{3,n} \cup Y_{3,n}.\]  In the first two non-trivial cases, $V_{3,7}$ and $V_{3,8}$, we prove that every degenerate configuration lies on a flat limit of a twisted cubic (i.e., $Y_{3,7} \subseteq V_{3,7}$ and $Y_{3,8} \subseteq V_{3,8}$), so we actually get equations for these Veronese compactifications (we also get equations for~$V_{4,8}$ in this way).  For~$n \ge 9$, however, $W_{3,n}$ really does have two irreducible components.  Essentially, this is a consequence of the Gale transform not being defined on degenerate configurations.  

We prove three statements that reveal some of the considerable challenges that occur here which did not occur for~$d=2$: (1) $V_{3,n}$ for~$n\ge 9$ is not defined, even set-theoretically, by pulling back the equations for~$V_{3,7}$; (2) $V_{3,n}$ is not normal for~$n \ge 8$; (3) the polynomials that cut~$W_{d,n}$ down to the single irreducible component~$V_{d,n}$ cannot be $\PGL_{d+1}$-invariant.  This third property rules out many geometric constructions for producing equations, while the first property suggests the complexity of the equations increases as~$n$ increases.

\begin{remark}
There are other possible approaches to finding equations for~$V_{d,n}$.  For instance, there are nice expressions for the equations of a rational normal curve in certain standard forms, so one general strategy is to first use the~$\PGL_{d+1}$-action to move an arbitrary point of~$V_{d,n}$ into a standard form and then apply these well-known equations.  However, this introduces a large number of extra variables, the entries of the~$\PGL_{d+1}$~matrix, and the necessary elimination theory appears to get quite complicated quickly.  The Gale transform takes advantage of the fact that $V_{d,n}$ is $\PGL_{d+1}$-invariant and this symmetry significantly cuts down the complexity of the equations.
\end{remark}

\subsection{Background and context}

Our main motivation for embarking on this study is simply that rational normal curves are such ubiquitous classical objects that it seems very natural to ask for the equations describing the locus of point configurations supported on them.  In fact, Speyer asked precisely this question in a MathOverflow post~\cite{Spe14}, and in personal correspondence Sturmfels had asked the second author the same question.  

The condition that points lie on a rational normal curve arises in a variety of settings---for example, six linearly general points in~$\PP^2$ lie on a conic if and only if they are self-associated by the Gale transform~\cite{Cob22} (see also~\cite[Example 2.3.12]{Kap93} for a modern discussion); seven linearly general points in~$\PP^3$ lie on a twisted cubic if and only if they do not satisfy the Minimal Resolution Conjecture~\cite[Proposition 3.3]{CRV93}; the Hilbert function of a configuration of fat points in~$\PP^d$ conjecturally does not depend on their locations if they lie on a rational normal curve~\cite{CEG99}; the blow-up of~$\PP^d$ at any number of points is a Mori Dream Space if the points lie on a rational normal curve~\cite[Theorem 1.2]{CT06}; if~$p_0,\ldots,p_d$ and $q_0,\ldots,q_d$ are two bases for~$\PP^d$, each apolar with respect to a fixed non-degenerate quadratic form, and the codimension of the space of degree~$d$ hypersurfaces with multiplicity~$d-2$ at the~$p_i$ and passing through the~$q_i$ is at least two in the space of degree~$d$ hypersurfaces with multiplicity~$d-2$ at the~$p_i$, then all the~$p_i$ and~$q_i$ lie on a rational normal curve~\cite[Theorem 1.8]{Lan99}.

There are several instances where explicit equations for a moduli space were worked out and used to better understand that moduli space.  Most closely related to the topic of this paper, we have Keel and Tevelev's equations for the Grothendieck-Knudsen compactification~$\overline{\rM}_{0,n}$ of the moduli space of~$n$~points on the line, in its log canonical embedding~\cite{KT09} (see also~\cite{MR17}), and Howard, Millson, Snowden, and Vakil's equations for the GIT compactifications of this same moduli space, in their natural invariant-theoretic projective embeddings~\cite{HMSV09}; also, Ren, Sam, and Sturmfels studied certain 19th-century moduli spaces and used their explicit equations to explore tropicalizations~\cite{RSS14}.  

While the Veronese compactification~$V_{d,n}$ is a parameter space rather than a moduli space (meaning that we do not quotient out by automorphisms), the results in this paper may lead to progress related to the investigations above. For instance, the GIT quotients~$V_{d,n}\git \SL_{d+1}$ were used in~\cite{Gia13,GG12,GJM13} to study the birational geometry of~$\overline{\rM}_{0,n}$, and since $Y_{d,n}$ is unstable for all GIT linearizations, our equations for~$W_{d,n}$ yield equations for \[V_{d,n}\git \SL_{d+1} \subseteq (\PP^d)^n\git \SL_{d+1}\] (definitely for~$d=2,3$, and conjecturally for~all~$d$).  The Howard-Millson-Snowden-Vakil results, while solving a 100-year-old problem, are only the~$d=1$~case of their program, and perhaps our equations for~$V_{d,n}\git \SL_{d+1}$ will help illuminate the unknown equations for~$(\PP^d)^n\git \SL_{d+1}$.  Moreover, the tropicalization of~$V_{2,n}$ is a compactification of the image of the space of phylogenetic trees under the 3-dissimilarity map embedding~\cite{PS04}, so our equations for the former could be used to study the latter.

We defer these potential applications to future investigations and focus here on the structure of the equations for~$V_{d,n}$ and what they imply geometrically.  While our results provide important first steps in exploring this elementary yet richly complex object from an explicit equational perspective, there is clearly more to do and the story will only become more engrossing as the higher~$d$~cases are fully unraveled.  

\subsection*{Acknowledgement}
We thank Bernd Sturmfels for introducing us to this problem and we thank Aldo Conca, Yaim Cooper, Igor Dolgachev, Sam Grushevsky, Seung Jin Lee, Julie Rana, David Speyer, Bernd Sturmfels, Jenia Tevelev, Dennis Tseng, and Matteo Varbaro for helpful conversations regarding it. A special thanks to David Speyer for pointing out a minor mistake in a previous version of this paper, which is now corrected. We also thank the anonymous referee for carefully reading our paper and for the valuable comments. The first author was supported by the European Union's Horizon~2020 research and innovation programme under grant agreement No.~701807; the second author was partially supported by the NSA Young Investigator Grant~H98230-16-1-0015; the third author was partially supported by the Minerva Research Foundation; the fourth author was partially supported by the NSF grant~DMS-1603604 and the Research and Training Group in Algebra, Algebraic Geometry, and Number Theory as a student at the University of Georgia. 


\section{Rational normal curves, configurations, and their limits}

This section contains preliminary material that we rely on in the rest of the paper.  We introduce the main object of interest, a compactification of the space of configurations of distinct points supported on a rational normal curve, see Definition~\ref{definitionUdnVdn}.  We first define this as a closure in the Zariski topology, then we explain how it can also be constructed as the image of a Kontsevich stable map moduli space under a natural morphism, see Proposition~\ref{prop:basicprops}.  With the help of a lemma of Artin, we obtain a useful description of the limit curves supporting configurations in the boundary of this compactification: essentially, they are unions of rational normal curves intersecting nicely, see Proposition~\ref{degeneratepointconfigonquasiVeronese}. 

Recall that a \emph{rational normal curve} in~$\PP^{d}$ is a smooth rational curve of degree~$d$.  Up to projective automorphism there is a unique rational normal curve in~$\PP^d$, namely, the image of the~$d$-th~Veronese map~$\PP^1 \hookrightarrow \PP^d$.   Two well-known and useful classical facts are: 
\begin{enumerate}
\item any $d+3$ points in~$\PP^{d}$ in linearly general position lie on a unique rational normal curve~\cite[Theorem 1.18]{Har95} (this is Castelnuovo's Lemma), and
\item distinct points on a rational normal curve are in linearly general position (this follows from basic properties of the Van der Monde determinant).
\end{enumerate}

\begin{definition}
\label{definitionUdnVdn}
Let $d$ and $n$ be positive integers. 
\begin{enumerate}
\item Let~$U_{d, n} \subseteq (\PP^{d})^{n}$ be the subvariety parameterizing configurations~$\mathbf{p} = (p_{1},  \ldots, p_{n})$ of~$n$ distinct points in~$\PP^d$ such that there exists a rational normal curve~$C \subseteq \PP^d$ with~$p_i \in C$ for all~$i=1,\ldots,n$. 
\item Let~$V_{d, n} \subseteq (\PP^{d})^{n}$ be the Zariski closure of~$U_{d, n}$, equipped with the reduced induced scheme structure.  We call this the \emph{Veronese compactification} of~$U_{d, n}$. 
\end{enumerate}
\end{definition}

Castelnuovo's Lemma implies that $V_{d, n} = (\PP^{d})^{n}$ whenever $n \le d+3$. Clearly $V_{1, n} = (\PP^{1})^{n}$, so from now on we assume $d \ge 2$. 

\begin{lemma}\label{lem:Vdnirreducible}
For~$n \ge d+3$, the variety~$V_{d, n}$ is irreducible of dimension $d^{2}+2d+n-3$. 
\end{lemma}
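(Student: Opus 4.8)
The plan is to exhibit $V_{d,n}$ as the closure of the image of an irreducible parameter space under a dominant rational map, and then compute the dimension by a general-fiber argument. First I would construct a parameter space for pairs (a rational normal curve, a choice of $n$ points on it). Fix the standard rational normal curve $C_0 \subseteq \PP^d$, the image of the $d$-th Veronese map $\nu\colon \PP^1 \hookrightarrow \PP^d$. Every rational normal curve is $\PGL_{d+1}\cdot C_0$, so consider the incidence variety
\[
\Gamma = \bigl\{(g, t_1,\ldots,t_n) : g \in \PGL_{d+1},\ t_i \in \PP^1\bigr\} = \PGL_{d+1}\times(\PP^1)^n,
\]
which is irreducible of dimension $\dim \PGL_{d+1} + n = (d^2+2d) + n$. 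There is a natural morphism
\[
\Phi\colon \Gamma \longrightarrow (\PP^d)^n,\qquad (g,t_1,\ldots,t_n)\longmapsto \bigl(g\cdot\nu(t_1),\ldots,g\cdot\nu(t_n)\bigr),
\]
whose image consists exactly of configurations of (possibly coincident) points lying on some rational normal curve. The open locus where the $t_i$ are distinct maps onto a dense subset of $U_{d,n}$, so the closure of the image of $\Phi$ is precisely $V_{d,n}$.

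Since $\Gamma$ is irreducible and $V_{d,n} = \overline{\Phi(\Gamma)}$ is the closure of a continuous image, irreducibility of $V_{d,n}$ is immediate. For the dimension, I would invoke the fiber-dimension theorem: $\dim V_{d,n} = \dim \Gamma - f$, where $f$ is the dimension of a general fiber of $\Phi$. The content is therefore to compute $f$. Over a general configuration $\mathbf{p} \in U_{d,n}$ with $n \ge d+3$ distinct points, Castelnuovo's Lemma guarantees that the rational normal curve $C$ through $\mathbf{p}$ is \emph{unique}; hence the fiber records only the ambiguity in writing $C = g\cdot C_0$ and in recovering the parameters $t_i$. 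The parameters $t_i$ are then determined (the points $p_i$ pin down $\nu(t_i) = g^{-1}p_i$, hence $t_i$), so the fiber over $\mathbf{p}$ is a torsor under the stabilizer of $C_0$ in $\PGL_{d+1}$, i.e. the subgroup preserving the standard rational normal curve as a set. This stabilizer is the image of $\PGL_2$ acting on $C_0 \cong \PP^1$ by reparametrization (composed with the $\PGL_{d+1}$-action induced by $\nu$), which has dimension $3$.

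Thus the general fiber has dimension $f = \dim \PGL_2 = 3$, giving
\[
\dim V_{d,n} = (d^2+2d+n) - 3 = d^2 + 2d + n - 3,
\]
as claimed. The main obstacle I anticipate is the careful verification that the general fiber is exactly a $\PGL_2$-torsor of dimension $3$ and no larger: this requires (i) the uniqueness of $C$ through $d+3 \le n$ general points, which is supplied by Castelnuovo's Lemma quoted in the excerpt, and (ii) the fact that an automorphism of $\PP^d$ fixing $C_0$ pointwise must be trivial while one fixing $C_0$ setwise induces a unique element of $\PGL_2 = \aut(\PP^1)$ via the Veronese parametrization. Both are standard consequences of the properties of the Van der Monde determinant and the non-degeneracy of the rational normal curve; I would phrase the argument so that the case $n = d+3$ (where $V_{d,n} = (\PP^d)^n$) serves as a consistency check on the dimension count.
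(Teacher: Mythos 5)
Your proposal is correct and follows essentially the same route as the paper: both parametrize configurations via the map $\PGL_{d+1}\times(\PP^1)^n\to(\PP^d)^n$ built from a fixed Veronese embedding, deduce irreducibility from irreducibility of the source, and compute the dimension by identifying the general fiber over a configuration with a unique rational normal curve (via Castelnuovo's Lemma) as a copy of $\PGL_2$. The only cosmetic difference is that you phrase the fiber as a torsor under the stabilizer of the standard curve, while the paper writes it out explicitly as $\{(kh,(x_1,\ldots,x_n))\}$ with $k\in\aut(C)\cong\PGL_2$; these are the same computation.
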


\begin{proof}
Let~$\iota \colon \PP^{1} \hookrightarrow \PP^{d}$ be a fixed rational normal curve.  Consider the map 
\begin{eqnarray*}
	\psi \colon \PGL_{d+1} \times (\PP^{1})^{n} & \to & V_{d, n}\\
	(g, (x_{1}, \ldots, x_{n})) & \mapsto & 
	((g \circ \iota)(x_{1}), \ldots, 
	(g \circ \iota)(x_{n})).
\end{eqnarray*}
Since there is only one rational normal curve up to projective equivalence, we have \[U_{d, n} \subseteq \mathrm{im}\; \psi \subseteq \overline{U}_{d,n} = V_{d,n}.\] Since the domain of~$\psi$ is irreducible, we know that $\mathrm{im}\; \psi$, and hence $V_{d,n}$, is irreducible.

For a general~$\mathbf{p} \in U_{d, n}$, by Castelnuovo's Lemma there is a unique rational normal curve~$C \subseteq \PP^{d}$ that contains $\mathbf{p}$. Let $h \in \PGL_{d+1}$ such that $\mathrm{im} (h \circ \iota) = C$. Then \[\psi^{-1}(\mathbf{p}) = \{(kh, (x_{1}, \ldots, x_{n}))\;|\; k \in \mathrm{Aut}(C) \cong \PGL_{2}, (kh \circ \iota)(x_{i}) = p_{i}\},\] where we view $\PGL_2$ as the subgroup of~$\PGL_{d+1}$ fixing~$C$.  Moreover, since any~$n \ge 3$~points determine a unique automorphism of~$C$ that fixes them, $\psi^{-1}(\mathbf{p}) \cong \PGL_{2}$. Therefore \[\dim V_{d, n} = \dim \left(\PGL_{d+1} \times (\PP^{1})^{n}\right) - \dim \PGL_{2} = d^{2}+2d+n-3,\] as claimed. 
\end{proof}

Since it can be difficult to study a variety defined simply as a Zariski closure, we reinterpret $V_{d,n}$ in terms of a well-known (and well-behaved) moduli space.  Recall that $\overline{\rM}_{0,n}(\PP^d, k)$ is the coarse moduli space of the moduli stack~$\overline{\mathcal{M}}_{0,n}(\PP^{d}, k)$ of genus zero stable maps to~$\PP^d$ of degree~$k$. This is a projective variety (see~\cite{FP97} or~\cite[\S2.3]{KV07} for the characteristic zero case, \cite[Theorem 2.8]{AO01} for positive characteristic). It is also irreducible, normal, and of dimension~$dk + d + k + n - 3$ (see~\cite[Sections 3 and 4]{FP97} for the construction of a cover of $\overline{\rM}_{0,n}(\PP^d,k)$ by normal Zariski open subsets, which is valid for an arbitrary algebraically closed field~$\Bbbk$). A stable map is defined as follows:

\begin{definition}
Let~$X$ be a connected projective curve of arithmetic genus zero with at worst nodal singularities, and let~$x_{1}, \ldots, x_{n}$ be $n$~distinct smooth points of~$X$.  A morphism $f\colon X \to\PP^d$ is \emph{stable} if there are only finitely many automorphisms~$g \colon X \to X$ satisfying $g(x_{i}) = x_{i}$ and $f \circ g = f$. A stable map has \emph{degree}~$k$ if $f_{*}[X] = k[\ell]$ where $[\ell]$ is the homology class of a line in~$\PP^d$.
\end{definition}

The $n$ evaluation maps $\nu_{i} \colon \overline{\rM}_{0, n}(\PP^{d}, k) \to \PP^{d}$ send each equivalence class of stable maps \[(f \colon X \to \PP^d, x_{1},,\ldots,x_{n})\] to the point~$f(x_{i})$. These play a key role in Gromov-Witten theory, as it will appear later in the paper (see the proof of Proposition~\ref{prop:irreducible}).  By taking the product of all~$n$~evaluation maps we obtain the \emph{total evaluation map} \[\nu \colon \overline{\rM}_{0,n}(\PP^{d}, k) \to (\PP^{d})^{n},\] which will play a key role in our study, when~$k=d$.

\begin{proposition}\label{prop:basicprops}
Let $n \ge d+3$. 
\begin{enumerate}
\item The Veronese compactification~$V_{d, n}$ is the scheme-theoretic image of the total evaluation map~$\nu \colon \overline{\rM}_{0,n}(\PP^{d}, d) \to (\PP^{d})^{n}$.
\item For any point configuration $\mathbf{p} = (p_{i}) \in V_{d, n}$, there is a (possibly nodal) rational curve~$X$ and a stable map~$(f \colon X \to \PP^d, x_{1}, \ldots, x_{n})$ with~$f_{*}[X] = d[\ell]$ and $f(x_{i}) = p_{i}$. 
\end{enumerate}
\end{proposition}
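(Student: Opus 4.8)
The plan is to establish the two parts together by analyzing the total evaluation map $\nu$ and its image, leveraging the fact (already recorded before the proposition) that $\overline{\rM}_{0,n}(\PP^d,d)$ is an irreducible projective variety. First I would observe that $\nu$ is a morphism of projective varieties, so its scheme-theoretic image is a well-defined closed subscheme of $(\PP^d)^n$; since the source is irreducible and reduced, and we take the reduced image, the image is an irreducible closed subvariety. The strategy for part~(1) is then to show this image coincides with $V_{d,n}$ by matching a dense open subset. Concretely, I would identify the locus in $\overline{\rM}_{0,n}(\PP^d,d)$ consisting of stable maps $f\colon \PP^1 \to \PP^d$ that are embeddings onto a rational normal curve with $n$ distinct marked points; by the two classical facts recalled in the excerpt (any rational normal curve is the $d$-th Veronese up to $\PGL_{d+1}$, and distinct points on it are in linearly general position), the image of this locus under $\nu$ is exactly $U_{d,n}$. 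Thus $U_{d,n} \subseteq \mathrm{im}\,\nu$, and since the scheme-theoretic image is closed, we get $V_{d,n} = \overline{U_{d,n}} \subseteq \mathrm{im}\,\nu$.

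For the reverse inclusion I would argue that this embedding locus is dense in the source. A general stable map of degree $d$ from a genus-zero curve with $n$ marked points has smooth irreducible domain $\PP^1$ and is an embedding (a birational map onto its image of degree $d$ in $\PP^d$ is a rational normal curve, since a non-degenerate degree-$d$ rational curve in $\PP^d$ is normal); the marked points are generically distinct. Because $\overline{\rM}_{0,n}(\PP^d,d)$ is irreducible, this open dense locus maps into $U_{d,n}$, so $\mathrm{im}\,\nu = \overline{\nu(\text{dense open})} \subseteq \overline{U_{d,n}} = V_{d,n}$. Combining the two inclusions yields $\mathrm{im}\,\nu = V_{d,n}$ as sets, and since both are reduced irreducible closed subschemes agreeing on points, they agree as schemes, proving~(1).

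Part~(2) then follows almost formally from part~(1): if $\mathbf{p} \in V_{d,n} = \mathrm{im}\,\nu$, then $\mathbf{p}$ lies in the image of $\nu$ in the sense of points, so there exists a stable map $(f\colon X \to \PP^d, x_1,\ldots,x_n)$ with $f_*[X]=d[\ell]$ and $\nu(f) = \mathbf{p}$, i.e. $f(x_i)=p_i$ for all $i$. Here $X$ is a genus-zero nodal curve, hence a tree of $\PP^1$'s, which is what "possibly nodal rational curve" means. One subtlety I would be careful about is the distinction between the set-theoretic image and the scheme-theoretic image: a priori a point of the scheme-theoretic image need not be hit by $\nu$. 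This is resolved by noting that $\nu$ is proper (its source is projective), so its set-theoretic image is already closed and equals the underlying set of the scheme-theoretic image; thus every point of $V_{d,n}$ genuinely has a stable-map preimage.

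The main obstacle I anticipate is verifying cleanly that the generic stable map in $\overline{\rM}_{0,n}(\PP^d,d)$ has irreducible domain and is an embedding onto a rational normal curve, rather than, say, a multiple cover of a lower-degree curve or a map with reducible domain. This requires a dimension count comparing the dimension $d^2+2d+n-3$ of $V_{d,n}$ from Lemma~\ref{lem:Vdnirreducible} against the dimension $d^2 + 2d + n - 3$ of $\overline{\rM}_{0,n}(\PP^d,d)$ (the formula $dk+d+k+n-3$ with $k=d$); the agreement of these dimensions confirms that $\nu$ is generically finite onto $V_{d,n}$ and that the degenerate strata (reducible domains, multiple covers) form proper closed subsets, hence do not affect the dense open locus we use.
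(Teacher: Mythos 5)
Your proposal is correct and follows essentially the same route as the paper: identify the (nonempty, open, hence dense by irreducibility) locus of embeddings of $\PP^1$ onto rational normal curves with distinct markings, note its image under $\nu$ is $U_{d,n}$, and conclude from properness and irreducibility of $\overline{\rM}_{0,n}(\PP^d,d)$ that the scheme-theoretic image is the reduced closure $V_{d,n}$, with part~(2) following since the set-theoretic image of a proper morphism is already closed. The concluding dimension count is unnecessary — density of the embedding locus needs only that it is nonempty and open in an irreducible space — but it does no harm.
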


\begin{proof}
Castelnuovo's Lemma implies that $\overline{\rM}_{0,n}(\PP^{d}, d)$ contains an open set parameterizing rational normal curves in~$\PP^d$ with $n$~distinct points.  Since $\overline{\rM}_{0,n}(\PP^{d}, d)$ is an irreducible proper variety, its image under~$\nu$ is then an irreducible closed subvariety of~$(\PP^d)^n$ containing~$U_{d, n}$ as a dense subset.  This implies that $\nu \colon \overline{\rM}_{0,n}(\PP^{d}, d) \to V_{d,n}$ is a surjective birational morphism with reduced scheme-theoretic image (see~\cite[Exercise II.3.11(d)]{Har77}).
\end{proof}

Some caution is needed when applying~(2) above: the image~$f(X)$ may not be a rational curve of degree~$d$ in~$\PP^{d}$, even when $X = \PP^{1}$, because $f$ may not be injective. For instance, $f$ could be a $d$-fold cover of a line.

To describe the boundary configurations in~$V_{d, n}$ and the curves supporting them, the following definition is useful.  (Recall that a curve embedded in projective space is said to be \emph{non-degenerate} if it is not contained in a hyperplane.)

\begin{definition}\label{def:quasiVeronese}
A \emph{quasi-Veronese curve} in~$\PP^{d}$ is a curve of degree~$d$ that is complete, connected, and non-degenerate.
\end{definition}

This terminology differs slightly from that of~\cite[Definition 2.1]{Gia13}, since here we require non-degeneracy.   The following result of Artin shows that quasi-Veronese curves are built out of rational normal curves in a fairly straightforward way.

\begin{lemma}[\protect{\cite[Lemma 13.1]{Art76}}]
Let $C$ be a quasi-Veronese curve in~$\PP^{d}$.  Then: 
\begin{enumerate}
\item each irreducible component of~$C$ is a rational normal curve in its span, 
\item the singularities of~$C$ are \'etale locally the union of coordinate axes in~$\Bbbk^{m}$, and
\item any connected closed subcurve of~$C$ is again a quasi-Veronese curve in its span.
\end{enumerate}
\end{lemma}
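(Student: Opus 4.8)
The plan is to exploit the fact that a quasi-Veronese curve is a curve of \emph{minimal degree}: a non-degenerate connected curve in $\PP^d$ must have degree at least $d$, and $C$ achieves exactly this bound. Write $C = C_1 \cup \cdots \cup C_r$ for the irreducible components, let $L_i = \langle C_i \rangle \cong \PP^{d_i}$ denote the linear span of $C_i$, and recall the classical inequality that a non-degenerate irreducible curve in $\PP^{d_i}$ has degree at least $d_i$, so that $\deg C_i \ge d_i$ for every $i$.

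First I would run a dimension count on the spans. Since $C$ is connected, I can order the components so that each $C_{i+1}$ meets $C_1 \cup \cdots \cup C_i$; setting $M_i = \langle L_1, \ldots, L_i\rangle$, the attachment point lies in $L_{i+1} \cap M_i$, so $\dim M_{i+1} \le \dim M_i + d_{i+1}$. As $C$ is non-degenerate, $M_r = \PP^d$, and telescoping gives $d = \dim M_r \le \sum_i d_i \le \sum_i \deg C_i = \deg C = d$. Hence every inequality is an equality. In particular $\deg C_i = d_i$ for all $i$, and since the irreducible curves of minimal degree in their span are exactly the rational normal curves, this proves~(1). Equality in the span count also forces $\dim(L_{i+1} \cap M_i) = 0$ at each step, so each newly attached component meets the span of the previous ones in a single point and the total configuration is tree-like, with no cycles (a cycle would create an extra incidence and strictly drop the dimension below $\sum_i d_i$).

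For~(2), I would localize at a singular point $p$. Each branch through $p$ belongs to a distinct component $C_i$, which is smooth there (being a rational normal curve), so the branches are smooth with well-defined tangent lines. The extremality established above---that the spans add up with only single-point, minimal intersections---forces the tangent directions of the branches meeting at $p$ to be linearly independent; this independence is precisely the statement that, \'etale locally, $C$ is the union of coordinate axes in some $\Bbbk^m$. I expect this translation from the global linear-algebra extremality to the local \'etale normal form to be the main obstacle: one must carefully control how the branches sit inside the ambient space and rule out tangential or higher-order contact, which is most cleanly handled by inducting on $r$ and peeling off a leaf component of the tree.

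Finally, (3) follows from the same additivity. A connected closed subcurve $C'$ is a union $\bigcup_{i \in S} C_i$ over a subset $S$ indexing a connected subconfiguration; applying the span count within this subtree and using that each internal attachment is still a single point gives $\dim \langle C' \rangle = \sum_{i \in S} d_i = \sum_{i\in S}\deg C_i = \deg C'$. Thus $C'$ is a connected, complete, non-degenerate curve of minimal degree in its span, that is, a quasi-Veronese curve.
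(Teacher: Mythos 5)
The paper offers no proof of this lemma---it is quoted directly from Artin's lectures (\cite[Lemma 13.1]{Art76})---so there is no internal argument to compare yours against; what follows is an assessment of your proposal on its own merits. The structure is correct, and parts (1) and (3) are essentially complete. The span-additivity computation is right: ordering the components so that each $C_{i+1}$ meets the union of its predecessors, the projective dimension formula gives $\dim M_{i+1} \le \dim M_i + d_{i+1}$, and non-degeneracy together with $\deg C_i \ge d_i$ squeezes $d \le \sum_i d_i \le \sum_i \deg C_i = d$, forcing $\deg C_i = d_i$ (so each component is an irreducible non-degenerate curve of minimal degree in its span, hence a rational normal curve) and $\dim(L_{i+1} \cap M_i) = 0$ at every step. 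Since $L_{i+1} \cap M_i$ is a linear space, it cannot contain two distinct points, which makes your ``no cycles'' claim precise; and since the extremality argument applies to \emph{every} admissible ordering, you may order the components of any connected subcurve first, which is exactly the observation needed to make (3) rigorous.

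For (2), the step you flag as the main obstacle closes more quickly than you anticipate, using only what you have already established. The tangent line of $C_{i+1}$ at its attachment point is contained in $L_{i+1}$; were it contained in $M_i$, it would lie inside $L_{i+1} \cap M_i$, a single point, which is absurd. Hence the tangent of each newly attached branch avoids the span of all previous components, in particular the span of the tangent lines of the earlier branches through that point, and linear independence of the tangent directions at every singular point follows by induction along your ordering. (This simultaneously rules out tangential and higher-order contact, since any such contact would force equal tangents.) What remains is the standard local fact that finitely many smooth branches with linearly independent tangent directions form, formally and hence \'etale-locally, a union of coordinate axes: after a linear change placing the tangents at $e_1, \ldots, e_m$, write each branch as a graph $x_j = h_j(x_k)$ with $h_j = O(x_k^2)$ over its tangent axis, and straighten the branches one at a time via $x_j \mapsto x_j - h_j(x_k)$; each substitution fixes the already-straightened axes pointwise because $h_j(0)=0$, so the induction proceeds, and Artin approximation upgrades the formal isomorphism to an \'etale-local one. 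With these additions your argument is a complete, self-contained proof---and it is the standard route to this statement, very likely close to Artin's original one.
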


From Artin's description we see that the curves depicted in Figure~\ref{fig:quasiVeronese} are precisely the types of quasi-Veronese curves that occur in~$\PP^{3}$. 

\begin{figure}[!ht]
\centering
\includegraphics[scale=0.50,valign=t]{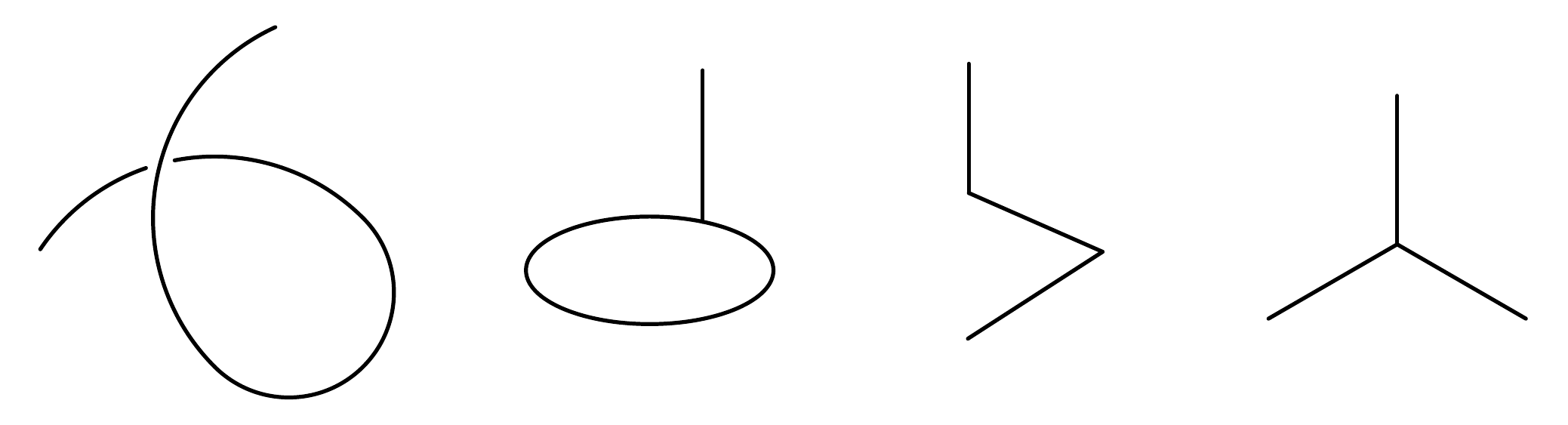}
\caption{The degree three quasi-Veronese curves: twisted cubic, non-coplanar union of line and conic, chain of three lines, and non-coplanar union of three lines meeting at a point.}
\label{fig:quasiVeronese}
\end{figure}

\begin{proposition}
\label{degeneratepointconfigonquasiVeronese}
A non-degenerate point configuration~$\mathbf{p} \in (\PP^{d})^{n}$ is in~$V_{d, n}$ if and only if it lies on a quasi-Veronese curve. 
\end{proposition}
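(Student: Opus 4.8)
The plan is to prove both implications by using Proposition~\ref{prop:basicprops} to pass back and forth between point configurations in~$V_{d,n}$ and genus-zero stable maps to~$\PP^d$ of degree~$d$, combined with the structural description of quasi-Veronese curves provided by the lemma of Artin quoted above.

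For the forward implication, suppose $\mathbf{p} \in V_{d,n}$ is non-degenerate. By Proposition~\ref{prop:basicprops}(2) there is a stable map $(f \colon X \to \PP^d, x_1, \ldots, x_n)$ with $f_*[X] = d[\ell]$ and $f(x_i) = p_i$, and I would study its image $Z := f(X)$. Since $X$ is connected and the~$p_i$ span~$\PP^d$, the set~$Z$ is a connected curve, and because it contains the spanning configuration~$\mathbf{p}$ it is non-degenerate. The class identity $f_*[X] = d[\ell]$ forces $\deg Z \le d$ (passing to the image can only lower the count), while the classical fact that a connected non-degenerate curve in~$\PP^d$ has degree at least~$d$ gives the reverse inequality; hence $\deg Z = d$ and $Z$ is a quasi-Veronese curve containing~$\mathbf{p}$.

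For the reverse implication, suppose the non-degenerate configuration~$\mathbf{p}$ lies on a quasi-Veronese curve~$C$. By Proposition~\ref{prop:basicprops}(1) the total evaluation map~$\nu$ is surjective onto~$V_{d,n}$, so it suffices to realize~$\mathbf{p}$ as the total evaluation of a genus-zero degree-$d$ stable map. Writing $C = C_1 \cup \cdots \cup C_r$ as in Artin's Lemma, each~$C_j$ is a rational normal curve of degree~$d_j$ in its span, with $\sum_j d_j = d$; comparing the total span dimension $\sum_j (d_j+1) = d+r$ with the ambient dimension~$d+1$ shows that the components meet in a tree at single points, so that $C$ has arithmetic genus zero. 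I would then build the source~$X$ by taking one copy of~$\PP^1$ mapping isomorphically onto each~$C_j$, gluing these into a genus-zero nodal curve matching the incidences of the~$C_j$, inserting a contracted~$\PP^1$ wherever three or more branches meet, and placing a marked point~$x_i$ over each~$p_i$. The resulting $f$ satisfies $f_*[X] = d[\ell]$ and $f(x_i) = p_i$, so $\mathbf{p} = \nu(X, f, x_\bullet) \in V_{d,n}$.

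The main obstacle is the stable-map construction in the reverse direction: one must verify that the source can always be chosen of arithmetic genus zero—this is exactly where the span-dimension count is essential—and that stability holds on every component. The components mapping isomorphically onto the~$C_j$ admit no nontrivial automorphism commuting with~$f$, while each contracted~$\PP^1$ carries at least three special points. Coincident points among the~$p_i$ and marked points landing on singularities of~$C$ are handled by the standard device of bubbling off further contracted components, ensuring the~$x_i$ remain distinct smooth points of~$X$.
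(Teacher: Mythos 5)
Your proof is correct, and the converse direction takes a genuinely different route from the paper's. The forward direction agrees with the paper, which merely asserts that a non-degenerate image of a degree-$d$ stable map is a quasi-Veronese curve; your degree count ($\deg f(X)\le d$ from $f_*[X]=d[\ell]$, plus the classical bound $\deg \ge d$ for a connected non-degenerate curve in~$\PP^d$) is precisely the justification this assertion needs. For the converse, the paper never constructs a stable map: it passes to the universal family over an atlas of the stack $\overline{\mathcal{M}}_{0,n}(\PP^d,d)$, forms the family of image curves, and argues via a Hilbert-polynomial computation that this family is flat over a dense open irreducible base, concluding that every quasi-Veronese curve is a flat limit of rational normal curves and hence that configurations on it lie in~$V_{d,n}$. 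You instead exhibit $\mathbf{p}$ directly as a point of the image of~$\nu$, which by Proposition~\ref{prop:basicprops}(1) equals $V_{d,n}$. Your route is more elementary---no flatness or Hilbert polynomials---and it makes explicit something the paper's argument tacitly relies on: that every quasi-Veronese curve really does occur as the image of a marked genus-zero degree-$d$ stable map (otherwise it need not appear as a fiber of the paper's family, and the flat-limit conclusion would not apply to it). What the paper's route buys is the stronger intermediate statement, of independent interest and echoed in the introduction, that quasi-Veronese curves are flat limits of rational normal curves. One step you should expand in a final write-up is the span count: $\sum_j(d_j+1)=d+r$ against the ambient $d+1$ gives a total ``saving'' of $r-1$; connectivity forces each component, as it is attached, to meet the union of the previous ones, and since distinct points of a rational normal curve are linearly independent each attachment accounts for a saving of at least one, so equality forces every attachment to be at a single point and excludes cycles. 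This is exactly what makes your source curve a tree of $\PP^1$'s (after inserting contracted components where three or more branches meet), hence of arithmetic genus zero, and it is where non-degeneracy of the configuration enters irreplaceably.
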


\begin{proof}
Any non-degenerate image of a stable map in~$\overline{\rM}_{0,n}(\PP^{d}, d)$ is a quasi-Veronese curve. Therefore, if $\mathbf{p}\in V_{d,n}$, then it lies on a quasi-Veronese curve. Conversely, we show that quasi-Veronese curves are flat limits of rational normal curves. Consider the stack~$\overline{\mathcal{M}}_{0,n}(\mathbb{P}^d,d)$ and let $A\rightarrow\overline{\mathcal{M}}_{0,n}(\mathbb{P}^d,d)$ be an atlas, where $A$ is a scheme. Then there is a flat family $(\mathcal{A}\rightarrow~A, \mathcal{A} \to \PP^{d}, \sigma_{1}, \sigma_{2}, \cdots, \sigma_{n} \colon A \to \mathcal{A})$ consisting of all the object parametrized by~$\overline{\mathcal{M}}_{0,n}(\mathbb{P}^d,d)$. Denote by~$\mathcal{B}$ the image of~$\mathcal{A}\rightarrow A\times\mathbb{P}^d$, and consider the family~$\mathcal{B}\rightarrow A$ obtained by restricting the projection~$A\times\mathbb{P}^d\rightarrow A$. There exists a dense open subset~$A'\subseteq A$ such that the restricted family~$\mathcal{B}'\rightarrow A'$ parametrizes rational normal curves and quasi-Veronese curves of degree~$d$ in~$\mathbb{P}^d$. It follows from the irreducibility of~$\overline{\rM}_{0,n}(\mathbb{P}^d,d)$ that $A'$ is also irreducible. By induction on the number of irreducible components, one may check that every fiber of~$\mathcal{B}' \rightarrow A'$ has the same Hilbert polynomial. Thus $\mathcal{B}'\rightarrow A'$ is a flat family, showing that any quasi-Veronese curve is a flat limit of rational normal curves.
\end{proof}


\section{The case of conics}\label{sec:equationsofV2n}

The Veronese compactification for conics, $V_{2,n}$, turns out to be rather well-behaved and we are able to say a lot about it.  In this section we first find the single multi-homogeneous equation defining the hypersurface~$V_{2,6} \subseteq (\PP^2)^6$, see Proposition~\ref{prop:equationV26determined}, and then show that by pulling this equation back along all the forgetful maps we get defining equations for~$V_{2,n} \subseteq (\PP^2)^n$ for all~$n$, see Theorem~\ref{thm:eqnV2n}.  This shows, in particular, that the conic Veronese compactification~$V_{2,n}$ is a determinantal variety.  This helps us show that it is Cohen-Macaulay and normal for all~$n$, but Gorenstein only for~$n=6$.  We then turn to the question of finding subsets of our equations which cut out $V_{2,n}$ set-theoretically.  We show that this question is answered precisely by a property of hypergraphs that has been studied by many authors from various combinatorial perspectives, see Theorem~\ref{thm:V2nT}.  

\subsection{The unique equation defining $V_{2,6}$}\label{ssec:equationofV26}

Five general points determine a unique conic and it is a codimension one condition for a sixth point to lie on this conic, so $V_{2, 6} \subseteq (\PP^{2})^{6}$ is a hypersurface and hence is defined by a single multi-homogeneous equation.  This defining equation is classical and well-known (see~\cite[Example~3.4.3]{Stu08}).  In this subsection we will recall one method for producing it, since both the equation and the construction will be useful later on.   In this subsection we denote the homogeneous coordinates of~$(\PP^2)^6$ by \[a=[a_0:a_1:a_2],~b=[b_{0}:b_{1}:b_{2}],~\ldots,~f=[f_0:f_1:f_2].\]

 \begin{definition}
Consider the following multi-homogeneous polynomial of degree~$(2,\ldots,2)$:
\[\label{equation-V26}
\phi(a,\ldots,f):=\left| \begin{array}{cccccc}
a_0^2&b_0^2&c_0^2&d_0^2&e_0^2&f_0^2\\
a_1^2&b_1^2&c_1^2&d_1^2&e_1^2&f_1^2\\
a_2^2&b_2^2&c_2^2&d_2^2&e_2^2&f_2^2\\
a_0a_1&b_0b_1&c_0c_1&d_0d_1&e_0e_1&f_0f_1\\
a_0a_2&b_0b_2&c_0c_2&d_0d_2&e_0e_2&f_0f_2\\
a_1a_2&b_1b_2&c_1c_2&d_1d_2&e_1e_2&f_1f_2
\end{array} \right |,\]
and let~$W_{2,6} = Z(\phi(a,\ldots,f)) \subseteq (\PP^2)^6$ denote the corresponding hypersurface.
\end{definition}

The following result says that $\phi(a,\ldots,f)$ is the unique equation defining $V_{2,6}$. 

\begin{proposition}\label{prop:equationV26determined}
As schemes, we have $V_{2,6} = W_{2,6}$.
\end{proposition}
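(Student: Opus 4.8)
The plan is to prove the two inclusions $V_{2,6} \subseteq W_{2,6}$ and $W_{2,6} \subseteq V_{2,6}$, and then argue that the scheme structures agree. Since $V_{2,6}$ is reduced by definition and $W_{2,6} = Z(\phi)$, the scheme-theoretic equality will follow once I establish set-theoretic equality together with the fact that $\phi$ is a reduced (squarefree, or at least radical-generating) defining equation. The cleanest route is to show that $V_{2,6}$ and $W_{2,6}$ are both irreducible hypersurfaces with the same underlying set, and that $\phi$ cuts out a reduced scheme.

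First I would establish the inclusion $V_{2,6} \subseteq W_{2,6}$ using the geometric meaning of $\phi$. The key observation, already flagged in the text, is that the $i$-th column of the matrix defining $\phi$ is exactly the image of the $i$-th point under the degree-two Veronese embedding $v_2 \colon \PP^2 \hookrightarrow \PP^5$, written in the coordinates $(x_0^2, x_1^2, x_2^2, x_0x_1, x_0x_2, x_1x_2)$. Thus $\phi(a,\ldots,f)$ vanishes precisely when the six Veronese images $v_2(a), \ldots, v_2(f)$ are linearly dependent in $\PP^5$, i.e.\ when they lie on a hyperplane. A classical fact is that six points lie on a conic in $\PP^2$ if and only if their Veronese images lie on a hyperplane in $\PP^5$, because hyperplane sections of $v_2(\PP^2)$ correspond exactly to conics. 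Therefore every configuration in the open set $U_{2,6}$ of six distinct points lying on a smooth conic satisfies $\phi = 0$; since $Z(\phi)$ is closed and $V_{2,6} = \overline{U_{2,6}}$, I would conclude $V_{2,6} \subseteq W_{2,6}$.

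For the reverse inclusion I would use the dimension and irreducibility bookkeeping. By Lemma~\ref{lem:Vdnirreducible}, $V_{2,6}$ is irreducible of dimension $d^2 + 2d + n - 3 = 4 + 4 + 6 - 3 = 11$, which is exactly one less than $\dim (\PP^2)^6 = 12$, so $V_{2,6}$ is an irreducible hypersurface. On the other side, $W_{2,6} = Z(\phi)$ is a hypersurface cut out by a single nonzero polynomial, hence pure of codimension one, and its irreducible components are the zero loci of the distinct irreducible factors of $\phi$. Since $V_{2,6} \subseteq W_{2,6}$ and both have codimension one, $V_{2,6}$ must equal one of the irreducible components of $W_{2,6}$, i.e.\ $V_{2,6} = Z(g)$ for some irreducible factor $g$ of $\phi$. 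To upgrade this to equality of sets I must rule out any other factor, i.e.\ show $\phi$ is (up to scalar) a power of $g$ and in fact equals $g$ itself up to scalar. The cleanest way is to show that $\phi$ is irreducible as a multi-homogeneous polynomial; then $Z(\phi)$ is itself irreducible, forcing $W_{2,6} = V_{2,6}$ set-theoretically, and the squarefreeness of $\phi$ (being irreducible, it is automatically reduced) gives the scheme-theoretic equality because $V_{2,6}$ carries the reduced structure.

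The main obstacle is therefore the irreducibility (and reducedness) of $\phi$. I expect to handle this by a symmetry-and-specialization argument: the polynomial $\phi$ is multi-homogeneous of degree $(2,2,\ldots,2)$ and is alternating under permutations of the six points, and it is $\SL_3$-semi-invariant under the diagonal action (it transforms by a power of the determinant), which already forces any factorization to respect these symmetries. One concrete strategy is to specialize five of the points to a standard general position and examine $\phi$ as a polynomial in the coordinates of the sixth point $f$: it should become (a nonzero multiple of) the equation of the unique conic through the other five, which is an irreducible quadric in $[f_0:f_1:f_2]$; this shows $\phi$ has $f$-degree exactly $2$ and no factor independent of $f$, and by the permutation symmetry the same holds for every point, which combined with the $(2,\ldots,2)$ multidegree strongly constrains any factorization. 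Ruling out a proper factorization into two pieces each of multidegree $(2,\ldots,2,\text{partial})$ then follows from this per-variable irreducibility together with the alternating/antisymmetric structure. An alternative, and perhaps cleaner, finish is simply to note that since $V_{2,6}$ is reduced and irreducible of codimension one it equals $Z(g)$ for an irreducible $g$, and a direct comparison of multidegrees (both $\phi$ and $g$ vanish on $V_{2,6}$, $g$ is the minimal such, and $\deg_f \phi = 2 = \deg_f g$ forces $\phi = c\, g$) yields $W_{2,6} = V_{2,6}$ as reduced schemes without a standalone irreducibility proof.
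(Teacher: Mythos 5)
Your proposal is correct, and while the first half (the Veronese embedding identifying $\phi=0$ with linear dependence of the six images in $\PP^5$) coincides with the paper's, your treatment of the key step --- why $Z(\phi)$ carries the reduced structure of $V_{2,6}$ --- takes a genuinely different route. The paper first gets the set-theoretic equality directly from the classical biconditional (six points lie on a conic iff $\phi=0$), notes that it then suffices to show $\phi$ is not a square, and rules that out by a cofactor-expansion argument reducing to the irreducibility of the generic $3\times 3$ determinant (citing Bruns--Vetter). You instead write $I(V_{2,6})=(g)$ with $g$ irreducible (valid since $V_{2,6}$ is an irreducible divisor in $(\PP^2)^6$, whose Cox ring is a UFD, by Lemma~\ref{lem:Vdnirreducible}), observe $g\mid\phi$, and pin down $g=c\,\phi$ by comparing degrees in each point separately: for five general points the fiber of $V_{2,6}\to(\PP^2)^5$ contains the unique (smooth, hence irreducible) conic through them, forcing $\deg_{x_i}g\ge 2=\deg_{x_i}\phi$ for every $i$, so the cofactor $h=\phi/g$ is constant. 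This is complete and arguably yields more: you get irreducibility of $\phi$ as a byproduct (the paper only establishes squarefreeness) and you avoid the external determinantal input, at the cost of needing the surjectivity and general-fiber description of the forgetful map. Your first proposed finish --- proving irreducibility of $\phi$ purely from per-variable degrees plus the alternating $S_6$-structure --- is the only place where you are vague (you must still rule out factorizations that split the six points into groups, which is easiest to do exactly as in your second finish, by noting that any factor cutting out $V_{2,6}$ must depend on all six points since the forgetful maps are surjective), but since you supply the cleaner alternative this is not a gap.
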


\begin{proof}
Let~$v \colon \PP^2\hookrightarrow\PP^5$ be the Veronese embedding
\begin{equation*}
	[z_0:z_1:z_2]\mapsto[z_0^2:z_1^2:z_2^2:z_0z_1:z_0z_2:z_1z_2].
\end{equation*}
Any conic in~$\PP^{2}$ is the restriction to~$v(\PP^{2})$ of a hyperplane in~$\PP^{5}$. Thus six points $a,\ldots,f\in\PP^2$ lie on a conic if and only if $v(a),\ldots, v(f)$ lie on a hyperplane, which is true if and only if $\phi(a,\ldots,f)=0$. This shows that $V_{2, 6} = W_{2, 6}$ set-theoretically.  Since $V_{2,6}$ is reduced, to show that this equality holds scheme-theoretically it suffices to show that $\phi$ is not a square.

Assume by contradiction that $\phi(a,\ldots,f)=p^2$ for some multihomogeneous polynomial~$p$ of degree~$(1,\dots,1)$.
	In particular, we can write $p^2=(\lambda a_0+\mu a_1+\nu a_2)^2$, with $\lambda,\mu,\nu$ polynomials in the variables~$b,c,d,e,f$.
	By expanding the determinant along the first row, we obtain that $\lambda^2$ is equal to the~$5\times 5$~minor obtained from~ $\phi(a,\ldots,f)$ by removing the first row and column.
	Repeating the same argument for the second and the third row, we obtain that the~$3\times3$~minor formed by the last~$3$~rows and columns of $\phi(a,\ldots,f)$ must be a square as well. 
	On the other hand, this is the determinant of a $3\times3$ matrix whose entries are algebraically independent, therefore it is irreducible by~\cite[Theorem~2.10]{BV88}, and we obtain a contradiction.
\end{proof}

\begin{remark}\label{rem:bracket}
Since $\phi(a, \ldots, f)$ is $\SL_{3}$-invariant, by the fundamental theorem of invariant theory it is a polynomial in the maximal minors of coordinates on~$(\PP^2)^6$.  Indeed, 
\[
	\phi(a, \ldots, f) = |abc||ade||bdf||cef|-|abd||ace||bcf||def|,
\]
where, e.g., $|abc|$ denotes the determinant of the matrix whose columns are the coordinates of the points~$a$, $b$, and $c$.  (Cf.,~(\cite[p.118]{Cob61} and \cite[Example~3.4.3]{Stu08}.)
\end{remark}


\subsection{Pulling back from $V_{2,6}$ to $V_{2,n}$}\label{ssec:equationsV2n}
We will show here that the equations for~$V_{2,n}$ are all obtained by pulling back the unique equation for~$V_{2,6}$ along the natural forgetful maps.  

For any subset~$I \subseteq [n]$, there is a projection map \[\pi_{I} \colon (\PP^{d})^{n} \to (\PP^{d})^{|I|},~(p_{i}) \mapsto (p_{i})_{i \in I}.\] This restricts to a surjective map $V_{d, n} \twoheadrightarrow V_{d, |I|}$ that we also denote by~$\pi_{I}$. 

\begin{definition}\label{def:W2n}
Let~$\phi$ be the multi-homogeneous form defining $V_{2, 6} \subseteq (\PP^{2})^{6}$ (cf.,~\S\ref{ssec:equationofV26}).
\begin{enumerate}
\item For any subset~$I \subseteq [n]$ with~$|I| = 6$, let~$\phi_{I} := \pi_{I}^{*}(\phi)$. \item For~$n\ge 6$, let~$W_{2, n} \subseteq (\PP^{2})^{n}$ be the closed subscheme defined by~$\phi_{I}$ for all~$I \subseteq [n]$ with~$|I| = 6$.
\end{enumerate}
\end{definition}

Clearly~$V_{2, n}\subseteq W_{2,n}$, since if $n$ points in~$\PP^2$ lie on a conic then so do any 6 of them, and we have shown above that $V_{2,6} = W_{2,6}$.  A very simple but useful observation is the following:

\begin{observation}\label{obs:fibersofV}
For~$n\ge 7$, the fibers of the projection map $\pi_{[n-1]} \colon  V_{2,n}\to V_{2,n-1}$ forgetting the last coordinate are, set-theoretically, either a smooth conic, two distinct lines, or~$\PP^2$. A single line cannot occur as a fiber because if $(p_{1}, \ldots, p_{n-1}) \in V_{2, n-1}$ lie on a line, then for any point~$p_{n} \in \PP^{2}$, $(p_{1}, \ldots, p_{n})$ lie on a nodal conic, and so they determine a point in~$V_{2,n}$. 
\end{observation}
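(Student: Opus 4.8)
The plan is to describe the fiber $F = \pi_{[n-1]}^{-1}(\mathbf{p}')$ over a configuration $\mathbf{p}' = (p_1, \ldots, p_{n-1}) \in V_{2,n-1}$ as an explicit subset of the last factor $\PP^2$, namely $F = \{p_n \in \PP^2 : (\mathbf{p}', p_n) \in V_{2,n}\}$. First I would record that $F$ is closed in $\PP^2$, being the intersection of the closed set $V_{2,n}$ with the fiber of the projection $(\PP^2)^n \to (\PP^2)^{n-1}$. The strategy is then to determine $F$ by analyzing the \emph{non-degenerate} configurations $(\mathbf{p}', p_n)$ via Proposition~\ref{degeneratepointconfigonquasiVeronese} and to recover the degenerate ones by taking closures. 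I would split into two cases according to whether $\mathbf{p}'$ spans $\PP^2$ or is contained in a line.

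Suppose first that $\mathbf{p}'$ is non-degenerate. Then $(\mathbf{p}', p_n)$ is non-degenerate for every $p_n$, so by Proposition~\ref{degeneratepointconfigonquasiVeronese} we have $p_n \in F$ if and only if $(\mathbf{p}', p_n)$ lies on a quasi-Veronese curve, i.e.\ on a conic through $\mathbf{p}'$. Thus $F = \bigcup_{C \in \Sigma} C$, where $\Sigma \subseteq \PP^5$ is the linear system of conics passing through $p_1, \ldots, p_{n-1}$; every member of $\Sigma$ contains the non-degenerate $\mathbf{p}'$, so it is a smooth conic or a union of two distinct lines (a double line is degenerate and hence excluded). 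If $\dim \Sigma = 0$ there is a unique such conic $C$ and $F = C$, giving the smooth-conic or two-distinct-lines cases. If $\dim \Sigma \geq 1$, I would pick a pencil inside $\Sigma$: two distinct members share the $\geq 6$ points of $\mathbf{p}'$, so by B\'ezout they share a line component $L$; writing the pencil as $L \cdot (\lambda M_1 + \mu M_2)$ then shows that its members sweep out $L$ together with every line through $M_1 \cap M_2$, whose union is all of $\PP^2$. Hence $F \supseteq \bigcup_{\text{pencil}} C = \PP^2$, so $F = \PP^2$.

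Now suppose $\mathbf{p}'$ lies on a line $L$. For any $p_n \notin L$ the configuration $(\mathbf{p}', p_n)$ is non-degenerate and lies on the nodal conic $L \cup M$, where $M$ is any line through $p_n$, so $p_n \in F$ by Proposition~\ref{degeneratepointconfigonquasiVeronese}; thus $F$ contains the dense open set $\PP^2 \setminus L$. Since $F$ is closed, $F = \PP^2$. This is precisely the content of the stated reason that a single line cannot occur as a fiber: the naively expected fiber $L$ is forced to fill out to all of $\PP^2$ by passing to the closure. Collecting the three possibilities then completes the proof.

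I expect the main point requiring care to be the degenerate locus, where Proposition~\ref{degeneratepointconfigonquasiVeronese} does not directly apply because a fully collinear configuration is not non-degenerate; the closedness-plus-density argument is what bridges this gap, and it is also what rules out a bare line as a fiber. The B\'ezout step—that a positive-dimensional system of conics through six or more points must share a line component and therefore sweep out the whole plane—is the other place to be careful, since it is responsible for the appearance of the $\PP^2$ fibers even when $\mathbf{p}'$ itself spans the plane, for instance when all but one of its points are collinear.
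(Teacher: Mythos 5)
Your overall strategy is sound, and since the paper records this statement as an Observation with no proof beyond the one-line remark about collinear configurations, your write-up is a legitimate fleshing-out: describing the fiber over a non-degenerate $\mathbf{p}'$ as the union of the members of the linear system $\Sigma$ of conics through $\mathbf{p}'$ (via both directions of Proposition~\ref{degeneratepointconfigonquasiVeronese}), and handling collinear $\mathbf{p}'$ by taking the closure of $\PP^2\setminus L$, which neatly avoids applying that proposition to degenerate configurations and is consistent with the paper's own justification.

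There is, however, a genuine gap in your case $\dim\Sigma\ge 1$. You argue that two distinct members of a pencil inside $\Sigma$ ``share the $\ge 6$ points of $\mathbf{p}'$'' and hence by B\'ezout share a line component $L$. But $V_{2,n-1}$ is a compactification: the $n-1$ points of $\mathbf{p}'$ need not be distinct, and B\'ezout needs at least five \emph{distinct} shared points. For instance, a configuration $\mathbf{p}'$ supported on four points in general position (with repetitions) is a non-degenerate point of $V_{2,n-1}$; there $\Sigma$ is the pencil of conics through those four points, whose general members are smooth and share no line, so the representation of the pencil as $L\cdot(\lambda M_1+\mu M_2)$ does not exist and your sweeping argument breaks down. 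The conclusion $F=\PP^2$ is still correct, and the repair actually shortens the proof: for \emph{any} pencil $\lambda F_1+\mu F_2$ inside $\Sigma$ and any $x\in\PP^2$, the member with $(\lambda:\mu)=(F_2(x):-F_1(x))$ passes through $x$, so the pencil sweeps out all of $\PP^2$ with no common-component analysis needed; and since every member of $\Sigma$ contains the non-collinear $\mathbf{p}'$, no member is a double line, so every member is quasi-Veronese and every swept point indeed lies in $F$. With the B\'ezout step replaced by this observation, your proof is complete.
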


\begin{theorem}\label{thm:eqnV2n}
The Veronese compactification~$V_{2, n}$ is defined by~$\phi_{I}$ for all~$I \subseteq [n]$ with $|I|=6$, i.e., $V_{2, n} = W_{2, n}$ as schemes. 
\end{theorem}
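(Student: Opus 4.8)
The plan is to prove the scheme-theoretic equality $V_{2,n}=W_{2,n}$ by induction on $n$. The base case $n=6$ is exactly Proposition~\ref{prop:equationV26determined}. For the inductive step, I assume $V_{2,n-1}=W_{2,n-1}$ scheme-theoretically and want to deduce the same for $n$. Since I already know $V_{2,n}\subseteq W_{2,n}$ as schemes (the $\phi_I$ vanish on $V_{2,n}$, which is reduced), the heart of the matter is the reverse inclusion $W_{2,n}\subseteq V_{2,n}$. I would first establish this set-theoretically and then upgrade to a scheme-theoretic statement.

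For the set-theoretic containment, let $\mathbf{p}=(p_1,\ldots,p_n)$ be a point of $W_{2,n}$; then every $6$-element subconfiguration satisfies $\phi=0$, so by Proposition~\ref{prop:equationV26determined} any six of the $p_i$ lie on a conic. I want to conclude that all $n$ points lie on a single conic (possibly degenerate), i.e.\ that $\mathbf{p}\in V_{2,n}$ by Proposition~\ref{degeneratepointconfigonquasiVeronese}. The natural device is the forgetful map $\pi_{[n-1]}\colon W_{2,n}\to W_{2,n-1}=V_{2,n-1}$: the truncation $(p_1,\ldots,p_{n-1})$ lies in $V_{2,n-1}$, hence on some quasi-Veronese conic $C$, and I must show $p_n$ can be placed consistently, i.e.\ $\mathbf{p}$ lies on a quasi-Veronese curve. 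Here I would invoke Observation~\ref{obs:fibersofV}, which controls the fibers of $\pi_{[n-1]}$ on $V_{2,n}$: the fiber over $(p_1,\ldots,p_{n-1})$ is a smooth conic, a pair of lines, or all of $\PP^2$. The delicate point is that when the first $n-1$ points impose enough conditions (they lie on a \emph{unique} conic $C$), the constraint $\phi_I=0$ for subsets $I$ containing the index $n$ forces $p_n\in C$; when they lie on several conics or a line, the fiber is larger and I must check the remaining $\phi_I$ conditions still land $\mathbf{p}$ on a legitimate quasi-Veronese curve. I would organize this as a short case analysis driven by how many of the first $n-1$ points are in linearly general position.

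For the scheme-theoretic refinement, set-theoretic equality of the underlying reduced schemes is not enough; I need $W_{2,n}$ to be reduced (and of the expected structure) so that $W_{2,n}=V_{2,n}$ as schemes. The cleanest route is to compare $W_{2,n}$ with the determinantal description: each $\phi_I$ is, up to the Veronese reparametrization, a maximal minor of a $6\times n$ matrix, so $W_{2,n}$ is cut out by the maximal minors of a matrix of generic-type entries. I would argue that the projection $\pi_{[n-1]}\colon W_{2,n}\to W_{2,n-1}$ is flat with reduced fibers of the expected dimension—exactly the conics/line-pairs/$\PP^2$ of Observation~\ref{obs:fibersofV}—so that reducedness is preserved under the inductive step; combined with the set-theoretic equality and the fact that $V_{2,n}$ is reduced and irreducible of the correct dimension (Lemma~\ref{lem:Vdnirreducible}), this forces $W_{2,n}=V_{2,n}$ scheme-theoretically.

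The step I expect to be the main obstacle is the set-theoretic containment in the degenerate cases, specifically when the first $n-1$ points do \emph{not} pin down a unique conic (for instance when many of them are collinear, so the fiber of $\pi_{[n-1]}$ jumps to a pair of lines or all of $\PP^2$). In those strata the pairwise-on-a-conic condition is genuinely weaker than lying on a common conic, and I must use the full collection of $\phi_I$—not just those through a fixed basepoint—to rule out spurious configurations; carefully matching these degenerate fibers to the quasi-Veronese curves of Proposition~\ref{degeneratepointconfigonquasiVeronese} is where the real work lies.
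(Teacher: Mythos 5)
Your overall skeleton---induction on $n$ with base case Proposition~\ref{prop:equationV26determined}, comparison of the fibers of $\pi_{[n-1]}$ over $W_{2,n-1}=V_{2,n-1}$ using Observation~\ref{obs:fibersofV}, and an upgrade from sets to schemes via reducedness---is exactly the paper's. But two steps, as written, do not go through. First, the set-theoretic heart of the matter is left as a promissory note. When the fiber $V_{\mathbf{p}}$ over $\mathbf{p}=(p_1,\dots,p_{n-1})$ is a smooth conic or a pair of distinct lines, you assert that the constraints $\phi_I=0$ for $n\in I$ force $p_n$ onto that conic; the danger you must rule out is that \emph{every} such $\phi_I(\mathbf{p},\cdot)$ is identically zero as a quadric in $p_n$, i.e., that no five of the first $n-1$ points determine the conic even though all $n-1$ of them together do. Your proposed ``case analysis by how many points are in linearly general position'' does not engage with this. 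The paper's resolution is a short piece of linear algebra: each $p_i$ gives a functional $\ell_i\in \mathrm{H}^{0}(\mathcal{O}_{\PP^{2}}(2))^{*}$, the intersection $\bigcap_{i=1}^{n-1}\ker\ell_i$ is one-dimensional by hypothesis on $V_{\mathbf{p}}$, so some five of the $\ell_i$ are linearly independent and already cut out that line; the corresponding $J=\{i_1,\dots,i_5,n\}$ gives a nonzero $\phi_J(\mathbf{p},\cdot)$ of degree $2$, which moreover is reduced because $V_{\mathbf{p}}$ is not a line, and this pins $W_{\mathbf{p}}$ down to $V_{\mathbf{p}}$ as schemes.

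Second, your route to reducedness via flatness of $\pi_{[n-1]}\colon W_{2,n}\to W_{2,n-1}$ is not available: the fiber dimension jumps from $1$ (a conic) to $2$ (all of $\PP^{2}$) over configurations lying on a line, and since source and target are irreducible, a flat map would have constant fiber dimension. The paper instead uses only that the base is reduced (induction) and that the geometric fibers are reduced (from the argument above), and then shows reducedness of $W_{2,n}$ on affine charts $\spec B\to\spec A$ by a Nakayama argument: a nilpotent $b\in B$ has zero image in every fiber, so $(b)/\mathfrak{n}(b)=0$ at every maximal ideal $\mathfrak{n}$ of $B$, whence $b=0$ locally everywhere. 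Relatedly, your appeal to the ``generic-type'' determinantal structure is not sound for reducedness: the entries of the $6\times n$ matrix satisfy the Veronese relations, and reducedness of generic determinantal ideals does not automatically survive such a specialization (this is precisely why the paper argues fiberwise, reserving the determinantal machinery for Cohen--Macaulayness later).
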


\begin{proof}
First, we show by induction on~$n$ that the equality $V_{2, n} = W_{2, n}$ holds set-theoretically. The base case~$n = 6$ is Proposition \ref{prop:equationV26determined}. Fix~$n \ge 7$, assume by the inductive hypothesis that  $V_{2, n-1}= W_{2, n-1}$ set-theoretically, and consider the following diagram:
\[
	\xymatrix{V_{2,n} \ar@{^{(}->}[r] \ar@{->>}[d]^{\pi_{[n-1]}} & 
	W_{2, n} \ar@{^{(}->}[r] \ar@{->>}[d]^{\pi_{[n-1]}} &
	(\PP^{2})^{n} \ar@{->>}[d]^{\pi_{[n-1]}}\\
	V_{2, n-1} \ar@{=}[r] & W_{2,n-1} \ar@{^{(}->}[r] & (\PP^{2})^{n-1}.}
\]
For any point $\mathbf{p}=(p_{1},\ldots,p_{n-1})\in V_{2, n-1}=W_{2, n-1}$, consider the fiber $V_{\mathbf{p}}$ (resp.~$W_{\mathbf{p}}$) of $V_{2, n} \rightarrow V_{2, n-1}$ (resp.~$W_{2, n} \rightarrow W_{2, n-1}$). Clearly $V_{\mathbf{p}} \subseteq W_{\mathbf{p}} \subseteq \PP^2$ and we are done if we show that the first containment is always an equality.  If~$V_{\mathbf{p}}=\PP^2$ then equality is automatic, also as schemes, so by~Observation~\ref{obs:fibersofV} we may assume that $V_{\mathbf{p}}$ is either a smooth conic or two distinct lines.   By the definition of~$W_{2,n}$, the fiber is given by
\begin{equation}
\label{Wasintersection}
W_{\mathbf{p}}=\bigcap_{\substack{n\in I\subseteq [n],\\|I|=6}}Z(\phi_I(\mathbf{p},[z_0:z_1:z_2])),
\end{equation}
where the~$z_i$ are homogeneous coordinates on our~$n$-th copy of~$\PP^2$ and each $\phi_I(\mathbf{p},[z_0:z_1:z_2])$ is either $0$ or a nonzero homogeneous polynomial of degree~$2$ in the~$z_i$. 
We are done if there exists $J\subseteq[n]$ with~$n\in J$ and~$|J|=6$ such that the polynomial $\phi_J(\mathbf{p},[z_0:z_1:z_2])$ is nonzero. 
In fact, since $\phi_J(\mathbf{p},[z_0:z_1:z_2])$ has degree~$2$ it follows that $W_{\mathbf{p}}$ is contained in a conic, and so it must be equal to~$V_{\mathbf{p}}$.
In particular, since $V_{\mathbf{p}}$ is not a line, the polynomial $\phi_J(\mathbf{p},[z_0:z_1:z_2])$ cannot be a square of a polynomial of degree~$1$, so it is reduced. 
This must happen for each nonzero polynomial of~\eqref{Wasintersection}, therefore we obtain the equality $V_{\mathbf{p}}= W_{\mathbf{p}}$ as schemes, i.e. geometric fibers are reduced.
 
To show this, let us assume by contradiction that there is no nonzero $\phi_J(\mathbf{p},[z_0:z_1:z_2])$, and hence that $W_{\mathbf{p}}=\PP^2$. This implies that any 5 points from $\mathbf{p}$ do not determine a unique conic, even though all~$n-1$ of them do since by assumption $V_\mathbf{p}$ is a smooth conic or two distinct lines. Each point $p_{i}$ defines a linear functional $\ell_{i} \in \mathrm{H}^{0}(\mathcal{O}_{\PP^{2}}(2))^{*}$, and the intersection $\bigcap_{i=1}^{n-1}\mathrm{ker}\;\ell_{i}$, which is one-dimensional, is the set of conics passing through all of the points $p_{i}$. There must be five linearly independent functionals $\{\ell_{i_j}\}_{j=1}^5$ such that $\bigcap_{j=1}^5\mathrm{ker}\;\ell_{i_j}$ is one-dimensional, but then setting $J=\{i_1,\ldots,i_5,n\}$ gives $\phi_J(\mathbf{p},[z_0:z_1:z_2]) \ne 0$, a contradiction. Thus $V_{2, n} = W_{2, n}$ as sets.

Next, we argue by induction on~$n$ that $W_{2, n}$ is reduced, and hence that $V_{2, n} = W_{2, n}$ as schemes. Again, the base case $n=6$ is Proposition \ref{prop:equationV26determined}.  Since being reduced is a local property, we may replace $W_{2, n}$ and $W_{2, n-1} = V_{2, n-1}$ by affine neighborhoods $\spec \;B$ and $\spec\;A$, respectively. Thus $A$ is a reduced ring by inductive hypothesis, the projection map corresponds to an injective ring homomorphism $A \hookrightarrow B$, and our goal is to show that $B$ is reduced. Suppose that there is $b \in B$ such that $b^{t} = 0$. We may assume that $b \notin A$, because $A$ is reduced. Let $m$ be a maximal ideal of~$A$. Since every geometric fiber is reduced, $A/m\otimes_{A}(b) =0$. For any maximal ideal~$n$ of~$B$ such that $n \cap A = m$, we have \[(b)/n(b) \cong A/m\otimes_{A}(b)/n(b) \cong A/m\otimes_{A}((b) \otimes_{B}B/n) \cong (A/m \otimes_{A}(b)) \otimes_{B}B/n = 0.\] (To understand the first isomorphism of the chain, let $M=(b)/n(b)$. Then $mM=0$, so that $M=M/mM\cong A/m\otimes_AM$.) By Nakayama's lemma, $b = 0$ for some open neighborhood of the closed point $n$. Since this is true for all closed points, $b = 0$ in $\spec \;B$. 
\end{proof}

\subsection{Geometric properties}\label{ssec:propertiesV2n}

One significant consequence of the equational description of~$V_{2.n}$ established in Theorem~\ref{thm:eqnV2n} is that this $d=2$~Veronese compactification is a determinantal variety in the sense of \cite[\S18]{Eis95}.  Indeed, locally on affine charts $V_{2, n}$ is defined by an ideal generated by~$6 \times 6$~minors of a~$6 \times n$~matrix with entries in $\Bbbk[X_{i}, Y_{i}]_{1 \le i \le n}$, where $(X_i,Y_i)$ are affine coordinates for each copy of~$\mathbb{P}^2$.

\begin{corollary}\label{V2nisCM}
The Veronese compactification~$V_{2, n}$ is Cohen-Macaulay. 
\end{corollary}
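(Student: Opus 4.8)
The plan is to exploit the determinantal description of $V_{2,n}$ recorded immediately above and apply the classical Cohen-Macaulayness theory for ideals of maximal minors. On a fixed affine chart of $(\PP^2)^n$ with coordinate ring $S = \Bbbk[X_i,Y_i]_{1\le i \le n}$, the subscheme $V_{2,n}$ is cut out by the $\binom{n}{6}$ polynomials $\phi_I$ of Theorem~\ref{thm:eqnV2n}, and these are precisely the $6\times 6$ minors---that is, the \emph{maximal} minors, since $n \ge 6$---of the $6\times n$ matrix $M$ whose $i$-th column is the vector of (dehomogenized) Veronese coordinates of the $i$-th point. Thus it suffices to prove that $S/I_6(M)$ is Cohen-Macaulay, where $I_6(M)$ denotes the ideal of maximal minors of $M$.

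First I would recall the characteristic-free fact that, for a \emph{generic} $6\times n$ matrix over a polynomial ring, the ideal of maximal minors is perfect of grade equal to the expected codimension $n-6+1 = n-5$; equivalently, the Eagon--Northcott complex resolves the corresponding determinantal ring, which is therefore Cohen-Macaulay (see~\cite[\S18]{Eis95} and~\cite{BV88}). The key mechanism to pass from the generic matrix to our special matrix $M$, whose entries are quadratic monomials, is \emph{generic perfection} (Hochster--Eagon): the specialized ideal $I_6(M) \subseteq S$ is again perfect provided its grade equals the generic value $n-5$, and since $S$ is Cohen-Macaulay, perfection of $I_6(M)$ forces $S/I_6(M)$ to be Cohen-Macaulay as well.

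The only thing left to verify is therefore the numerical condition that $I_6(M)$ has grade---equivalently, since $S$ is Cohen-Macaulay, height---equal to the expected value $n-5$. This is supplied by the dimension formula of Lemma~\ref{lem:Vdnirreducible}: with $d=2$ we have $\dim V_{2,n} = n+5$, so its codimension inside $(\PP^2)^n$, which has dimension $2n$, is exactly $n-5$. Hence the height of $I_6(M)$ meets the expected codimension of maximal minors of a $6\times n$ matrix, and generic perfection applies on every affine chart, giving the corollary.

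I expect the main obstacle to be precisely this codimension bookkeeping: because the entries of $M$ are not algebraically independent but special quadratic monomials, one cannot invoke the generic determinantal result directly, and a naive grade estimate could in principle drop below $n-5$. It is the independent dimension computation of Lemma~\ref{lem:Vdnirreducible}---obtained from the explicit parametrization rather than from the equations themselves---that certifies the grade is maximal and thereby licenses the use of generic perfection over an arbitrary algebraically closed field.
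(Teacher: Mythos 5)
Your proof is correct and follows essentially the same route as the paper: the paper likewise observes that on affine charts $V_{2,n}$ is cut out by the maximal minors of a $6\times n$ matrix, computes the codimension $2n-(n+5)=n-5$ (which rests on Lemma~\ref{lem:Vdnirreducible}) to see it attains the generic value, and then invokes \cite[Theorem~18.18]{Eis95}, which is precisely the generic-perfection statement you spell out. Your write-up simply makes the Eagon--Northcott/Hochster--Eagon mechanism behind that citation explicit.
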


\begin{proof}
The defining ideal on affine charts has codimension~$2n - (n+5) = n-5$, so this follows from~\cite[Theorem~18.18]{Eis95}.
\end{proof}

The Cohen-Macaulay property, together with a careful analysis of the relation between $V_{2,n}$ and $\overline{\rM}_{0,n}(\PP^{2},2)$ on a sufficiently nice locus, allows us to obtain another important geometric property:

\begin{theorem}\label{normalityofV2n}
The Veronese compactification~$V_{2, n}$ is normal. 
\end{theorem}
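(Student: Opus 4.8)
The plan is to prove normality by combining Serre's criterion with the structural results already established. Since $V_{2,n}$ is Cohen-Macaulay by Corollary~\ref{V2nisCM}, it satisfies Serre's condition $(S_2)$ automatically, so by Serre's criterion it suffices to verify the regularity condition $(R_1)$: that $V_{2,n}$ is regular in codimension one, i.e., smooth away from a closed subset of codimension at least two. The author's own phrasing points the way---they mention ``a careful analysis of the relation between $V_{2,n}$ and $\overline{\rM}_{0,n}(\PP^2,2)$ on a sufficiently nice locus''---so the approach will be to transport smoothness information from the stable map space, which is already known to be normal and irreducible, across the total evaluation map $\nu\colon\overline{\rM}_{0,n}(\PP^2,2)\to V_{2,n}$ from Proposition~\ref{prop:basicprops}.

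First I would recall that $\nu$ is a surjective birational morphism (Proposition~\ref{prop:basicprops}) from a normal variety onto $V_{2,n}$. The key is to control where $\nu$ fails to be an isomorphism. Over the open dense locus $U_{2,n}$ parameterizing distinct points on a genuine smooth conic, $\nu$ is an isomorphism, since a smooth conic with $n\ge 6$ marked points has no nontrivial automorphisms fixing the points and the stable map is an embedding. The plan is then to analyze the boundary $V_{2,n}\setminus U_{2,n}$ and show that the locus where $V_{2,n}$ is singular, or where $\nu$ is not a local isomorphism, has codimension at least two. Concretely, I would stratify the boundary according to the degeneration type of the supporting quasi-Veronese curve (a smooth conic with coincident points, or a pair of distinct lines) as classified in Proposition~\ref{degeneratepointconfigonquasiVeronese} and Observation~\ref{obs:fibersofV}, and compute the codimension of each non-smooth stratum. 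The generic boundary divisor should correspond either to configurations with two coincident points on a conic or to configurations lying on a nodal conic (two lines); I would check that along the generic point of each such divisor, either $V_{2,n}$ is already smooth or $\nu$ is an isomorphism there, pushing any genuine singular locus into codimension $\ge 2$.

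The cleanest way to carry this out may be to establish that $\nu$ is an isomorphism over the complement of a codimension-two subset, so that $V_{2,n}$ inherits smoothness in codimension one directly from the smooth locus of the normal variety $\overline{\rM}_{0,n}(\PP^2,2)$. For this I would examine the fibers of $\nu$: away from the locus where the marked stable map has a nontrivial moduli of stabilizations or where the map contracts or multiply-covers components, the fiber is a single reduced point. Since $\nu$ is birational and $V_{2,n}$ is Cohen-Macaulay (hence has no embedded components and is $(S_2)$), showing the exceptional locus of $\nu$ maps to codimension $\ge 2$ in $V_{2,n}$ would let me conclude $(R_1)$ by identifying the smooth locus of $V_{2,n}$ with the image of an open set where $\nu$ is étale.

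\textbf{The main obstacle} I anticipate lies precisely in controlling $(R_1)$ along the boundary divisor of configurations supported on a \emph{nodal conic} (a union of two distinct lines). This is the generic boundary behavior, so it sits in codimension one and cannot be ignored; I must show $V_{2,n}$ is actually smooth at a general such point rather than merely argue it lies in high codimension. This requires a genuinely local computation---either exhibiting explicit local coordinates on $V_{2,n}$ near such a configuration using the determinantal equations $\phi_I$ from Theorem~\ref{thm:eqnV2n}, or verifying that $\nu$ is an isomorphism there by checking that the stable map has trivial automorphisms and that the total evaluation map is unramified at the corresponding point of $\overline{\rM}_{0,n}(\PP^2,2)$. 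The subtlety is that when points distribute across the two lines, the automorphism group of the stable map and the behavior of $\nu$ depend delicately on how many points land on each component and whether any approach the node, so the analysis of ``a sufficiently nice locus'' must be arranged to avoid the bad strata while still covering a dense open subset of each boundary divisor.
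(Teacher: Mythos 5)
Your overall strategy coincides with the paper's: invoke Serre's criterion, use Corollary~\ref{V2nisCM} to dispose of $(S_2)$, and obtain $(R_1)$ by showing that the total evaluation map $\nu$ restricts to an isomorphism from an open subset $\rM$ of the normal variety $\overline{\rM}_{0,n}(\PP^2,2)$ onto an open subset $V\subseteq V_{2,n}$ whose complement has codimension at least two. The bad locus you would need to excise is essentially the one the paper uses (multiple covers of a line, contracted components with three or more markings, degree-one tails with one marking, and configurations with no five points in linearly general position).

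The genuine gap is in how you propose to conclude that $\nu|_{\rM}$ is an isomorphism. You suggest deducing this from the fibers being single reduced points, i.e., from bijectivity together with $\nu$ being unramified (or ``\'etale'') on $\rM$. But bijective plus unramified does not imply isomorphism when the target is not already known to be normal --- which is exactly the situation here, since normality of $V_{2,n}$ is what is being proved. (Standard counterexample: remove one of the two preimages of the node from the normalization of a nodal curve; the resulting map is bijective and unramified but not an isomorphism.) Nor can you check ``\'etale'' directly, since that requires flatness, and miracle flatness is unavailable because the target is not known to be regular; Zariski's Main Theorem is likewise unavailable for the same reason. The paper closes this gap by \emph{explicitly constructing the inverse morphism} $V\to\rM$: it takes the restriction $V_{+}=\pi_{[n]}^{-1}(V)\to V$ of the projection $V_{2,n+1}\to V_{2,n}$, observes that it is a flat family of non-degenerate conics with $n$ tautological sections $\sigma_i(p_1,\ldots,p_n)=(p_1,\ldots,p_n,p_i)$, blows up the pairwise coincidence loci $\Delta_{i,j}$ to separate the sections, and checks that the result is a family of stable maps over $V$, hence induces a morphism $V\to\rM$ inverse to $\nu$. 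This construction is the essential missing idea in your plan; without it (or an equivalent local-coordinate computation at the generic points of the boundary divisors, which you mention only as an unexecuted alternative), the argument does not go through.
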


\begin{proof}
This is trivial for~$n \le 5$, so assume $n \ge 6$.  Since Serre's condition asserts that normality is equivalent to properties~$R_1$ and~$S_2$ holding, and Cohen-Macaulay is equivalent to all $S_k$ holding, it suffices to prove that $V_{2,n}$ is regular in codimension 1.  Let~$\rN' \subseteq \overline{\rM}_{0,n}(\PP^{2},2)$ be the union of the closures of the following three loci of stable maps $(f \colon X \to \PP^{2}, x_{1}, \ldots, x_{n})$: (1) the image of $f$ is a line; (2) the domain is reducible, $X = X_{1} \cup X_{2}$, with $X_1$ mapped to a point and containing at least three marked points; (3) the domain is reducible, $X = X_{1} \cup X_{2}$, with only one marked point on $X_{1}$ and $\deg f_{*}[X_{1}] = 1$.  Let~$\rN$ be the union of~$\rN'$ with the pre-image along the total evaluation map~$\nu \colon \overline{\rM}_{0,n}(\PP^{2},2) \to (\PP^2)^n$ of the closed locus parameterizing point configurations with no five points in general linear position. It is straightforward to see that (i) the restriction of the total evaluation map $\nu \colon \rN \to \nu(\rN)$ is a positive-dimensional fibration while its restriction to the open complement $\rM := \overline{\rM}_{0, n}(\PP^{2}, 2) \setminus \rN$ is bijective, and (ii) if we set $V := \nu(\rM)$ then the codimension of $V_{2, n} \setminus V$ is at least two.  Thus, if we show that $V$ is $R_1$, then it follows that $V_{2,n}$ is $R_1$. To show regularity in codimension 1 of $V$ it suffices to show that the bijective morphism $\nu \colon \rM \to V$ is an isomorphism, because $\rM \subseteq \overline{\rM}_{0, n}(\PP^{2}, 2)$ is normal. To do this, we explicitly construct the inverse morphism. 

For the projection $\pi_{[n]} \colon V_{2, n+1} \to V_{2, n}$, set $V_{+} := \pi_{[n]}^{-1}(V)$ and consider the restriction $\pi_{[n]} \colon V_{+} \to V$. This is a flat family since the fibers are all non-degenerate conics, hence have the same Hilbert polynomial, and the base is reduced. (Note that the fiber over $\mathbf{p}\in V$ cannot be $\mathbb{P}^2$ because there are at least five points $p_{i_1},\ldots,p_{i_5}$ in general linear position.) Furthermore, there are $n$ sections $\sigma_{1}, \ldots, \sigma_{n} \colon V \to V_{+}$ given by 
\[
	\sigma_{i}(p_{1}, \ldots, p_{n}) = (p_{1}, \ldots, p_{n}
	, p_{i}).
\]
For two distinct indices $1 \le i , j \le n$, let $\Delta_{i,j} \subseteq V_{+}$ be the smooth codimension two subvariety defined by $p_{n+1} = p_{i} = p_{j}$. Take the blow-up $\widetilde{V}_{+} \to V_{+}$ of all the $\Delta_{i,j}$ (the order doesn't matter since they are disjoint) and let $\widetilde{\sigma}_{i} \colon V \to \widetilde{V}_{+}$ be the proper transform of the above sections. Consider the family $\widetilde{V}_{+} \to V$ together with the sections $\widetilde{\sigma}_{1}, \ldots, \widetilde{\sigma}_{n}$ and the morphism given by the composition $\widetilde{V}_{+} \to V_{+} \xrightarrow{\pi_{n+1}} \PP^{2}$. Then $\widetilde{V}_{+} \to V$ is a flat family of stable maps inducing a morphism $V \to \rM$ that one can verify is the inverse of~$\nu$. 
\end{proof}

A natural question which may arise at this point is whether the variety~$V_{2,n}$ is also Gorenstein.  The answer is the following:

\begin{proposition}
The Veronese compactification~$V_{2, n}$ is Gorenstein if and only if~$n=6$. 
\end{proposition}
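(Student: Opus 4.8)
The plan is to prove the two implications separately, with the substance lying in the converse. For $n=6$ the variety $V_{2,6}$ is the hypersurface $Z(\phi) \subseteq (\PP^2)^6$ by Proposition~\ref{prop:equationV26determined}; since it is reduced and cut out by a single equation in the smooth variety $(\PP^2)^6$, it is a local complete intersection and hence Gorenstein. So the work is to show that for $n \ge 7$ the variety $V_{2,n}$ fails to be Gorenstein. Because $V_{2,n}$ is Cohen--Macaulay by Corollary~\ref{V2nisCM} and the Gorenstein locus of a Cohen--Macaulay scheme is open, it suffices to exhibit a single point at which the local ring is not Gorenstein.

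To locate such a point I would recognize $V_{2,n}$, through the Veronese embedding $v \colon \PP^2 \hookrightarrow \PP^5$, as a well-behaved slice of a genuinely generic determinantal variety. Write $v^n \colon (\PP^2)^n \hookrightarrow (\PP^5)^n$ for the product embedding, let $W := v(\PP^2)$ be the Veronese surface, and let $Y_{5,n} \subseteq (\PP^5)^n$ be the locus where the $6\times n$ matrix of homogeneous coordinates has rank at most $5$. As in Proposition~\ref{prop:equationV26determined}, $n$ points of $\PP^2$ lie on a conic exactly when their Veronese images lie on a hyperplane, so $v^n(V_{2,n}) = W^n \cap Y_{5,n}$. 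Comparing dimensions ($\dim V_{2,n} = n+5$ by Lemma~\ref{lem:Vdnirreducible}, while $\dim W^n = 2n$ and $\dim Y_{5,n} = 4n+5$), the codimensions $3n$ and $n-5$ sum to the codimension $4n-5$ of $V_{2,n}$, so this is a proper intersection in $(\PP^5)^n$.

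Next I would use properness to transfer the Gorenstein question to $Y_{5,n}$. As $Y_{5,n}$ is Cohen--Macaulay and $W^n$ is smooth, hence locally cut out by a regular sequence of $3n$ elements, properness (together with irreducibility of $V_{2,n}$, which forces the local dimension to drop by exactly $3n$ at every point) shows these $3n$ equations restrict to a regular sequence on $\mathcal{O}_{Y_{5,n},q}$ for each $q \in W^n \cap Y_{5,n}$; a Cohen--Macaulay-plus-generically-reduced argument identifies the scheme-theoretic intersection with the reduced $V_{2,n}$, giving $\mathcal{O}_{V_{2,n},p} \cong \mathcal{O}_{Y_{5,n},q}/(\text{regular sequence})$ with $q = v^n(p)$. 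Since the Gorenstein property is preserved and reflected by passing to a regular-sequence quotient, $V_{2,n}$ is Gorenstein at $p$ if and only if $Y_{5,n}$ is Gorenstein at $q$. Now $Y_{5,n}$ is a generic determinantal variety of maximal $6\times 6$ minors of a $6\times n$ matrix of indeterminates, and by the classification of Gorenstein generic determinantal rings~\cite{BV88} such a ring is Gorenstein precisely when the matrix is square, its Cohen--Macaulay type being $\binom{n-1}{5}$. For $n\ge 7$ I would pin this down at an explicit point lying in $W^n$, namely the image $q = v^n(p_0,\ldots,p_0)$ of a diagonal configuration, where all columns coincide and the matrix has rank $1$; localizing the determinantal variety at a rank-one point reduces, up to a smooth factor, to the cone of maximal minors of a generic $5\times(n-1)$ matrix, of type $\binom{n-2}{4}>1$. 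Hence $Y_{5,n}$, and therefore $V_{2,n}$, fails to be Gorenstein at this point.

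The main obstacle, and the reason for routing through the Veronese surface instead of working directly with the determinantal presentation of Theorem~\ref{thm:eqnV2n}, is that the $6\times n$ matrix cutting out $V_{2,n}$ is far from generic: its columns are Veronese parametrizations, so its rows satisfy relations (one row is the square of another), and one cannot simply read the Cohen--Macaulay type off the Eagon--Northcott complex as in the generic case, since that complex need not be minimal for a special matrix. The reformulation $v^n(V_{2,n}) = W^n \cap Y_{5,n}$ restores genericity by presenting $V_{2,n}$ as a regular-sequence section of the honestly generic $Y_{5,n}$. The point demanding the most care is checking that the intersection remains proper at the special low-rank configurations used to detect non-Gorensteinness, so that the regular-sequence transfer is valid exactly where it is needed; this is where the irreducibility of $V_{2,n}$ is essential, guaranteeing the full codimension drop at every point.
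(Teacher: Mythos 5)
Your argument is correct, and it reaches the conclusion by a genuinely different route from the paper. The paper also reduces everything to the Cohen--Macaulay type of a maximal-minors ideal of a $6\times n$ matrix, but it does so directly: it localizes $V_{2,n}$ at a point, notes that the ideal of $6\times 6$ minors of the matrix $M$ of Veronese monomials has the maximal codimension $n-5$, invokes \cite[Theorem~2.16]{BV88} to conclude that the Eagon--Northcott complex of $\varphi_M\colon R^n\to R^6$ resolves $R/I$, and reads off the last Betti number as the rank $\binom{n-1}{5}$ of $\mathrm{Sym}^{n-6}(R^6)$, which is $1$ exactly when $n=6$. You instead restore genericity first, presenting $v^n(V_{2,n})$ as a proper regular-sequence section of the generic determinantal variety $Y_{5,n}\subseteq(\PP^5)^n$, transferring the Gorenstein property across that section, and then quoting the classification of Gorenstein generic determinantal rings at an explicit rank-one point (the totally coincident configuration), where the local type is $\binom{n-2}{4}>1$ for $n\ge 7$. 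The trade-off is real: the paper's computation is shorter and makes no choice of point, but, as you observe, the Eagon--Northcott complex of the special matrix $M$ need not be minimal, so its last free module only bounds the type from above --- enough for the $n=6$ direction but not, strictly speaking, for the converse; your detour through $Y_{5,n}$ closes exactly that gap at the cost of verifying the proper-intersection and regular-sequence-transfer steps (the scheme-theoretic identification $W^n\cap Y_{5,n}=v^n(V_{2,n})$ you sketch is in fact already available from Theorem~\ref{thm:eqnV2n}, since the $\phi_I$ are precisely the pulled-back minors cutting out $Y_{5,n}$). Your proof also has the minor virtue of exhibiting a concrete non-Gorenstein point of $V_{2,n}$.
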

\begin{proof}
This can be checked locally.
The stalk at a point of~$V_{2, n}$ is given by~$A=R/I$, where $R$ is a local regular ring of dimension~$2n$ and $I$ is an ideal of codimension~$n-5$ generated by~$6 \times 6$~minors of a~$6 \times n$~matrix~$M$ with entries in~$R$.
By Corollary~\ref{V2nisCM}, $A$ is Cohen-Macaulay of dimension~$n+5$, therefore by~\cite[Theorem~3.2.10]{BH98} $A$ is Gorenstein if and only if $\mathrm{Ext}_A^{n+5}(\Bbbk,A)$ has $\Bbbk$-dimension~$1$.
We have an isomorphism \[\mathrm{Ext}_A^{n+5}(\Bbbk,A)\cong\mathrm{Tor}^R_{2n-n-5}(\Bbbk,A),\] therefore $A$ being Gorenstein is equivalent to the last non-zero Betti number \[\beta_{n-5}^R(A)=\dim_{\Bbbk}\mathrm{Tor}^R_{n-5}(\Bbbk,A)\] being equal to $1$.
Since $\mathrm{codim}\; I = n-5$, by~\cite[Theorem~2.16]{BV88} a $R$-free resolution of~$R/I$ is given by the Eagon-Northcott complex of the map $\varphi_M\colon R^n\rightarrow R^6$ corresponding to the matrix~$M$.
The last non-zero module of the complex is in homological position~$n-5$ and it is~$\mathrm{Sym}^{n-6}(R^6)$. Now, $A$ is Gorenstein if and only if this free module has rank~$1$, which holds if and only if~$n=6$.	
\end{proof}

\subsection{Toward a smaller set of defining equations}\label{sec:smallereqs}

We showed above that the ideal defining $V_{2,n}$ is generated by~$\binom{n}{6}$~polynomials, namely, the pull-backs of the defining polynomial for~$V_{2,6}$ along all forgetful maps $V_{2,n} \to V_{2,6}$.  Here we will show that $V_{2,n}$ can be cut out set-theoretically by certain proper subsets of these polynomials.  We first study the analogous problem of minimal collections of set-theoretic equations for generic determinantal varieties in a product of projective spaces and then show how the case of~$V_{2,n}$, which is determinantal but with relations among the matrix entries, reduces to this case of indeterminate entries.  In both cases, the combinatorics controlling which subsets of polynomials are set-theoretically valid turns out to be an interesting problem concerning hypergraphs and set transversals that has been studied from multiple perspectives in the combinatorics literature.

\begin{definition}\label{def:Ydn}
The \emph{space of degenerate point configurations} (those lying on a hyperplane) is the determinantal variety~$Y_{d, n} \subseteq (\PP^{d})^{n}$ defined by all $(d+1) \times (d+1)$~minors of the $(d+1)\times n$~matrix whose columns are given by the homogeneous coordinates of each copy of~$\PP^d$.
\end{definition}

Note that $Y_{d, n} = (\PP^{d})^{n}$ if~$n \le d$, since in this case there are no size~$d+1$~minors (and also any $d$~points in~$\PP^d$ lie on a hyperplane), so we are generally interested in the case~$n \ge d+1$.  

For a subset $H \subseteq [n]$ with~$|H| = d+1$, let $m_{H}$ be the minor in our matrix of homogeneous coordinates corresponding to the columns indexed by~$H$, and for a collection $\cT \subseteq {[n] \choose d+1}$ of such subsets let~$Y_{d, n}^{\cT} \subseteq (\PP^{d})^{n}$ be the closed subscheme defined by the multi-homogeneous polynomials $\{m_{H}\}_{H \in \cT}$. By definition we have $Y_{d, n}^{{[n] \choose d+1}} = Y_{d, n}$ and if $\cT_{1} \subseteq \cT_{2}$ then~$Y_{d, n}^{\cT_{2}} \subseteq Y_{d, n}^{\cT_{1}}$. 

\begin{question}
Two natural questions immediately arise:
\begin{enumerate}
\item For which collections~$\cT$ does the equality $Y_{d, n}^{\cT} = Y_{d, n}$ hold?
\item What is the minimal cardinality of a collection~$\cT$ such that $Y_{d, n}^{\cT} = Y_{d, n}$?
\end{enumerate}
\end{question}

Both questions can be asked at the level of varieties (that is, set-theoretically) or at the level of schemes.  We shall focus on the former, since in that case we find a very natural combinatorial answer.

Recall that a \emph{$k$-uniform hypergraph}~$\mathcal{H}$ is by definition a subset of~${[n] \choose k}$, and each $H\in \mathcal{H}$ is called an \emph{edge}. 

\begin{definition}
We say that the \emph{transversality property} holds for a $k$-uniform hypergraph~$\mathcal{H}$  (or that $\mathcal{H}$ satisfies the transversality property) if for any partition~$I_{1} \sqcup \cdots \sqcup I_{k} = [n]$ with each~$I_j$ nonempty, there is an edge~$H \in \mathcal{H}$ such that $|H \cap I_{j}| = 1$ for all~$1 \le j \le k$. 
\end{definition}

Note that a $2$-uniform hypergraph is just a graph in the usual sense, and the transversality property is equivalent to connectedness of the graph.  See~\cite[\S1]{BT09} for a brief survey of the literature surrounding this concept of hypergraph transversality, including translations to other combinatorial settings.

\begin{example}
For~$n=5$, consider the $3$-uniform hypergraph
\[
\mathcal{H}=\{\{1,2,3\},\{2,3,4\},\{3,4,5\},\{4,5,1\},\{5,1,2\}\}.
\]
It is simple to verify that $\mathcal{H}$ satisfies the transversality property. Moreover, there does not exist a $3$-uniform hypergraph on~$[5]$ with fewer than five edges that satisfies the transversality property.  Up to permutation (that is, the $S_5$-action on~$[5]$), $\mathcal{H}$ is the unique hypergraph satisfying the transversality property with the minimal number of edges for~$(k,n)=(3,5)$.
\end{example}

The next result answers the first of the two question stated above.

\begin{proposition}\label{prop:YdnT}
For a collection~$\cT \subseteq {[n] \choose d+1}$, we have a set-theoretic equality $Y_{d, n}^{\cT} = Y_{d, n}$ if and only if the transversality property holds for~$\cT$, viewed as a hypergraph. 
\end{proposition}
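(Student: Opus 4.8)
The plan is to reduce both implications to a single combinatorial reformulation and then dispatch each direction by an explicit construction. First I would reinterpret a partition $I_{1} \sqcup \cdots \sqcup I_{d+1} = [n]$ into nonempty parts as a surjective coloring $c \colon [n] \to [d+1]$, and observe that for $H \in \binom{[n]}{d+1}$ the condition $|H \cap I_{j}| = 1$ for all $j$ is equivalent to $c$ restricting to a bijection $H \to [d+1]$, i.e. to $H$ being \emph{rainbow}. Thus the transversality property for $\cT$ says exactly that every surjective coloring admits a rainbow edge in $\cT$. On the geometric side, since $Y_{d,n} \subseteq Y_{d,n}^{\cT}$ always holds, the desired equality is equivalent to the reverse inclusion $Y_{d,n}^{\cT} \subseteq Y_{d,n}$, i.e. to the assertion that no configuration $\mathbf{p} \in (\PP^{d})^{n}$ whose coordinate matrix has rank $d+1$ can have all the minors $\{m_{H}\}_{H \in \cT}$ vanishing.

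For the implication ``transversality fails $\Rightarrow Y_{d,n}^{\cT} \neq Y_{d,n}$'' I would argue by contraposition using a standard-basis configuration. Given a surjective coloring $c$ with no rainbow $\cT$-edge, define $\mathbf{p}_{c}$ by declaring the $i$-th homogeneous coordinate vector to be the standard basis vector $e_{c(i)} \in \Bbbk^{d+1}$. Surjectivity of $c$ forces all of $e_{1}, \ldots, e_{d+1}$ to appear, so the coordinate matrix has rank $d+1$ and $\mathbf{p}_{c} \notin Y_{d,n}$; on the other hand each $H \in \cT$ is non-rainbow, so two of its columns coincide, whence $m_{H}(\mathbf{p}_{c}) = 0$ and $\mathbf{p}_{c} \in Y_{d,n}^{\cT}$. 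Thus the two loci differ.

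For the converse, ``transversality holds $\Rightarrow$ equality,'' the key step---and the one I expect to be the main obstacle---is to manufacture, from an \emph{arbitrary} full-rank configuration, a partition that certifies independence. Precisely, I would prove that for any spanning family of nonzero vectors $v_{1}, \ldots, v_{n} \in \Bbbk^{d+1}$ there is a surjective coloring $c$ for which \emph{every} rainbow $(d+1)$-subset is a basis. The naive attempt---assigning each $i$ to some color $j$ where the $j$-th coordinate of $v_{i}$ in a fixed basis is nonzero---fails, because a matrix with nonzero diagonal entries need not be invertible. The fix is a flag construction: choose a basis $v_{b_{1}}, \ldots, v_{b_{d+1}}$ among the columns, form the complete flag $F_{j} = \langle v_{b_{1}}, \ldots, v_{b_{j}} \rangle$, and color each index by its level $c(i) = \min\{\, j : v_{i} \in F_{j} \,\}$. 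Then $c(b_{j}) = j$ gives surjectivity, while any rainbow set contains exactly one vector of each level, and these are automatically independent since the level-$j$ vector lies in $F_{j} \setminus F_{j-1}$ and so cannot be in the span of the lower-level vectors.

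Granting this, the converse follows cleanly: if $\mathbf{p}$ has rank $d+1$, apply the flag coloring to its columns and invoke the transversality property to obtain a rainbow edge $H \in \cT$; being rainbow, its columns form a basis, so $m_{H}(\mathbf{p}) \neq 0$ and $\mathbf{p} \notin Y_{d,n}^{\cT}$. Contrapositively, every point of $Y_{d,n}^{\cT}$ has rank at most $d$, giving $Y_{d,n}^{\cT} \subseteq Y_{d,n}$ and hence the equality. The only substantive work is the flag lemma; everything else is the bookkeeping of the coloring/rainbow dictionary set up at the start.
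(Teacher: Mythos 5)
Your proposal is correct and takes essentially the same approach as the paper: the forward direction uses the identical standard-basis configuration $i \mapsto e_{c(i)}$, and your flag coloring is precisely the coordinate-free form of the paper's converse argument, which uses $\GL_{d+1}$-invariance to put the matrix in row echelon form and partitions the columns by the row of their lowest nonzero entry (i.e.\ by their level in the standard coordinate flag). The two constructions coincide after a change of basis, so there is nothing substantive to add.
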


\begin{proof}
We may regard each point in~$(\PP^{d})^{n} = (\mathbb{A}^{d+1}\setminus\{\mathbf{0}\})^n/(\Bbbk^\times)^n$ as an equivalence class --- under the action of multiplication on the left by diagonal invertible matrices --- of~$(d+1) \times n$~matrices none of whose columns is the zero vector. Suppose that $Y_{d, n}^{\cT} = Y_{d, n}$. For any partition $I_{1} \sqcup \cdots \sqcup I_{d+1} = [n]$, consider the matrix $A \in (\PP^{d})^{n}$ whose $i$-th column is the $j$-th standard basis vector $e_{j}$ when $i \in I_{j}$. This matrix has full rank, so $A \notin Y_{d,n} = Y_{d,n}^{\cT}$, and hence there must be a nonzero minor $m_{H}(A) \ne 0$ with $H \in \cT$. By our construction of~$A$, a nonzero minor cannot have more than one column indexed by any~$I_{j}$, so we have $|H \cap I_{j}| = 1$ for all~$j$ and thus $\cT$ satisfies the transversality property. 

Conversely, suppose $\cT$ has the transversality property and let~$B$ be a matrix representing an arbitrary point of~$ (\PP^d)^n\setminus Y_{d, n}$, i.e., $B$ is a full rank $d+1$ matrix without zero columns.  If we can show that $B \notin Y_{d,n}^{\cT}$ then we will obtain $Y_{d, n}^{\cT} \subseteq Y_{d, n}$, and hence $Y_{d, n}^{\cT} = Y_{d, n}$ since the opposite containment is trivial.  First, note that both $Y_{d,n}$ and $Y_{d, n}^{\cT}$ are $\GL_{d+1}$-invariant subsets of~$(\PP^d)^n$, for the natural diagonal action, so we can assume without loss of generality that $B$ is in row echelon form.  For each~$1 \le j \le d+1$, let~$I_{j}$ be the set of columns such that the lowest nonzero entry is on the $j$-th row. Then $I_{1} \sqcup \cdots \sqcup I_{d+1}$ is a partition of~$[n]$, with~$I_{j} \ne \emptyset$ for each~$j$, since $\mathrm{rank}(B) = d+1$. By assumption on~$\cT$ there must exist $H \in \cT$ such that $|H \cap I_{j}| = 1$ for all~$j$.  But then clearly $m_{H}(B) \ne 0$, and so $B \notin Y_{d, n}^{\cT}$ as desired.
\end{proof}

By the preceding result, the most ``efficient'' ways to cut out the locus of degenerate point configurations correspond to the uniform hypergraphs~$\cT$ with the minimal number of edges that satisfy the transversality property.  We can derive a lower bound for this minimal number of edges as follows.  Consider a partition of~$[n]$ where $|I_{j}| = 1$ for $j \le d$ and $|I_{d+1}| = n-d$. The set of such partitions can be identified with~${[n] \choose n-d}$. Fix a hypergraph~$\cT$ with the transversality property. Consider the following incidence locus: 
\[Inc := \{(J, H) \in {[n] \choose n-d} \times \cT \;|\; |J \cap H| = 1\}.\]
We have two projections 
\[
	\xymatrix{& Inc \ar[ld]_{\pi_{1}} \ar[rd]^{\pi_{2}}\\
	{[n] \choose n-d} && \cT.}
\]
The transversality property of~$\cT$ implies that $\pi_{1}$ is surjective, so $|Inc| \ge {n \choose n-d} = {n \choose d}$. For any~$H \in \cT$, we have $|\pi_{2}^{-1}(H)| = d+1$. Thus, 
\[
	|\cT| = \frac{|Inc|}{d+1} \ge \frac{1}{d+1}{n \choose d}.
\]
This is not far from the best known lower bound~\cite{Ste75} (see also~\cite[\S1.4]{BT09}): 
\[|\cT| \ge \frac{2}{n-d+2}\binom{n}{d}.\]

We now return to the question of finding a smaller set of defining equations for~$V_{2, n}$.  Remarkably, the following result says that at least set-theoretically, the combinatorics of which determinantal equations can be dropped when defining $V_{2,n}$ is exactly the same as that of the space of degenerate point configurations in~$\PP^5$, namely~$Y_{5,n}$, analyzed above.

\begin{theorem}\label{thm:V2nT}
For a collection~$\cT \subseteq {[n] \choose 6}$, the equations $\{ \phi_{H}\}_{H \in \cT}$ define $V_{2, n}$ set-theoretically if and only if $\cT$ satisfies the transversality property. 
\end{theorem}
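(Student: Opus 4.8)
The plan is to connect the transversality property for $\cT$ to the transversality property for the associated hypergraph governing $Y_{5,n}$, and then leverage Proposition~\ref{prop:YdnT}. Recall that the defining polynomial $\phi$ for $V_{2,6}$ was obtained via the Veronese embedding $v\colon \PP^2 \hookrightarrow \PP^5$: six points lie on a conic precisely when their Veronese images lie on a hyperplane. More precisely, $\phi_H$ is exactly the $6\times 6$ determinant built from the Veronese coordinates of the six points indexed by $H$, so $\phi_H$ vanishes at $\mathbf{p}$ if and only if the minor $m_H$ vanishes at $v(\mathbf{p})$, where $v(\mathbf{p}) := (v(p_1),\ldots,v(p_n)) \in (\PP^5)^n$. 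Thus, letting $\mathcal{V} \subseteq (\PP^5)^n$ denote the image of the Veronese map applied coordinatewise, we have
\[
Z(\{\phi_H\}_{H\in\cT}) = v^{-1}\left(Y_{5,n}^{\cT} \cap \mathcal{V}\right),
\]
and in particular $\{\phi_H\}_{H\in\cT}$ cuts out $V_{2,n}$ set-theoretically if and only if $Y_{5,n}^{\cT} \cap \mathcal{V} = Y_{5,n} \cap \mathcal{V}$ set-theoretically (using $V_{2,n} = W_{2,n} = Z(\{\phi_H\}_{H\in\binom{[n]}{6}})$ from Theorem~\ref{thm:eqnV2n}, which is the full-$\cT$ case).

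The reverse direction is immediate: if $\cT$ satisfies the transversality property, then by Proposition~\ref{prop:YdnT} we have $Y_{5,n}^{\cT} = Y_{5,n}$ on all of $(\PP^5)^n$, so a fortiori their intersections with $\mathcal{V}$ agree, and hence $\{\phi_H\}_{H\in\cT}$ defines $V_{2,n}$. The substantive direction is the forward one: assuming $\{\phi_H\}_{H\in\cT}$ defines $V_{2,n}$ set-theoretically, I must show $\cT$ has the transversality property. First I would argue by contraposition. Suppose $\cT$ fails transversality, so there is a partition $I_1 \sqcup \cdots \sqcup I_6 = [n]$ into nonempty parts for which no $H \in \cT$ meets every part in exactly one element. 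I would then construct a configuration $\mathbf{p} \notin V_{2,n}$ that nonetheless satisfies all $\phi_H = 0$ for $H \in \cT$, contradicting the hypothesis.

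The key step — and the main obstacle — is this construction. In the $Y_{5,n}$ setting of Proposition~\ref{prop:YdnT}, the witnessing matrix simply placed standard basis vectors $e_j$ in the columns of part $I_j$; but here the columns are constrained to lie on the Veronese surface $v(\PP^2)$, so I cannot use arbitrary vectors in $\PP^5$. The resolution is to choose six points $q_1,\ldots,q_6 \in \PP^2$ whose Veronese images $v(q_1),\ldots,v(q_6)$ are linearly independent in $\PP^5$ (equivalently, $q_1,\ldots,q_6$ do not lie on a conic, so $\phi(q_1,\ldots,q_6) \ne 0$), and then set $p_i := q_j$ whenever $i \in I_j$. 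By construction the resulting configuration $\mathbf{p}$ uses only these six distinct points, arranged according to the partition. Since the six $q_j$ do not lie on a conic, $\mathbf{p} \notin V_{2,n}$. On the other hand, for any $H \in \cT$, the minor $\phi_H(\mathbf{p})$ is the determinant of a $6\times 6$ matrix whose columns are drawn from $v(q_1),\ldots,v(q_6)$; because $H$ fails to meet the partition transversally, either some part $I_j$ is missed by $H$ entirely (so some $v(q_j)$ is absent and the six columns span a proper subspace) or some part is hit at least twice (forcing a repeated column), and in either case the determinant vanishes. Hence $\phi_H(\mathbf{p}) = 0$ for all $H \in \cT$, giving the desired contradiction. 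The only delicate point is verifying that six points off a conic do yield a full-rank Veronese image — which is exactly the geometric content of $\phi \ne 0$ recorded in Proposition~\ref{prop:equationV26determined} — together with the combinatorial bookkeeping that a non-transversal $H$ always forces a rank drop; this mirrors the linear-algebra analysis in Proposition~\ref{prop:YdnT} and is routine once the substitution $p_i = q_{j(i)}$ is set up.
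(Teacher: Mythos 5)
Your proposal is correct and follows essentially the same route as the paper: transfer the problem to $Y_{5,n}^{\cT}$ via the coordinatewise Veronese embedding, invoke Proposition~\ref{prop:YdnT} for the ``if'' direction, and for the ``only if'' direction rerun the partition/repeated-columns construction with six linearly independent points of $v(\PP^2)$ (equivalently, six points of $\PP^2$ not on a conic) in place of the standard basis vectors. The only difference is that you phrase the converse as a contraposition rather than directly, which is immaterial.
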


\begin{proof}
Let~$V_{2, n}^{\cT} \subseteq (\PP^{2})^{n}$ be the subscheme defined by~$\{\phi_{H}\}_{H \in \cT}$. For the product of Veronese embeddings $v \colon (\PP^{2})^{n} \hookrightarrow (\PP^{5})^{n}$, we have $v(V_{2, n}^{\cT}) = \mathrm{im}\; v \cap Y_{5, n}^{\cT}$ (cf., the proof of Proposition~\ref{prop:equationV26determined}). 

If $\cT$ satisfies the transversality property, then by Proposition~\ref{prop:YdnT} we have $Y_{5, n}^{\cT} = Y_{5, n}^{\binom{[n]}{6}}$, so intersecting with~$\mathrm{im}\; v$ yields $V_{2, n}^{\cT} = V_{2, n}^{\binom{[n]}{6}}$.  But~$V_{2,n}^{\binom{[n]}{6}} = V_{2,n}$ by Theorem~\ref{thm:eqnV2n}, so~$V_{2, n}^{\cT} = V_{2, n}$.  Conversely, suppose $V_{2, n}^{\cT} = V_{2, n}$. In the first half of the proof of Proposition~\ref{prop:YdnT}, we used a matrix whose columns were repetitions of the $d+1$~standard basis vectors. The same argument works for repetitions of any $d+1$~linearly independent vectors. Because $v(\PP^{2}) \subseteq \PP^{5}$ is non-degenerate, we can find $6$~linearly independent vectors in~$v(\PP^{2})$. Thus the same proof works here to show that the transversality property holds for~$\cT$. 
\end{proof}

\begin{example}\label{ex:V27eqs}
There is only one equation for~$V_{2,6}$.  For~$V_{2,7}$, our construction in Theorem~\ref{thm:eqnV2n} uses $\binom{7}{6} = 7$ equations.  We can drop any one of these and the corresponding hypergraph still satisfies the transversality property, so by the preceding result any 6 of these 7 equations cut out $V_{2,7}$ set-theoretically. 
\end{example}

\begin{remark}
A reasonable expectation is that if the equality $V_{2, n}^{\cT} = V_{2, n}$ holds set-theoretically, then it also does scheme-theoretically.  We verified this using a computer algebra system in a few cases for small values of~$n$, but more evidence would be necessary to confidently state the conjecture. 
\end{remark}

\begin{remark}
The \emph{arithmetical rank} of a polarized projective variety~$X \subseteq \PP^{N}$ is the smallest number of equations needed to define $X$ set-theoretically.  The definition makes sense for varieties embedded in products of projective space as well.  Bruns and Schw\"anzl proved that the arithmetical rank of the ideal of $t$~minors of an~$m \times n$~generic matrix is exactly $mn - t^{2}+1$~\cite[Theorem 1]{BS90}.  In particular, Bruns gives an explicit construction of a set of defining equations by using a poset attached to the matrix~\cite[Corollary 2.2]{Bru89}.  We can apply his construction to the set of equations $\{\phi_I\}$  defining $V_{2, n} \subseteq (\PP^{2})^{n}$, obtaining a new set of equations, certain sums of the~$\phi_I$, that set-theoretically defines $V_{2, n} \subseteq (\PP^{2})^{n}$.  In this way we obtain an upper bound of~$6n - 35$ on the arithmetical rank of~$V_{2, n}$.  Contrary to the case of a generic matrix  this upper bound is not sharp.  For example, for~$n=7$ it gives $6\cdot7-35=7$ equations, whereas we saw in Example~\ref{ex:V27eqs} that $V_{2,7}$ can be defined set-theoretically by~$6$~equations.  Nonetheless, since this upper-bound is linear in~$n$, it is asymptotically much better than the one obtained by combining Theorem~\ref{thm:V2nT} with the combinatorial bounds in~\cite{BT09}.
\end{remark}


\section{The Veronese compactification in higher dimensions}

In this section we turn to the Veronese compactification~$V_{d,n} \subseteq (\PP^d)^n$ for~$d > 2$.  Our main tool for producing equations for this variety is the classical Gale transform, which allows us to draw from the~$d=2$~case studied in the previous section.  However, the Gale transform does not apply to degenerate point configurations, and this leads to a serious complication.  

The equations we produce cut out (conjecturally, though we prove some important special cases) a union of two irreducible loci: $V_{d,n}$ and the locus $Y_{d,n}$ of degenerate configurations, which made a brief appearance in~\S\ref{sec:smallereqs}.  These two components coincide only in a few small cases; in general, finding more equations to cut out exactly $V_{d,n}$ appears to be very challenging, and we note multiple reasons why.  

Since the Gale transform appears in various forms in the literature, and since we shall need some very specific properties of it that are difficult to find in the literature, we begin this section with a study of the Gale transform.  (For a generalization, historical background, and more scheme-theoretic investigation see~\cite{EP00}.)

\subsection{Gale duality}\label{sec:Gale}

We denote the set of~$a\times b$~matrices by~$M_{a\times b}$ and the subset of full rank matrices by~$M_{a\times b}^{full}$.  If~$a < b$, then for a matrix~$A\in M_{a\times b}$ and an index set~$I\in\binom{[b]}{a}$  we let~$m_I(A)$ denote the associated maximal minor of~$A$.  Throughout, assume~$2 \le d \le n-2$.  

\begin{definition}\label{def:affineGaletransform}
For~$A \in M^{full}_{(d+1) \times n}$, the set of \emph{affine Gale transforms} of~$A$ is
\[\widetilde{G}(A) := \{B\in M^{full}_{(n-d-1) \times n}~|~AB^{t} = 0\}.\]
\end{definition}

\begin{remark}\label{rem:GaleOrbits}
We shall use repeatedly the following elementary observations: 
\begin{enumerate}
\item $\GL_{n-d-1}$ acts transitively on~$\widetilde{G}(A)$, and
\item $\widetilde{G}(\cdot)$ is constant on~$\GL_{d+1}$-orbits. 
\end{enumerate}
\end{remark}

\begin{example}\label{ex:affineGale}
Assume the first $d+1$ columns of $A \in M^{full}_{(d+1) \times n}$ are linearly independent. Then the $\GL_{d+1}$-orbit of $A$ contains a matrix of the form $[\mathrm{Id}_{d+1} | \underline{A}]$ for some $\underline{A} \in M_{(d+1) \times (n-d-1)}$, where for a positive integer~$m$ we use~$\mathrm{Id}_m$ to denote the~$m\times m$~identity matrix. It is then straightforward to check that $[\underline{A}^{t} | - \mathrm{Id}_{n-d-1}] \in \widetilde{G}(A)$.
\end{example}

The Gale transform exhibits a compatibility between maximal minors, see Proposition~\ref{prop:Galetransformminor}, that we shall rely upon heavily when studying configurations on rational normal curves.  First, some notation:

\begin{definition}\label{def:S}
For~$I = \{i_1,\ldots,i_k\} \in \binom{[n]}{k}$, let 
\[S_I := \sum_{j=1}^{k}(i_{j}-j)\] be the minimum number of adjacent transpositions $(i, i+1)$ needed to obtain $I$ from~$[k] \subseteq[n]$.  
\end{definition}

The following notation is also helpful, though we only use it in the immediately following proposition and its proof.   For~$I \subseteq [n]$, let $I_{1} := I \cap [d+1]$ and $I_{2} := I \setminus I_{1}$, so $I = I_{1}\sqcup I_{2}$; let $J_{1} := [d+1] \setminus I_{1}$ and $J_{2} := I^{c} \setminus [d+1]$, so $I^{c} = J_{1} \sqcup J_{2}$.  If $A\in M_{a\times b}$ and $I\subseteq [b]$, $J\subseteq [a]$ satisfy $|I| = |J|$, then we let $m_{I}^{J}(A)$ be the minor of $A$ specified by the $I$-columns and $J$-rows of $A$; if this is a maximal minor, say $J=[a]$, we denote this simply by~$m_I(A)$. 

\begin{proposition}\label{prop:Galetransformminor}
Let~$A \in M_{(d+1) \times n}^{full}$, $B \in \widetilde{G}(A)$, and $I\in\binom{[n]}{d+1}$.  Then 
\[
	m_{I}(A) = (-1)^{S_{I}+(n-d-1)}\lambda m_{I^{c}}(B),
\]
where $\lambda \in \Bbbk^\times$ is a nonzero constant independent of~$I$. 
\end{proposition}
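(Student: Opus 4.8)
The plan is to avoid any genericity assumption and to argue characteristic-freely through exterior algebra, treating $A$ and $B$ as the outer maps of a short exact sequence. Since $A$ has full rank and $AB^{t}=0$, the image of $B^{t}\colon \Bbbk^{n-d-1}\to \Bbbk^{n}$ lands in $\ker(A\colon \Bbbk^{n}\to \Bbbk^{d+1})$; as $B$ also has full rank, both spaces have dimension $n-d-1$, so they coincide and I obtain the short exact sequence
\[
0\longrightarrow \Bbbk^{n-d-1}\xrightarrow{\ B^{t}\ }\Bbbk^{n}\xrightarrow{\ A\ }\Bbbk^{d+1}\longrightarrow 0 .
\]
(The more naive idea of stacking $A$ over $B$ into one $n\times n$ matrix and computing a single determinant is not available here, because in positive characteristic the row spaces of $A$ and $B$ need not be complementary.) The two families of maximal minors I want to compare are exactly the coordinates of the induced maps on top exterior powers: under the identification $\wedge^{d+1}\Bbbk^{d+1}\cong\Bbbk$ the map $\wedge^{d+1}A$ sends $e_{I}\mapsto m_{I}(A)$ for $I\in\binom{[n]}{d+1}$, while the generator $\eta:=(\wedge^{n-d-1}B^{t})(1)$ of the line $\wedge^{n-d-1}(\ker A)\subseteq \wedge^{n-d-1}\Bbbk^{n}$ expands as $\eta=\sum_{K\in\binom{[n]}{n-d-1}} m_{K}(B)\,e_{K}$.

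The heart of the argument is to compare these via the wedge pairing $\langle\cdot,\cdot\rangle\colon \wedge^{d+1}\Bbbk^{n}\times\wedge^{n-d-1}\Bbbk^{n}\to\wedge^{n}\Bbbk^{n}\cong\Bbbk$. I would first prove the key lemma that $\langle\alpha,\eta\rangle=0$ for every $\alpha\in\ker(\wedge^{d+1}A)$: reducing to decomposable $\alpha$, the vanishing $\wedge^{d+1}A(\alpha)=0$ forces some factor of $\alpha$ to lie in $\ker A$, and wedging that factor into $\eta\in\wedge^{n-d-1}(\ker A)$ lands in $\wedge^{n-d}(\ker A)=0$. Since both $\langle\cdot,\eta\rangle$ and $\wedge^{d+1}A$ are \emph{nonzero} linear functionals on $\wedge^{d+1}\Bbbk^{n}$ (using that $A$ and $B$ have full rank), and the first vanishes on the codimension-one kernel of the second, they must be proportional: there is a constant $\lambda\in\Bbbk^{\times}$, manifestly independent of $I$, with $\langle\alpha,\eta\rangle=\lambda\,(\wedge^{d+1}A)(\alpha)$ for all $\alpha$. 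Evaluating on $\alpha=e_{I}$ gives $\langle e_{I},\eta\rangle=\varepsilon(I)\,m_{I^{c}}(B)$, where $e_{I}\wedge e_{I^{c}}=\varepsilon(I)\,e_{[n]}$, while the right-hand side equals $\lambda\,m_{I}(A)$; hence $m_{I}(A)=\lambda^{-1}\varepsilon(I)\,m_{I^{c}}(B)$.

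It then remains to identify the sign $\varepsilon(I)$ with the combinatorial quantity $S_{I}$ of Definition~\ref{def:S}. Sorting $e_{i_{1}}\wedge\cdots\wedge e_{i_{d+1}}\wedge e_{j_{1}}\wedge\cdots\wedge e_{j_{n-d-1}}$ (with $I=\{i_{1}<\cdots\}$ and $I^{c}=\{j_{1}<\cdots\}$) into increasing order produces the sign $(-1)^{N}$, where $N$ counts pairs $(i,j)$ with $i\in I$, $j\in I^{c}$, $j<i$; a short count shows $N=\sum_{a}(i_{a}-a)=S_{I}$, so $\varepsilon(I)=(-1)^{S_{I}}$. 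This gives $m_{I}(A)=\lambda^{-1}(-1)^{S_{I}}m_{I^{c}}(B)$, and since the stray factor $(-1)^{n-d-1}$ between this and the asserted formula is independent of $I$, it is absorbed into the nonzero constant, yielding the statement as written. The main obstacle I anticipate is purely organizational: arranging the argument so the constant is \emph{visibly} independent of $I$ (which the kernel-comparison cleanly achieves) and pinning down $\varepsilon(I)=(-1)^{S_{I}}$ with the correct conventions. As a cross-check one could instead reduce to the normal forms of Example~\ref{ex:affineGale} using the $\GL_{d+1}$- and $\GL_{n-d-1}$-equivariance of Remark~\ref{rem:GaleOrbits} (both sides scale by global determinants under these actions), but handling an $A$ whose first $d+1$ columns are dependent reintroduces exactly the same permutation-sign bookkeeping.
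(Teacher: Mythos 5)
Your proof is correct, and it takes a genuinely different route from the paper's. The paper first reduces by continuity to a generic $A$ (all maximal minors nonzero), normalizes to the pair $[\mathrm{Id}_{d+1}\,|\,\underline{A}]$, $[\underline{A}^{t}\,|\,-\mathrm{Id}_{n-d-1}]$ of Example~\ref{ex:affineGale}, verifies the identity there by two cofactor expansions and explicit sign bookkeeping, and then passes to arbitrary $A=X[\mathrm{Id}_{d+1}|\underline{A}]$, $B=Y[\underline{A}^{t}|-\mathrm{Id}_{n-d-1}]$, obtaining the explicit constant $\lambda=\det(X)/\det(Y)$. Your argument instead works directly with the exact sequence $0\to\Bbbk^{n-d-1}\xrightarrow{B^{t}}\Bbbk^{n}\xrightarrow{A}\Bbbk^{d+1}\to 0$ and the perfect wedge pairing, deriving the proportionality of the two functionals from the fact that $\langle\cdot,\eta\rangle$ kills $\ker(\wedge^{d+1}A)$. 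This buys you a proof with no genericity or continuity step, with the independence of $\lambda$ from $I$ structurally evident rather than tracked through computations, and with all the combinatorics concentrated in the single identity $\varepsilon(I)=(-1)^{S_I}$ (which you compute correctly: the inversion count of the shuffle $(I,I^{c})$ is $\sum_a(i_a-a)=S_I$). What the paper's computation buys in exchange is the explicit value of $\lambda$. Two small points worth making airtight in a write-up: (i) your reduction of the key lemma to decomposables should note that $\ker(\wedge^{d+1}A)$ equals $\ker(A)\wedge\bigl(\wedge^{d}\Bbbk^{n}\bigr)$ (seen by choosing a splitting $\Bbbk^{n}\cong\ker(A)\oplus\Bbbk^{d+1}$), so it is indeed spanned by decomposable elements having a factor in $\ker(A)$; (ii) the constants $\lambda^{-1}$ versus $\lambda$ and the stray $(-1)^{n-d-1}$ are harmless exactly as you say, since the proposition only asserts existence of some nonzero constant independent of $I$.
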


\begin{proof}
By continuity it suffices to prove the statement for a general matrix $A$, so we may assume that all the minors of~$A$ are nonzero. Let us start by proving our identity in the special case $A = [\mathrm{Id}_{d+1}|\underline{A}]$ and $B = [\underline{A}^{t}|-\mathrm{Id}_{n-d-1}]$. 

The columns of~$\underline{A}$ are indexed by~$d+2, d+3, \ldots, n$. By the cofactor expansion, 
\[
	m_{I}(A) = (-1)^{S_{I_{1}}}m_{I_{2}}^{[d+1] \setminus I_{1}}
	(\underline{A}).
\]
Similarly, by the cofactor expansion from the last column,
\[
	m_{I^{c}}(B) = (-1)^{T_{J_{2}}+|J_{2}|}m_{J_{1}}^{J_{2}^{c}}
	(\underline{A}^{t}),
\]
where $T_{J_{2}}$ is the number of adjacent transpositions $(j,j+1)$ that we must apply to obtain~$J_{2}$ from $\{n-|J_{2}|+1, \ldots, n-1, n\}$, the rows of~$\underline{A}^t$ are indexed by $d+2, d+3, \ldots, n$, and the complement of~$J_2$ is taken relatively to this set of indices. Note that we have to multiply by~$(-1)^{|J_{2}|}$ because the second half of~$B$ is the negative of the identity matrix. Then 
\[
	m_{I^{c}}(B) = (-1)^{T_{J_{2}}+|J_{2}|}m_{J_{1}}^{J_{2}^{c}}
	(\underline{A}^{t})
	= (-1)^{T_{J_{2}}+|J_{2}|}m_{J_{2}^{c}}^{J_{1}}(\underline{A})
	= (-1)^{T_{J_{2}}+|J_{2}|}m_{I_{2}}^{[d+1]\setminus I_1}(\underline{A}).
\]
Thus 
\[
	m_{I}(A) = (-1)^{S_{I_{1}}+T_{J_{2}}+|J_{2}|}m_{I^{c}}(B).
\]
Now the result follows from the observations below:
\begin{enumerate}
\item If we denote by~$S'_{I_{2}}$ the number of adjacent transpositions that we have to apply to get~$I_{2}$ from $\{d+2, d+3, \ldots, d+|I_{2}|+1\}$, then $S'_{I_{2}} = T_{J_{2}}$. 
\item $|I_{1}|+|I_{2}| = d+1$. 
\item $S_{I} = S_{I_{1}}+S'_{I_{2}} + |I_{2}|(d+1-|I_{1}|) = S_{I_{1}}+S'_{I_{2}} + |I_{2}|^{2}$. 
\end{enumerate}
So 
\[
\begin{split}
	m_{I}(A) &= (-1)^{S_{I_{1}}+T_{J_{2}}+|J_{2}|}m_{I^{c}}(B)
	= (-1)^{S_{I_{1}} + S'_{I_{2}}+|J_{2}|}m_{I^{c}}(B)
	= (-1)^{S_{I}-|I_{2}|^{2}+n-d-1-|I_{2}|}m_{I^{c}}(B)\\
	&= (-1)^{S_{I}-|I_{2}|(|I_{2}|+1) +(n-d-1)}m_{I^{c}}(B)
	= (-1)^{S_{I}+(n-d-1)}m_{I^{c}}(B),
\end{split}
\]
because $|I_{2}|(|I_{2}|+1)$ is even. Observe that in this case we have~$\lambda=1$.

\par Now we prove the statement for general matrices $A$ and $B$ with nonzero minors.
There exist invertible square matrices $X,Y$ of appropriate sizes such that $A=X[\mathrm{Id}_{d+1}|\underline{A}]$ and $B=Y[\underline{A}^{t}|-\mathrm{Id}_{n-d-1}]$. Then we have that
\[
\begin{split}
m_I(A)&=m_I(X[\mathrm{Id}_{d+1}|\underline{A}])=\det(X)m_I([\mathrm{Id}_{d+1}|\underline{A}])\\
&=\det(X)(-1)^{S_I+n-d-1}m_{I^c}([\underline{A}^{t}|-\mathrm{Id}_{n-d-1}])\\
&=\frac{\det(X)}{\det(Y)}(-1)^{S_I+n-d-1}\det(Y)m_{I^c}([\underline{A}^{t}|-\mathrm{Id}_{n-d-1}])\\
&=\frac{\det(X)}{\det(Y)}(-1)^{S_I+n-d-1}m_{I^c}(B),
\end{split}
\]
which is the required statement where $\lambda=\det(X)/\det(Y)$.
\end{proof}

We next turn to defining a projective Gale transform.  Fix homogeneous coordinates on~$(\PP^d)^n$, and for any matrix~$A\in M_{(d+1)\times n}$ that does not have the zero vector as a column, let~$\PP A \in (\PP^d)^n$ denote the projective point configuration whose homogeneous coordinates are the columns of~$A$.  There is a crucial geometric property necessary for the projective Gale transform to be well-defined.

\begin{definition}
A point configuration $\mathbf{p} = (p_{1}, \ldots, p_{n}) \in (\PP^{d})^{n}$ is \emph{strongly non-degenerate} if for any hyperplane~$H \subseteq \PP^{d}$ there are at least two points of the configuration that do not lie on~$H$.
\end{definition}

\begin{remark}
If a point configuration $\mathbf{p} = (p_{1}, \ldots, p_{n}) \in(\PP^d)^n$ is automorphism-free (meaning that there is no non-trivial automorphism of~$\PP^d$ fixing all the~$p_i$), then $\mathbf{p}$ is strongly non degenerate (see also \cite[Proposition 3.2]{GG18}). Notice that the converse of this statement is false. As a counterexample, consider the two skew lines $Z(X_0,X_1),Z(X_2,X_3)$ in $\mathbb{P}^3$ and a configuration of $n\geq6$ distinct points on these two lines with at least three points on each line. This $n$-point configuration is strongly non-degenerate, but it is not automorphism-free. Non-trivial automorphisms of $\mathbb{P}^3$ fixing the $n$-points are
\begin{displaymath}
\left( \begin{array}{cccc}
\lambda&0&0&0\\
0&\lambda&0&0\\
0&0&\mu&0\\
0&0&0&\mu
\end{array} \right),
\end{displaymath}
for all choices of $\lambda,\mu\in\Bbbk\setminus\{0\}$, $\lambda\neq\mu$.
\end{remark}

\begin{definition}\label{def:Galetransform}
Let~$\mathbf{p} = (p_{1}, \ldots, p_{n}) \in (\PP^{d})^{n}$ be a strongly non-degenerate point configuration.  The set of \emph{Gale transforms} of~$\mathbf{p}$ is
\[\widetilde{G}(\mathbf{p}) := \{\PP B \in (\PP^{n-d-2})^n ~|~\PP A = \mathbf{p}\text{ and }B\in \widetilde{G}(A)\}.\] 
\end{definition}

For this definition to be well-defined, we need the following:

\begin{lemma}\label{lem:nonzerocol}
Let~$\mathbf{p} \in (\PP^d)^n$ be a strongly non-degenerate configuration.  Then (1) any matrix~$A\in M_{(d+1)\times n}$ satisfying $\PP A = \mathbf{p}$ is full rank, and (2) given such an~$A$, each column of any matrix~$B\in \widetilde{G}(A)$ is nonzero.
\end{lemma}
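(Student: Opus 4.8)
The plan is to translate both statements into linear algebra over $\Bbbk$ and then invoke strong non-degeneracy to exclude the degenerate possibilities. Throughout I identify a vector $c \in \Bbbk^{d+1}$ with the hyperplane $Z(c) \subseteq \PP^d$ it cuts out, so that $c^t a = 0$ means the point $[a]$ lies on $Z(c)$; write $a_1,\ldots,a_n$ for the columns of $A$, so that $\PP A = \mathbf{p}$ says $[a_j] = p_j$.

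For part (1), I first observe that $A$ fails to be full rank precisely when its columns span a proper subspace of $\Bbbk^{d+1}$, equivalently when there is a nonzero $c$ with $c^t A = 0$, i.e.\ a hyperplane $Z(c)$ containing every $p_j$. Strong non-degeneracy (indeed, ordinary non-degeneracy) forbids this, so $\rk A = d+1$, which is full rank since $n \ge d+2$.

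For part (2), I fix such a full-rank $A$ and an arbitrary $B \in \widetilde{G}(A)$. The key algebraic input is that the row space of $B$ equals the orthogonal complement in $\Bbbk^n$ of the row space of $A$ for the standard bilinear pairing: the relation $AB^t = 0$ says every row of $B$ is orthogonal to every row of $A$, giving one inclusion, and since $\rk B = n-d-1 = n - \rk A$ matches the dimension of the complement, the inclusion is an equality. (The identity $W^{\perp\perp} = W$ that I invoke is valid in every characteristic by a dimension count, so the paper's characteristic-free hypothesis is respected.) Now suppose, for contradiction, that the $i$-th column of $B$ vanishes. Then every row of $B$, and hence every vector in its row space, has vanishing $i$-th coordinate, so $e_i$ is orthogonal to $(\text{row space of }A)^{\perp}$; therefore $e_i \in (\text{row space of }A)^{\perp\perp} = \text{row space of }A$. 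Writing $e_i = c^t A$ for some $c \in \Bbbk^{d+1}$ then produces a hyperplane $Z(c)$ with $c^t a_j = 0$ for all $j \neq i$ but $c^t a_i = 1 \neq 0$; that is, exactly one point of the configuration avoids $Z(c)$, contradicting strong non-degeneracy. Hence no column of $B$ can be zero.

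I do not expect a genuine obstacle here, as the argument is short; the only step requiring care is the clean translation of a zero column of $B$ into the membership $e_i \in \text{row space}(A)$ via the orthogonal-complement description of $\widetilde{G}(A)$, after which strong non-degeneracy delivers the contradiction immediately.
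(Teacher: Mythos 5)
Your proof is correct, and it reaches the same geometric contradiction as the paper --- a zero column of the Gale transform produces a hyperplane avoided by exactly one point of $\mathbf{p}$, violating strong non-degeneracy --- but by a different mechanism. The paper reduces to a normal form: using that $\widetilde{G}(\cdot)$ is constant on $\GL_{d+1}$-orbits and that $\GL_{n-d-1}$ acts transitively on $\widetilde{G}(A)$ (Remark~\ref{rem:GaleOrbits}, Example~\ref{ex:affineGale}), it assumes $A=[\mathrm{Id}_{d+1}\,|\,\underline{A}]$ and $B=[\underline{A}^{t}\,|\,-\mathrm{Id}_{n-d-1}]$, after which a zero column of $B$ is literally a zero row of $\underline{A}$, and such a row exhibits a coordinate hyperplane containing every point but one. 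You instead keep $A$ and $B$ arbitrary and use linear duality: $AB^{t}=0$ together with the rank condition in Definition~\ref{def:affineGaletransform} identifies the row space of $B$ with $(\mathrm{rowspace}\,A)^{\perp}$, so a vanishing $i$-th column of $B$ gives $e_{i}\in(\mathrm{rowspace}\,A)^{\perp\perp}=\mathrm{rowspace}\,A$, i.e.\ a covector $c$ with $c^{t}a_{j}=\delta_{ij}$, and $Z(c)$ is the offending hyperplane. Your version buys coordinate-freeness: it needs no choice of normal form, no reordering of columns (which the paper's ``without loss of generality'' tacitly uses to make the first $d+1$ columns independent), and no appeal to the orbit structure, at the mild cost of invoking the double-perp identity --- which you correctly observe is characteristic-free since the standard pairing is nondegenerate. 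The paper's version is more explicitly computational and reuses the orbit machinery it has already set up and needs elsewhere.
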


\begin{proof}
Statement~(1) is obvious, since strongly non-degenerate implies non-degenerate, so we turn to~(2).  Without loss of generality we may assume that $A = [\mathrm{Id}_{d+1} | \underline{A}]$, and so as noted earlier every matrix in~$\widetilde{G}(A)$ is in the~$\GL_{n-d-1}$-orbit of the matrix $[\underline{A}^{t}|-\mathrm{Id}_{n-d-1}]$.  Thus it suffices to show that each column of this latter matrix is nonzero, or equivalently that each row of~$\underline{A}$ is nonzero.  If, say, the~$j$-th row of~$\underline{A}$ is zero, then the projectivization of the~$j$-th column of $A$ yields the only point of~$\mathbf{p}$ outside the hyperplane $H\subseteq \PP^d$ defined by the vanishing of the~$j$-th coordinate, contradicting the strongly non-degenerate hypothesis. 
\end{proof}

Now that we have established that the set of Gale transforms $\widetilde{G}(\mathbf{p}) \subseteq (\PP^d)^n$ is well-defined on strongly non-degenerate configurations, we can extend Remark~\ref{rem:GaleOrbits} to this projective setting.

\begin{proposition}\label{prop:3GaleProps}
Let~$\mathbf{p} \in (\PP^d)^n$ be a strongly non-degenerate configuration.  Then:
\begin{enumerate}
\item $\PGL_{n-d-1}$ acts transitively on~$\widetilde{G}(\mathbf{p})$,
\item $\widetilde{G}(\cdot)$ is constant on~$\PGL_{d+1}$-orbits, and
\item Each configuration in $\widetilde{G}(\mathbf{p})$ is strongly non-degenerate.
\end{enumerate}
\end{proposition}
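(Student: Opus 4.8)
The plan is to deduce all three statements from their affine counterparts in Remark~\ref{rem:GaleOrbits}, with the only genuinely new ingredient being a comparison of affine Gale transforms taken with respect to different matrix representatives of the same projective configuration. Fix a matrix $A \in M_{(d+1)\times n}$ with $\PP A = \mathbf{p}$; this is full rank by Lemma~\ref{lem:nonzerocol}(1). The key preliminary observation is a rescaling compatibility: if $D$ is an invertible diagonal matrix, so that $AD$ is another representative of $\mathbf{p}$, then $B \in \widetilde{G}(A)$ if and only if $BD^{-1} \in \widetilde{G}(AD)$, because $(AD)(BD^{-1})^{t} = AB^{t} = 0$; and since $D$ is diagonal we have $\PP(BD^{-1}) = \PP B$. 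Consequently the set $\{\PP B \mid B \in \widetilde{G}(A)\}$ is independent of the choice of representative $A$, and it equals $\widetilde{G}(\mathbf{p})$.

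With this in hand, part~(1) is immediate. For any $B, B' \in \widetilde{G}(A)$, Remark~\ref{rem:GaleOrbits}(1) produces $g \in \GL_{n-d-1}$ with $B' = gB$, so $\PP B' = \overline{g} \cdot \PP B$ with $\overline{g} \in \PGL_{n-d-1}$; conversely any $\overline{g}$ sends $\PP B$ back into $\widetilde{G}(\mathbf{p})$, since $A(gB)^{t} = AB^{t}g^{t} = 0$ shows $gB \in \widetilde{G}(A)$. Hence $\widetilde{G}(\mathbf{p})$ is exactly one $\PGL_{n-d-1}$-orbit. For part~(2), lift $\overline{h} \in \PGL_{d+1}$ to $h \in \GL_{d+1}$ and use $hA$ as a representative of $\overline{h}\cdot\mathbf{p}$. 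By Remark~\ref{rem:GaleOrbits}(2) we have $\widetilde{G}(hA) = \widetilde{G}(A)$, and the rescaling observation of the previous paragraph shows that this equality of affine transforms descends to the equality of projective transforms $\widetilde{G}(\overline{h}\cdot\mathbf{p}) = \widetilde{G}(\mathbf{p})$.

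The substantive step is part~(3). Fix $\PP B \in \widetilde{G}(\mathbf{p})$ coming from $B \in \widetilde{G}(A)$. The relation $AB^{t} = 0$, together with the fact that $A$ and $B$ are full rank of complementary sizes $d+1$ and $n-d-1$, shows that the row space of $B$ in $\Bbbk^{n}$ is precisely the orthogonal complement (for the standard bilinear form) of the row space of $A$. Suppose toward a contradiction that $\PP B$ is not strongly non-degenerate: then there is a nonzero functional $c \in (\Bbbk^{n-d-1})^{*}$ whose hyperplane contains all but at most one of the points $\PP b_{i}$, which means the row vector $cB = (c(b_{1}), \ldots, c(b_{n}))$ has at least $n-1$ vanishing entries. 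It cannot vanish identically, for $B$ has independent rows and $c \neq 0$; hence exactly one entry, say the $i_{0}$-th, is nonzero, and $cB$ is a nonzero multiple of the standard basis vector $e_{i_{0}}$. But $cB$ lies in the row space of $B$, i.e.\ in the orthogonal complement of the row space of $A$, so $e_{i_{0}}$ is orthogonal to every row of $A$; this forces the $i_{0}$-th column of $A$ to vanish, contradicting the fact that the columns of $A$ are homogeneous coordinate vectors of the points of $\mathbf{p}$.

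The main obstacle is setting up part~(3) correctly: one must translate the geometric failure of strong non-degeneracy into the linear-algebraic statement that $cB$ has at least $n-1$ zero entries, and then exploit the duality between the row spaces of $A$ and $B$ to convert this into a forbidden zero column of $A$. An alternative, more in keeping with the explicit computations elsewhere in this section, is to reduce to the normal form $A = [\mathrm{Id}_{d+1}\mid\underline{A}]$ and $B = [\underline{A}^{t}\mid -\mathrm{Id}_{n-d-1}]$ as in Example~\ref{ex:affineGale} and argue exactly as in the proof of Lemma~\ref{lem:nonzerocol}; I would present whichever reads more cleanly, but the orthogonality formulation has the advantage of being coordinate-free.
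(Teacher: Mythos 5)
Your proof is correct, and its architecture matches the paper's: both reduce the three statements to the affine facts in Remark~\ref{rem:GaleOrbits} by first checking that the set $\{\PP B \mid B \in \widetilde{G}(A)\}$ does not depend on the matrix representative $A$ of $\mathbf{p}$. The executions diverge in two minor but genuine ways. For the independence of representative and part~(1), the paper first normalizes $A$ to the form $[\mathrm{Id}_{d+1}\mid\underline{A}]$ (after assuming the first $d+1$ points are independent) and observes that rescaling the columns of $\underline{A}$ corresponds to rescaling the rows of $\underline{A}^{t}$, which is absorbed into the $\GL_{n-d-1}$-action; your diagonal-matrix identity $(AD)(BD^{-1})^{t}=AB^{t}$ accomplishes the same thing without the normalization or the harmless reordering of points. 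For part~(3), the paper reduces via~(1) to the single configuration $[\underline{A}^{t}\mid -\mathrm{Id}_{n-d-1}]$ and extracts the contradiction from vanishing minors of an $(n-d-1)\times(n-1)$ submatrix, concluding that some column of $\underline{A}$ is zero; your argument that $cB$ would be a nonzero multiple of a standard basis vector lying in the orthogonal complement of the row space of $A$, forcing a zero column of $A$, is the coordinate-free dual of the same computation and has the small advantages of not needing the reduction to a normal form and of making transparent why the contradiction is exactly the nonvanishing of the columns of $A$. Either version is complete; nothing is missing from yours.
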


\begin{proof}
Without loss of generality let us suppose that the first $d+1$~points of $\mathbf{p}$ are linearly independent.  Then the $\GL_{d+1}$-orbit of each matrix~$A$ satisfying $\PP A = \mathbf{p}$ contains a matrix of the form $[\mathrm{Id}_{d+1}|\underline{A}]$, and the torus~$(\Bbbk^*)^{n-d-1}$ acts transitively on the set of such matrices by rescaling the columns of~$\underline{A}$.  The set of affine Gale transforms $\widetilde{G}([\mathrm{Id}_{d+1}|\underline{A}])$ coincides with the~$\GL_{n-d-1}$-orbit of the matrix $[\underline{A}^t|-\mathrm{Id}_{n-d-1}]$.  Since the action of~$(\Bbbk^*)^{n-d-1}$ rescaling the rows of~$\underline{A}^t$ extends to the left-multiplication action of~$\GL_{n-d-1}$, we see that this set of affine Gale transforms is unaffected by rescaling the columns of~$\underline{A}$.  This implies assertion~(1).  Assertion~(2) follows immediately from the corresponding fact about the affine Gale transform.  To prove~(3) it suffices, by~(1), to show that a single configuration in~$\widetilde{G}(\mathbf{p})$ is strongly non-degenerate.  So suppose to the contrary that the $(n-d-1)\times n$ matrix $[\underline{A}^t|-\mathrm{Id}_{n-d-1}]$ has a submatrix of size $(n-d-1)\times (n-1)$ that is not full rank.  Clearly the missing column is then among the last block of~$n-d-1$ columns.  If it is the~$j$-th column in that negated identity matrix, then by considering the vanishing minors obtained by using the remaining $n-d-2$~columns of this identity block and each column of~$\underline{A}^t$ separately, we deduce that the~$j$-th row of $\underline{A}^t$ is zero.  But this means the~$j$-th column of~$\underline{A}$ is zero, which is impossible because $\PP A=\mathbf{p}$.
\end{proof}

The preceding result says that the Gale transform sends the projective equivalence class of a strongly non-degenerate configuration of~$n$~points in~$\PP^d$ to the projective equivalence class of a strongly non-degenerate configuration of~$n$~points in~$\PP^{n-d-2}$.  This is a precise formulation of the usual geometric statement of the Gale transform.  Moreover, it is clear from the definition that this is involutive, meaning that applying the Gale transform to the projective equivalence class $\widetilde{G}(\mathbf{p})$ results in the projective equivalence class of~$\mathbf{p}$.

A classical fact about the Gale transform, first observed by Goppa in the context of coding theory, is that any configuration of distinct points supported on a rational normal curve in~$\PP^d$ (which is necessarily a strongly non-degenerate configuration, since such points are in general linear position and $n \ge d+2$) is sent to a configuration of distinct points supported on a rational normal curve in~$\PP^{n-d-2}$.  In our language this is the following:

\begin{proposition}[\protect{\cite[Corollary 3.2]{EP00}}]\label{prop:GTandRNC}
If $\mathbf{p} \in U_{d, n} \subseteq (\PP^d)^n$, then $\widetilde{G}(\mathbf{p}) \subseteq U_{n-d-2, n}$. 
\end{proposition}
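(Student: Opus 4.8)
The plan is to reduce to an explicit model and then write down a Gale dual by hand. Since $\widetilde{G}(\cdot)$ is $\PGL_{d+1}$-invariant (Remark~\ref{rem:GaleOrbits}(2) and Proposition~\ref{prop:3GaleProps}(2)), since $\PGL_{n-d-1}$ acts transitively on $\widetilde{G}(\mathbf{p})$ by Proposition~\ref{prop:3GaleProps}(1), and since the locus $U_{n-d-2,n}$ is preserved by the $\PGL_{n-d-1}$-action, it suffices to exhibit a \emph{single} representative $\PP B \in \widetilde{G}(\mathbf{p})$ lying on a rational normal curve. Using the $\PGL_{d+1}$-action I would first move the rational normal curve through $\mathbf{p}$ to the standard one, and then, applying an automorphism of $\PP^1$ to push the $n$ distinct parameter values off $\infty$ (possible since $\Bbbk$ is infinite), reduce to the case $\mathbf{p} = \PP A$ where $A$ is the $(d+1)\times n$ Vandermonde matrix with $i$-th column $(1, t_i, t_i^2, \ldots, t_i^d)^{t}$ for distinct scalars $t_1,\ldots,t_n$. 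Note that $\mathbf{p}$ is automatically strongly non-degenerate, so $\widetilde{G}(\mathbf{p})$ is defined.

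Next I would exhibit the Gale dual explicitly via Lagrange/residue weights. Consider the $(n-d-1)\times n$ matrix $B$ with entries
\[
B_{m,i} = \frac{t_i^{m}}{\prod_{j \neq i}(t_i - t_j)}, \qquad 0 \le m \le n-d-2,\ \ 1 \le i \le n.
\]
Writing $B = VD$, where $V$ is the $(n-d-1)\times n$ Vandermonde matrix with $i$-th column $(1, t_i, \ldots, t_i^{n-d-2})^{t}$ and $D$ is the invertible diagonal matrix with $i$-th entry $1/\prod_{j\ne i}(t_i - t_j)$, it is immediate that $B$ has full rank $n-d-1$ and no zero column. The heart of the argument is to verify $B \in \widetilde{G}(A)$, i.e. $AB^{t}=0$: the $(k,m)$-entry of $AB^{t}$ is $\sum_{i=1}^{n} t_i^{k+m}/\prod_{j\ne i}(t_i - t_j)$, and since $0 \le k \le d$ and $0 \le m \le n-d-2$ we have $k+m \le n-2$, so this vanishes by the classical identity
\[
\sum_{i=1}^{n} \frac{t_i^{\ell}}{\prod_{j\neq i}(t_i - t_j)} = 0 \qquad \text{for } 0 \le \ell \le n-2,
\]
which follows from the partial-fraction expansion of $x^{\ell}/\prod_{j}(x - t_j)$: the sum of residues is the coefficient of $x^{-1}$ as $x\to\infty$, which vanishes in this degree range.

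Finally I would conclude. The diagonal factor $D$ only rescales the columns of $B$, so projectively $\PP B$ is the configuration whose $i$-th homogeneous coordinate vector is $(1, t_i, \ldots, t_i^{n-d-2})$; as the $t_i$ are distinct, these are $n$ distinct points on the standard rational normal curve in $\PP^{n-d-2}$, whence $\PP B \in U_{n-d-2,n}$. Transitivity of the $\PGL_{n-d-1}$-action then upgrades this to $\widetilde{G}(\mathbf{p}) \subseteq U_{n-d-2,n}$. I expect the only genuine subtlety to be the symmetric-function identity forcing $AB^{t}=0$; the clean conceptual point is that the parameters of the transformed curve are the \emph{same} $t_i$, provided one inserts the Lagrange weights $1/\prod_{j\ne i}(t_i - t_j)$, which are invisible projectively. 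An alternative route would compare maximal minors via Proposition~\ref{prop:Galetransformminor}, using $m_{I^c}(B) = \pm\lambda\, m_{I}(A)$ together with the factorization of the full Vandermonde determinant over complementary index sets, but this requires manipulating complementary Vandermonde products and seems more cumbersome than the direct construction above.
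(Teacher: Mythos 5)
Your proof is correct, but note that the paper does not actually prove this statement: it is quoted as a classical fact of Goppa with a pointer to \cite[Corollary 3.2]{EP00}, so there is no internal argument to compare against. What you supply is a complete, self-contained, and characteristic-free verification, and it is the standard one: normalize to the Vandermonde matrix $A$ with columns $(1,t_i,\ldots,t_i^d)^t$, exhibit the explicit Gale dual $B=VD$ with Lagrange weights, and check $AB^t=0$ from the vanishing of $\sum_i t_i^{\ell}/\prod_{j\ne i}(t_i-t_j)$ for $\ell\le n-2$. All the reductions you invoke are legitimate: points of $U_{d,n}$ are strongly non-degenerate (a hyperplane meets the curve in at most $d<n-1$ points), $\widetilde{G}(\cdot)$ is constant on $\PGL_{d+1}$-orbits, $U_{n-d-2,n}$ is $\PGL_{n-d-1}$-stable, and transitivity of $\PGL_{n-d-1}$ on $\widetilde{G}(\mathbf{p})$ lets a single witness suffice. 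The only presentational caveat is your justification of the key identity by ``the sum of residues is the coefficient of $x^{-1}$ as $x\to\infty$'': since the paper works over an algebraically closed field of arbitrary characteristic, it is cleaner to derive it purely algebraically, e.g.\ by writing $x^{\ell}=\sum_i t_i^{\ell}\prod_{j\ne i}(x-t_j)/\prod_{j\ne i}(t_i-t_j)$ (Lagrange interpolation, valid over any field as both sides are polynomials of degree $\le n-1$ agreeing at $n$ points) and comparing coefficients of $x^{n-1}$; the identity itself is characteristic-free either way. Your alternative route via Proposition~\ref{prop:Galetransformminor} and complementary Vandermonde products would also work but, as you say, is more cumbersome.
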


From this fact, together with continuity and involutivity of the Gale transform, we immediately obtain the following result (cf.,~\cite[\S6.2]{Gia13}):

\begin{corollary}\label{cor:GTVdn}
If $\mathbf{p} \in V_{d, n} \subseteq (\PP^d)^n$ is strongly non-degenerate, then $\widetilde{G}(\mathbf{p}) \subseteq V_{n-d-2, n}$. 
\end{corollary}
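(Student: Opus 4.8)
The plan is to deduce this from Proposition~\ref{prop:GTandRNC} by a continuity argument, exploiting that $U_{d,n}$ is dense in $V_{d,n}$. Since $\widetilde{G}(\mathbf{p})$ is a single $\PGL_{n-d-1}$-orbit by Proposition~\ref{prop:3GaleProps}(1), and $V_{n-d-2,n}$ is $\SL_{n-d-1}$-invariant (being the closure of the $\SL_{n-d-1}$-invariant locus $U_{n-d-2,n}$) and hence $\PGL_{n-d-1}$-invariant, it suffices to exhibit a single representative of $\widetilde{G}(\mathbf{p})$ lying in $V_{n-d-2,n}$. I would also record at the outset that $U_{d,n} \subseteq V_{d,n}$ consists of strongly non-degenerate configurations: distinct points on a rational normal curve are in general linear position, so since $n \ge d+2$ every hyperplane misses at least two of them.

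Next I would produce the Gale transform as a genuine morphism on a suitable open subset of the strongly non-degenerate locus of $V_{d,n}$ and invoke continuity. Permuting points (allowed since $V_{d,n}$ is $S_n$-invariant), one works on the open set where $p_1,\dots,p_{d+1}$ are linearly independent, and on the further open set where each $p_i$ has a prescribed nonzero homogeneous coordinate; there each point lifts to an affine representative regularly, giving a regular full-rank matrix $A = A(\mathbf{p}) \in M_{(d+1)\times n}$ with $\PP A = \mathbf{p}$, which after the $\GL_{d+1}$-normalization of Example~\ref{ex:affineGale} takes the form $[\mathrm{Id}_{d+1}\,|\,\underline{A}]$ with $\underline{A}$ regular in $\mathbf{p}$. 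Define
\[
\gamma(\mathbf{p}) := \PP\bigl[\underline{A}^{t}\,\big|\,-\mathrm{Id}_{n-d-1}\bigr].
\]
By Example~\ref{ex:affineGale} and Remark~\ref{rem:GaleOrbits}(2) this lies in $\widetilde{G}(\mathbf{p})$, and by Lemma~\ref{lem:nonzerocol}(2) strong non-degeneracy guarantees that no column vanishes, so $\gamma$ is a morphism into $(\PP^{n-d-2})^n$. These open sets cover the strongly non-degenerate locus of $V_{d,n}$, which is open in the irreducible variety $V_{d,n}$ (Lemma~\ref{lem:Vdnirreducible}) and contains the dense subset $U_{d,n}$; hence $U_{d,n}$ is dense in each such chart. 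For $\mathbf{p}' \in U_{d,n}$ we have $\gamma(\mathbf{p}') \in \widetilde{G}(\mathbf{p}') \subseteq U_{n-d-2,n} \subseteq V_{n-d-2,n}$ by Proposition~\ref{prop:GTandRNC}. Since $\gamma$ is continuous and $V_{n-d-2,n}$ is closed, $\gamma$ carries the whole chart into $V_{n-d-2,n}$; in particular $\gamma(\mathbf{p}) \in V_{n-d-2,n}$, which by the first paragraph gives $\widetilde{G}(\mathbf{p}) \subseteq V_{n-d-2,n}$.

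The genuinely delicate point is not the density or the closedness, which are immediate once the setup is in place, but the construction of $\gamma$ as an honest single-valued morphism landing in $(\PP^{n-d-2})^n$. Two kinds of ambiguity must be pinned down: the $\GL_{d+1}$-action used to normalize $A$, and the rescaling of the affine representatives of the individual points. The first is harmless because $\widetilde{G}(\cdot)$ is constant on $\GL_{d+1}$-orbits (Remark~\ref{rem:GaleOrbits}(2)); the second alters $\gamma(\mathbf{p})$ only within the orbit $\widetilde{G}(\mathbf{p})$, so it does not affect membership in the $\PGL_{n-d-1}$-invariant target $V_{n-d-2,n}$. The essential role of the hypothesis is strong non-degeneracy: it is exactly what forces, via Lemma~\ref{lem:nonzerocol}, both the full-rankness of $A$ and the nonvanishing of the columns of its transform, without which $\gamma(\mathbf{p})$ would fail to define a point configuration at all. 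I note finally that for this single implication only continuity together with Proposition~\ref{prop:GTandRNC} is needed; involutivity of the Gale transform would enter only when proving the reverse containment.
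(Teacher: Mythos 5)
Your proof is correct and follows the same route the paper takes: the paper derives this corollary from Proposition~\ref{prop:GTandRNC} ``together with continuity and involutivity of the Gale transform'' without giving further detail, and your argument is a careful implementation of exactly that continuity step (local charts making the Gale transform a morphism, density of $U_{d,n}$ in the irreducible $V_{d,n}$, closedness of $V_{n-d-2,n}$, and reduction to a single orbit representative via Proposition~\ref{prop:3GaleProps}). Your closing observation that involutivity is not actually needed for this one-directional containment is also accurate.
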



\subsection{The case $d\geq3$}

Since we already analyzed the Veronese compactification~$V_{d,n} \subseteq (\PP^d)^n$ for~$d=2$, we now assume~$d \ge 3$.  Recall that $V_{d,n} = (\PP^d)^n$ for all~$n \le d+3$, so the first non-trivial case is $V_{d,d+4}$.  

\begin{definition}
For each~$I\in \binom{[d+4]}{6}$, let~$\psi_I$ be the polynomial in the maximal minors of a matrix of coordinates for~$(\PP^d)^{d+4}$ obtained by taking the polynomial~$\phi_I$ from Definition~\ref{def:W2n}, which is a polynomial in the maximal minors of a matrix of coordinates for~$(\PP^2)^{d+4}$ (see Remark~\ref{rem:bracket}), and applying the transformation \[m_J \mapsto (-1)^{S_J + d + 1}m_{J^c}.\]  (The notation~$S_J$ was introduced in Definition \ref{def:S}.)  Let $W_{d, d+4} \subseteq (\PP^{d})^{d+4}$ be the closed subscheme defined by~$\{\psi_{I}\}$ for all~$I\in \binom{[d+4]}{6}$. 
\end{definition}

\begin{remark}
The number of polynomials $\psi_{I}$ is $\binom{d+4}{6}$, and their multi-degrees are permutations of~$(2,2,2,2,2,2,4,\ldots,4)$.
\end{remark}


\begin{example}\label{ex:equationsV37}
For~$d=3$, by Remark~\ref{rem:bracket} we have
\[
	\phi_{[6]} = |123||145||246||356| 
	- |124||135||236||456|,
\]
and so
\[
	\psi_{[6]} = |4567||2367||1357||1247| - |3567||2467||1457||1237|.
\]
One can similarly work out the remaining six polynomials defining $W_{3,7}$.  We show below (Corollary~\ref{cor:equationsV37V38V48}) that these seven polynomials cut out~$V_{3,7}$ set-theoretically, i.e.~$V_{3,7}=W_{3,7}$ as sets.  The same seven equations were obtained over a century ago by White from a different construction and for a different purpose~\cite{Whi15}.  \end{example}

Recall that for the~$d=2$~conic case studied in~\S\ref{ssec:equationsV2n}, we first defined $W_{2,6}$ as the vanishing of a determinant and then pulled this polynomial back along all the forgetful maps to define a subscheme~$W_{2,n}$ that we proved coincides with the Veronese compactification~$V_{2,n}$.  In this subsection we mimic that story for~$d\ge 3$ by pulling back the equations for~$W_{d,d+4}$ produced above to define a subscheme~$W_{d,n}$ for all~$n \ge d+4$.  However, we shall see below that there are significant twists to this story when~$d \ge 3$.  

\emph{Note}: For the remainder of this section we assume $d\ge 3$ and~$n \ge d+4$.

\begin{definition}
For each~$J \in\binom{[n]}{d+4}$, there is a projection morphism $\pi_{J} \colon (\PP^{d})^{n} \twoheadrightarrow (\PP^{d})^{d+4}$ sending $(p_{i})$ to~$(p_{i})_{i \in J}$. For each defining polynomial $\psi_{I}$ for~$W_{d, d+4}$, let~$\psi_{I, J} := \pi_{J}^{*}\psi_{I}$. Let~$W_{d, n} \subseteq (\PP^{d})^{n}$ be the closed subscheme defined by~$\{\psi_{I, J}\}$ for all possible~$I,J$. 
\end{definition}

The first basic fact about this scheme is that it contains the Veronese compactification:

\begin{lemma}\label{lem:VdninWdn}
We have $V_{d, n} \subseteq W_{d, n}$ as schemes.
\end{lemma}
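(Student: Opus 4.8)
The plan is to establish the inclusion $V_{d,n} \subseteq W_{d,n}$ by reducing to the defining case $n = d+4$ and then using the fact that the equations cutting out $W_{d,n}$ are pullbacks of those cutting out $W_{d,d+4}$ along forgetful maps. Since $V_{d,n}$ is reduced and irreducible (Lemma~\ref{lem:Vdnirreducible}), and $W_{d,n}$ is closed, it suffices to show the set-theoretic inclusion on a dense subset of $V_{d,n}$, namely the open locus $U_{d,n}$ of configurations of $n$ distinct points lying on an honest rational normal curve; the full inclusion then follows by taking Zariski closures. More precisely, because each $\psi_{I,J} = \pi_J^*\psi_I$, a point $\mathbf{p} \in (\PP^d)^n$ lies in $W_{d,n}$ if and only if $\pi_J(\mathbf{p}) \in W_{d,d+4}$ for every $J \in \binom{[n]}{d+4}$. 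Since the projection $\pi_J$ restricts to a surjection $V_{d,n} \twoheadrightarrow V_{d,d+4}$ (any $d+4$ of $n$ points on a rational normal curve again lie on one), the problem collapses to the single statement $V_{d,d+4} \subseteq W_{d,d+4}$.

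First I would therefore concentrate on the base case: showing that every $\mathbf{p} \in U_{d,d+4}$ satisfies $\psi_I(\mathbf{p}) = 0$ for all $I \in \binom{[d+4]}{6}$. Here is where the Gale transform enters. Take a strongly non-degenerate $\mathbf{p} \in U_{d,d+4}$ and choose a representing matrix $A$; by Proposition~\ref{prop:GTandRNC}, any Gale transform $\mathbf{q} = \PP B \in \widetilde{G}(\mathbf{p})$ lies in $U_{n-d-2,n} = U_{2,d+4}$, i.e.\ the $d+4$ points $\mathbf{q}$ lie on a conic in $\PP^2$. By Theorem~\ref{thm:eqnV2n} (applied with $n$ replaced by $d+4$), we have $\mathbf{q} \in V_{2,d+4} = W_{2,d+4}$, so $\phi_I(\mathbf{q}) = 0$ for all $I \in \binom{[d+4]}{6}$. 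Now I invoke Remark~\ref{rem:bracket}, which expresses $\phi_I$ as a polynomial in the maximal minors $m_J$ of a coordinate matrix for $(\PP^2)^{d+4}$, and Proposition~\ref{prop:Galetransformminor}, which relates the maximal minors of $B$ to those of $A$ via $m_{J^c}(B) = (-1)^{S_J + (n-d-1)}\lambda^{-1} m_J(A)$ for a fixed nonzero constant $\lambda$. Substituting this minor correspondence is exactly the transformation $m_J \mapsto (-1)^{S_J + d+1} m_{J^c}$ used to define $\psi_I$ from $\phi_I$. Thus $\phi_I(\mathbf{q}) = 0$ translates, up to an overall nonzero power of $\lambda$, into $\psi_I(\mathbf{p}) = 0$, giving $\mathbf{p} \in W_{d,d+4}$.

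The main obstacle I anticipate is bookkeeping the signs and the scalar $\lambda$ carefully enough to confirm that the minor substitution in Proposition~\ref{prop:Galetransformminor} matches the defining transformation of $\psi_I$ precisely, including the shift from ambient dimension $n-d-1$ to $d+1$ in the sign exponent (recall $n = d+4$ here, so $n-d-1 = 3$, and one must check the parity aligns with $d+1$). Since $\phi_I$ is homogeneous of a fixed degree in the brackets $m_J$, every monomial acquires the same total power of $\lambda$, so $\lambda$ factors out as a nonzero scalar and does not affect vanishing; the individual sign factors $(-1)^{S_J + \cdots}$ are built into the definition of $\psi_I$, so they should cancel by construction. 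I would verify this compatibility on the explicit $d=3$ instance of Example~\ref{ex:equationsV37}, where $\phi_{[6]}$ and $\psi_{[6]}$ are written out, as a sanity check. Once the base case $V_{d,d+4} \subseteq W_{d,d+4}$ is confirmed on the dense locus $U_{d,d+4}$, passing to closures and then pulling back along all $\pi_J$ finishes the general inclusion $V_{d,n} \subseteq W_{d,n}$; the only subtlety remaining is noting that this argument is set-theoretic a priori, but since it shows $V_{d,n}$ (a reduced scheme) is contained in the closed subscheme $W_{d,n}$, the inclusion holds as schemes because a closed immersion from a reduced scheme factors through any closed subscheme containing it set-theoretically.
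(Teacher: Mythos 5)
Your proposal is correct and follows essentially the same route as the paper: reduce to the set-theoretic containment on the dense locus $U_{d,n}$ (using reducedness of $V_{d,n}$), project to $U_{d,d+4}$, apply the Gale transform to land in $U_{2,d+4}$ where the $\phi_I$ vanish, and translate back via the minor correspondence of Proposition~\ref{prop:Galetransformminor} and the definition of $\psi_I$. The extra care you take with the signs and the scalar $\lambda$ (noting that $\phi_I$ is homogeneous in the brackets, so $\lambda$ factors out) is exactly the bookkeeping the paper leaves implicit.
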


\begin{proof}
Since~$V_{d,n}$ is reduced, it suffices to show this containment set-theoretically.  Let~$\mathbf{p} = (p_{i}) \in U_{d, n}$. For any $J \in\binom{[n]}{d+4}$ we have $\mathbf{p}_{J} := (p_{i})_{i \in J} \in U_{d, d+4}$.  By Proposition~\ref{prop:GTandRNC} we then have $\widetilde{G}(\mathbf{p}_{J}) \subseteq U_{2, d+4}$, so $\phi_I$ vanishes on~$\widetilde{G}(\mathbf{p}_{J})$ for each~$I \in \binom{[d+4]}{6}$.  It then follows from Proposition~\ref{prop:Galetransformminor} and the definition of~$\psi_I$ that $\psi_I$ vanishes at~$\mathbf{p}_J$.  Since this holds for all~$I$ and $J$, we have $U_{d, n} \subseteq W_{d, n}$ and hence, by continuity, $V_{d, n} \subseteq W_{d, n}$.
\end{proof}

Recall that $Y_{d, n} \subseteq (\PP^{d})^{n}$ is the space of degenerate point configurations (Definition~\ref{def:Ydn}). 

\begin{lemma}\label{lem:YdninWdn}
We have $Y_{d, n} \subseteq W_{d, n}$ as schemes.
\end{lemma}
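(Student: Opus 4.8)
The plan is to show that each defining polynomial $\psi_{I,J}$ of $W_{d,n}$ vanishes on $Y_{d,n}$. Since $Y_{d,n}$ is the locus where all $(d+1)\times(d+1)$ minors of the coordinate matrix vanish, and each $\psi_{I,J}$ is (via the pullback $\pi_J^*$) a polynomial in the maximal minors $m_{K^c}$ of a $(d+1)\times(d+4)$ submatrix, the key is to track how the degeneracy of $\mathbf p$ forces all the relevant minors appearing in $\psi_I$ to vanish.

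Let me think carefully about what $\psi_I$ looks like. By construction, $\psi_I$ is obtained from $\phi_I$ — a polynomial in the $3\times 3$ brackets $m_K$ ($K \in \binom{[d+4]}{3}$) of a $3\times(d+4)$ matrix — by applying the substitution $m_K \mapsto (-1)^{S_K+d+1}m_{K^c}$, where now $K^c \in \binom{[d+4]}{d+1}$ indexes a maximal minor of a $(d+1)\times(d+4)$ matrix. So $\psi_I$ is a polynomial in the maximal $(d+1)\times(d+1)$ minors of a $(d+1)\times(d+4)$ matrix of coordinates on $(\PP^d)^{d+4}$. Concretely, from Remark~\ref{rem:bracket}, $\phi_I$ is a difference of two monomials, each a product of four brackets involving all six points of $I$; hence $\psi_I$ is a difference of two monomials, each a product of four maximal $(d+1)\times(d+1)$ minors.

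Here is the crucial point. A configuration $\mathbf p \in Y_{d,n}$ is degenerate, meaning the $(d+1)\times n$ coordinate matrix has rank $\le d$. Consequently \emph{every} $(d+1)\times(d+1)$ minor of this matrix vanishes — in particular every maximal minor $m_{K^c}(\pi_J\mathbf p)$ appearing in any $\psi_{I,J}$ is zero. Since each monomial of $\psi_I$ is a product of (four) such maximal minors, each monomial vanishes, and therefore $\psi_{I,J}(\mathbf p)=0$. As this holds for all $I$ and $J$, we get $Y_{d,n}\subseteq W_{d,n}$ set-theoretically. Because $Y_{d,n}$ is reduced (it is the reduced determinantal variety of Definition~\ref{def:Ydn}), the set-theoretic containment upgrades to a scheme-theoretic one, exactly as in Lemma~\ref{lem:VdninWdn}.

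I expect the only genuine subtlety — and the main thing to verify carefully rather than assert — is that every monomial of $\psi_I$ really is a product of \emph{maximal} $(d+1)\times(d+1)$ minors, with no leftover factors or lower-size minors that might fail to vanish on a rank-$d$ matrix. This is guaranteed by the bracket structure of $\phi_I$ in Remark~\ref{rem:bracket} together with the substitution rule $m_K \mapsto (-1)^{S_K+d+1}m_{K^c}$, which sends each $3\times 3$ bracket to a single maximal minor of the $(d+1)\times(d+4)$ matrix; I would spell this out by noting that $\phi_I$ is multi-homogeneous of degree $(2,\ldots,2)$ so that every point index of $I$ appears in exactly two of the four brackets of each monomial, and the substitution preserves this monomial-in-minors form. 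Once this structural observation is in place, the vanishing is immediate and the proof is essentially a one-line consequence of the definition of $Y_{d,n}$.
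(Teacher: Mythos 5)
Your proof is correct, and its key structural observation---that each $\psi_{I,J}$ is, by Remark~\ref{rem:bracket} and the substitution $m_K \mapsto (-1)^{S_K+d+1}m_{K^c}$, a difference of two monomials each of which is a product of four maximal $(d+1)\times(d+1)$ minors of the coordinate matrix---is exactly the content of the paper's one-line proof. The only difference is how you draw the scheme-theoretic conclusion: you first establish set-theoretic containment and then upgrade it using reducedness of $Y_{d,n}$, whereas the paper argues directly at the level of ideals: since each monomial of $\psi_{I,J}$ has a maximal minor as a factor, each $\psi_{I,J}$ lies in the ideal generated by the maximal minors, which is by definition the ideal of $Y_{d,n}$; hence the ideal of $W_{d,n}$ is contained in that of $Y_{d,n}$ and the scheme-theoretic inclusion is immediate. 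The direct route is preferable here because your detour silently invokes the fact that the determinantal ideal of maximal minors of a generic matrix is radical (equivalently, that the scheme structure in Definition~\ref{def:Ydn} is reduced)---this is true and classical (cf.\ \cite{BV88}), but it is a nontrivial input that the ideal-theoretic argument does not need, and Definition~\ref{def:Ydn} does not itself assert reducedness. If you keep your version, you should cite the radicality of the determinantal ideal explicitly rather than treating it as part of the definition.
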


\begin{proof}
By definition the ideal for~$Y_{d,n}$ is generated by the maximal minors of a matrix of coordinates for~$(\PP^d)^n$, and by construction the polynomials~$\psi_{I, J}$ generating the ideal for~$W_{d,n}$ are all polynomials in these maximal minors.
\end{proof}

Thus we have a scheme-theoretic containment of the scheme-theoretic union: 
\[V_{d,n} \cup Y_{d,n} \subseteq W_{d,n}.\]  By putting together what we already know about the Gale transform, we can show that the opposite inclusion holds set-theoretically in the base case~$n=d+4$:

\begin{theorem}\label{thm:Wdd4inYdd4}
Set-theoretically, we have $W_{d, d+4} = V_{d, d+4} \cup Y_{d, d+4}$.
\end{theorem}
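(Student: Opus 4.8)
The plan is to prove the reverse inclusion $W_{d,d+4} \subseteq V_{d,d+4} \cup Y_{d,d+4}$ set-theoretically, since Lemmas~\ref{lem:VdninWdn} and~\ref{lem:YdninWdn} already give the forward inclusion. So I take a point configuration $\mathbf{p} \in W_{d,d+4}$ and must show that either $\mathbf{p}$ is degenerate (lies in $Y_{d,d+4}$) or $\mathbf{p} \in V_{d,d+4}$. The natural dichotomy is on non-degeneracy: if $\mathbf{p} \in Y_{d,d+4}$ we are immediately done, so the heart of the argument is to assume $\mathbf{p}$ is non-degenerate and deduce $\mathbf{p} \in V_{d,d+4}$. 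For a non-degenerate configuration of $n = d+4$ points, the Gale transform should be available and should land in $\PP^{n-d-2} = \PP^{2}$, which is exactly the conic case we understand completely.

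First I would handle the non-degenerate case under the cleaner hypothesis that $\mathbf{p}$ is \emph{strongly} non-degenerate, so that the projective Gale transform $\widetilde{G}(\mathbf{p}) \subseteq (\PP^{2})^{d+4}$ is well-defined by Lemma~\ref{lem:nonzerocol} and Proposition~\ref{prop:3GaleProps}. The key computation is to show that because $\mathbf{p}$ satisfies all the $\psi_I$, its Gale transform satisfies all the $\phi_I$. This is precisely the content of Proposition~\ref{prop:Galetransformminor}: the transformation $m_J \mapsto (-1)^{S_J + d+1} m_{J^c}$ used to define $\psi_I$ from $\phi_I$ is exactly the minor-compatibility of the Gale transform (up to the global nonzero scalar $\lambda$, which does not affect vanishing). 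Thus $\psi_I(\mathbf{p}) = 0$ for all $I$ forces $\phi_I(\widetilde{G}(\mathbf{p})) = 0$ for all $I$, so $\widetilde{G}(\mathbf{p}) \in W_{2,d+4} = V_{2,d+4}$ by Theorem~\ref{thm:eqnV2n}. Hence the Gale transform of $\mathbf{p}$ lies on a conic (a quasi-Veronese curve in $\PP^2$), and applying the inverse Gale transform together with involutivity and Corollary~\ref{cor:GTVdn} sends $\mathbf{p}$ back into $V_{d,d+4}$.

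The step I expect to be the main obstacle is dealing with configurations that are non-degenerate but \emph{not strongly} non-degenerate, i.e.\ where some hyperplane contains all but exactly one point. On such configurations the projective Gale transform is not defined (the offending row of $\underline{A}$ vanishes, producing a zero column downstream), so the clean argument above breaks down and one cannot directly invoke the conic case. I would address this by a limiting or degeneration argument: such a configuration lies in the closure of strongly non-degenerate configurations inside $W_{d,d+4}$, and since $W_{d,d+4}$ is closed and $V_{d,d+4} \cup Y_{d,d+4}$ is closed, one can try to realize the borderline point as a limit of points already known to lie in $V_{d,d+4}$, or alternatively argue directly that a configuration with $d+3$ points on a hyperplane and one point off it automatically lies on a quasi-Veronese curve (a rational normal curve in the hyperplane together with a line through the extra point, say), placing it in $V_{d,d+4}$ via Proposition~\ref{degeneratepointconfigonquasiVeronese}. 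Verifying that the relevant such configurations genuinely satisfy the $\psi_I$ and do lie on quasi-Veronese curves, and that no further pathological boundary cases escape this analysis, is where the careful bookkeeping will be required.
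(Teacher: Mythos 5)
Your reduction to the reverse inclusion and your treatment of the degenerate and strongly non-degenerate cases match the paper's proof exactly. The gap is in the third case, configurations that are non-degenerate but not strongly non-degenerate, and neither of your two suggestions closes it. The limiting argument is circular: to realize such a $\mathbf{p}$ as a limit of strongly non-degenerate points \emph{of $W_{d,d+4}$} you would need to know that this borderline locus of $W_{d,d+4}$ lies in the closure of its strongly non-degenerate part, and ruling out an extra irreducible component of $W_{d,d+4}$ supported on the borderline locus is essentially the content of the theorem. The direct argument rests on a false claim: $d+3$ points spanning a hyperplane $H\cong\PP^{d-1}$ together with one point off $H$ do \emph{not} automatically lie on a quasi-Veronese curve. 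A degree-$d$ quasi-Veronese curve through them must contain a subcurve of degree $d-1$ inside $H$ passing through the $d+3$ points, and $d+3=(d-1)+4$ points of $\PP^{d-1}$ lying on such a curve is a nontrivial closed condition (for $d=3$, six of the seven points would essentially have to lie on a plane conic). A generic configuration of this shape lies in neither $V_{d,d+4}$ nor $Y_{d,d+4}$, so it must violate some $\psi_I$; your argument for this case never uses the hypothesis $\mathbf{p}\in W_{d,d+4}$, so it cannot be correct.

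The missing idea is the paper's reduction: normalize so that $A=[\mathrm{Id}_{d+1}\,|\,\underline{A}]$ with exactly the first $e\ge 1$ rows of $\underline{A}$ zero (note there may be several isolated points, not just one). The affine Gale transform $B=[\underline{A}^{t}\,|\,-\mathrm{Id}_{3}]$ then has exactly $e$ zero columns; deleting them gives a configuration $\mathbf{q}\in(\PP^{2})^{d+4-e}$ which, by Proposition~\ref{prop:Galetransformminor} and the vanishing of all the $\psi_I$ at $\mathbf{p}$, lies on a conic, and which is a Gale transform of the strongly non-degenerate configuration $\mathbf{r}\in(\PP^{d-e})^{d+4-e}$ obtained by deleting the first $e$ rows and columns of $A$. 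Corollary~\ref{cor:GTVdn} and involutivity then give $\mathbf{r}\in V_{d-e,d+4-e}$, and one concludes by attaching $e$ lines from the isolated points to a quasi-Veronese curve of degree $d-e$ through $\mathbf{r}$, producing a degree-$d$ quasi-Veronese curve through all of $\mathbf{p}$. So the hyperplane-plus-lines geometry you sketch is the right final picture, but the curve inside the hyperplane must be extracted from the equations via this reduced Gale transform rather than assumed to exist.
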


\begin{proof}
Fix~$\mathbf{p} \in W_{d, d+4}$.  If~$\mathbf{p}$ is degenerate, then $\mathbf{p} \in Y_{d, d+4}$ and we are done. If~$\mathbf{p}$ is strongly non-degenerate, then Proposition~\ref{prop:Galetransformminor} and the definition of~$W_{d, d+4}$ implies that each Gale transform $\widetilde{G}(\mathbf{p})$ lies in~$V_{2, d+4}$, and each Gale transform is strongly non-degenerate by Proposition~\ref{prop:3GaleProps}, so $\widetilde{G}(\widetilde{G}(\mathbf{p})) \subseteq V_{d, d+4}$ by Corollary~\ref{cor:GTVdn}; since~$\mathbf{p} \in \widetilde{G}(\widetilde{G}(\mathbf{p}))$, we are done.  So we can assume that $\mathbf{p}$ is non-degenerate but not strongly non-degenerate. 

Let~$A$ be a matrix associated to~$\mathbf{p}$, i.e., $\PP A = \mathbf{p}$.  Since all loci in question are $\PGL_{d+1}$-invariant, the fact that the points in~$\mathbf{p}$ span $\PP^d$ yet there is a hyperplane $H \subseteq \PP^d$ containing all but one of the points means we can assume without loss of generality that $A = [\mathrm{Id}_{d+1} | \underline{A}]$ where the first $e \ge 1$ rows of~$\underline{A}$, and no others, are zero.  Then an affine Gale transform of~$A$ is given by~$B = [\underline{A}^{t}|-\mathrm{Id}_{3}]$, a $3\times (d+4)$ matrix whose zero columns are precisely the first $e$ columns. Let $B'$ be the $3 \times (d+4-e)$ matrix obtained by removing these zero columns, and let $\mathbf{q} = \PP B' \in (\PP^2)^{d+4-e}$ be the associated point configuration.  Since $\mathbf{p}\in W_{d,d+4}$, we know all of the $\psi_{I,J}$ vanish at $A$, so by Proposition~\ref{prop:Galetransformminor} the columns of~$B$, and hence of~$B'$, are vectors lying on the affine cone over a plane conic.  Thus after projectivization we obtain  $\mathbf{q}\in V_{2,d+4-e}$.  We claim that $\mathbf{q}$ is strongly non-degenerate.  Indeed, if we remove the first $e$ rows and columns of~$A$ then the associated configuration $\mathbf{r} \in (\PP^{d-e})^{d+4-e}$ is clearly strongly non-degenerate and has $\mathbf{q}$ as a Gale transform, $\mathbf{q}\in \widetilde{G}(\mathbf{r})$, so the claim follows from Proposition~\ref{prop:3GaleProps}(3).  

Since $\mathbf{q}\in V_{2,d+4-e}$ is strongly non-degenerate, Corollary~\ref{cor:GTVdn} tells us that $\widetilde{G}(\mathbf{q}) \subseteq V_{d-e, d-e+4}$, and involutivity of the Gale transform implies $\mathbf{r}\in \widetilde{G}(\mathbf{q})$, so we see that $\mathbf{r} \in V_{d-e,d-e+4}$.  Thus, there is a quasi-Veronese curve~$C$ of degree~$d-e$ in the~$\PP^{d-e} \subseteq \PP^d$ defined by the vanishing of the first $e$ coordinates that passes through the points $p_{e+1}, p_{e+2}, \ldots, p_{d+4}$. Let $C'$ be the union of~$C$ and the $e$ lines obtained by connecting each $p_i$, for $1 \le i \le e$, to any point of~$C$.  Then $C'$ is a degree~$d$ quasi-Veronese curve in~$\PP^d$ because the points $p_1,\ldots,p_e$ are the standard coordinate points outside of the linear subspace $\mathbb{P}^{d-e}$. Since $C'$ passes through all $d+4$ points of~$\mathbf{p}$, we have that $\mathbf{p} \in V_{d, d+4}$ as desired.
\end{proof}

Before turning to the question of whether the preceding proposition extends beyond~$n=d+4$, it is convenient to introduce the following geometric construction:

\begin{definition}
For $\mathbf{p}\in (\PP^d)^n$, let the \emph{Veronese envelope} $E_{\mathbf{p}} \subseteq \PP^{d}$ be the union of all quasi-Veronese curves passing through all points of~$\mathbf{p}$. 
\end{definition}

Note that the order of the points does not affect the Veronese envelope, and $E_{\mathbf{p}} \ne \emptyset$ if and only if $\mathbf{p}\in V_{d,n}$.  Moreover, for any subset~$I \subseteq [n]$ and corresponding projection map $\pi_I \colon (\PP^d)^n \rightarrow (\PP^d)^{|I|}$, we have $E_{\mathbf{p}} \subseteq E_{\pi_I(\mathbf{p})}$.

We will need the following result shortly for an inductive argument:

\begin{lemma}\label{lem:envelopeinduction}
Suppose $\mathbf{p} \in W_{d, n} \setminus Y_{d,n}$ and $\pi_{[n-1]}(\mathbf{p}) \in V_{d, n-1}$.  If the containment
\begin{equation}\label{eqn:equalityofenvelopes}
	E_{\pi_{[n-1]}(\mathbf{p})} \subseteq \bigcap_{I}
	E_{\pi_I(\mathbf{p})}
\end{equation}
is an equality, where the intersection is over $I \in \binom{[n-1]}{d+3}$ such that $\pi_I(\mathbf{p}) \notin Y_{d,d+3}$, then $\mathbf{p} \in V_{d, n}$.
\end{lemma}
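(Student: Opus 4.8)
The goal is to show that the point $p_n$ lies on some quasi-Veronese curve passing through all of $p_1,\ldots,p_{n-1}$, so that $\mathbf{p}\in V_{d,n}$ via Proposition~\ref{degeneratepointconfigonquasiVeronese}. Since $\pi_{[n-1]}(\mathbf{p})\in V_{d,n-1}$, the envelope $E_{\pi_{[n-1]}(\mathbf{p})}$ is nonempty, and by Proposition~\ref{degeneratepointconfigonquasiVeronese} the configuration $\mathbf{p}$ lies in $V_{d,n}$ exactly when $p_n\in E_{\pi_{[n-1]}(\mathbf{p})}$. So the entire statement reduces to proving the single containment $p_n\in E_{\pi_{[n-1]}(\mathbf{p})}$.

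**Key steps.**
First I would exploit the hypothesized equality of envelopes: it suffices to show that $p_n$ lies in $E_{\pi_I(\mathbf{p})}$ for every $I\in\binom{[n-1]}{d+3}$ with $\pi_I(\mathbf{p})\notin Y_{d,d+3}$, since the intersection of these equals $E_{\pi_{[n-1]}(\mathbf{p})}$ by assumption. Fix such an $I$ and set $J:=I\cup\{n\}\in\binom{[n]}{d+4}$. The crucial point is that $\pi_J(\mathbf{p})$ satisfies all the defining equations $\psi_{I',J}$ of $W_{d,d+4}$ (pulled back), because $\mathbf{p}\in W_{d,n}$; hence $\pi_J(\mathbf{p})\in W_{d,d+4}$. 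By Theorem~\ref{thm:Wdd4inYdd4}, set-theoretically $W_{d,d+4}=V_{d,d+4}\cup Y_{d,d+4}$, so $\pi_J(\mathbf{p})$ is either degenerate or lies in $V_{d,d+4}$. I would then argue that $\pi_J(\mathbf{p})\notin Y_{d,d+4}$: the $d+3$ points indexed by $I$ are already non-degenerate by the hypothesis $\pi_I(\mathbf{p})\notin Y_{d,d+3}$, so the larger configuration $\pi_J(\mathbf{p})$ cannot be degenerate. Therefore $\pi_J(\mathbf{p})\in V_{d,d+4}$, which by Proposition~\ref{degeneratepointconfigonquasiVeronese} means the $d+4$ points $(p_i)_{i\in J}$ all lie on a common quasi-Veronese curve; in particular $p_n$ lies on a quasi-Veronese curve through the points indexed by $I$, giving $p_n\in E_{\pi_I(\mathbf{p})}$.

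**Assembling the conclusion.**
Running the previous step over all admissible $I$ yields $p_n\in\bigcap_I E_{\pi_I(\mathbf{p})}$, which by the equality~\eqref{eqn:equalityofenvelopes} equals $E_{\pi_{[n-1]}(\mathbf{p})}$. Thus $p_n$ lies on a quasi-Veronese curve passing through all of $p_1,\ldots,p_{n-1}$, and one more application of Proposition~\ref{degeneratepointconfigonquasiVeronese} gives $\mathbf{p}\in V_{d,n}$.

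**The main obstacle.**
The step I expect to require the most care is verifying that $\pi_J(\mathbf{p})\notin Y_{d,d+4}$, i.e.\ that adjoining the single point $p_n$ to a non-degenerate $(d+3)$-point configuration keeps it non-degenerate. This is essentially immediate—adding points can only increase the span—so the genuinely substantive input is really Theorem~\ref{thm:Wdd4inYdd4} (the base case $n=d+4$) combined with the envelope-equality hypothesis; the lemma is precisely the inductive engine that propagates the base case, and its delicacy lies in correctly tracking which subconfigurations avoid $Y_{d,d+3}$ so that the envelope intersection is controlled. One should double-check that the set of admissible $I$ is nonempty—otherwise the right-hand intersection would be vacuous—but this follows from $\pi_{[n-1]}(\mathbf{p})\in V_{d,n-1}$ being non-degenerate together with $\mathbf{p}\notin Y_{d,n}$.
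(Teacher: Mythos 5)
Your proposal is correct and follows essentially the same route as the paper's proof: for each admissible $I$ you adjoin $n$, invoke $\pi_{I\cup\{n\}}(\mathbf{p})\in W_{d,d+4}=V_{d,d+4}\cup Y_{d,d+4}$ (Theorem~\ref{thm:Wdd4inYdd4}), rule out the degenerate component, deduce $p_n\in E_{\pi_I(\mathbf{p})}$, and conclude via the hypothesized equality of envelopes. The only quibble is that non-degeneracy of $\pi_{[n-1]}(\mathbf{p})$ does not follow from the stated hypotheses, but this does not matter: if no admissible $I$ exists the equality hypothesis trivializes the claim.
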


\begin{proof}
Write $\mathbf{q} = \pi_I(\mathbf{p})$ for a subset $I\in\binom{[n-1]}{d+3}$ such that this projection is non-degenerate, and let~$\mathbf{q}' = \pi_{I\sqcup \{n\}}(\mathbf{p})$. Since $\mathbf{p} \in W_{d, n}$, by definition we have $\mathbf{q}' \in W_{d, d+4}$, and so~$\mathbf{q}' \in V_{d, d+4} \cup Y_{d, d+4}$ by Theorem~\ref{thm:Wdd4inYdd4}.  Since $\mathbf{q}$ is non-degenerate, so must be $\mathbf{q}'$, and so~$\mathbf{q}'\in V_{d,d+4}$.  In particular, there is a quasi-Veronese curve through $p_n$ and all points of~$\mathbf{q}$, so $p_{n} \in E_{\mathbf{q}}$.  By letting the index set~$I$ vary, we deduce that $p_n$ is in the right side, and hence also the left side, of the hypothesized equality~\eqref{eqn:equalityofenvelopes}.  Thus there is a quasi-Veronese curve through~$p_n$ and all~$p_i$ for~$1 \le i \le n-1$, or in other words, $\mathbf{p}\in V_{d,n}$.
\end{proof}

\begin{theorem}\label{thm:W3ninV3n}
Set-theoretically, we have $W_{3, n} = V_{3, n} \cup Y_{3, n}$.
\end{theorem}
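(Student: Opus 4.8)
The plan is to prove the nontrivial inclusion $W_{3,n} \subseteq V_{3,n} \cup Y_{3,n}$ set-theoretically by induction on $n$, with the base case $n = d+4 = 7$ already supplied by Theorem~\ref{thm:Wdd4inYdd4}. So fix $n \ge 8$, assume the statement holds for $n-1$, and take a point $\mathbf{p} \in W_{3,n}$. If $\mathbf{p} \in Y_{3,n}$ we are done, so assume $\mathbf{p}$ is non-degenerate. The forgetful image $\pi_{[n-1]}(\mathbf{p})$ lies in $W_{3,n-1}$ (since every $\psi_{I,J}$ with $J \subseteq [n-1]$ still vanishes), so by the inductive hypothesis $\pi_{[n-1]}(\mathbf{p}) \in V_{3,n-1} \cup Y_{3,n-1}$. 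One should first dispose of the case $\pi_{[n-1]}(\mathbf{p}) \in Y_{3,n-1}$: here the first $n-1$ points span a hyperplane $H \cong \PP^2$, and since $\mathbf{p}$ itself is non-degenerate, $p_n \notin H$; I would then argue directly, as in the proof of Theorem~\ref{thm:Wdd4inYdd4}, that $(p_1,\ldots,p_{n-1})$ lies on a conic in $H$ (using the vanishing of the $\psi_{I,J}$ to control the planar sub-configuration) and cap this off by a line through $p_n$ to build a quasi-Veronese curve, giving $\mathbf{p} \in V_{3,n}$.

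The main case is $\pi_{[n-1]}(\mathbf{p}) \in V_{3,n-1}$, and here the goal is to verify the hypothesis of Lemma~\ref{lem:envelopeinduction}, namely that the containment
\[
	E_{\pi_{[n-1]}(\mathbf{p})} \subseteq \bigcap_{I} E_{\pi_I(\mathbf{p})}
\]
is an equality, the intersection running over $I \in \binom{[n-1]}{d+3} = \binom{[n-1]}{6}$ with $\pi_I(\mathbf{p})$ non-degenerate. Granting this equality, Lemma~\ref{lem:envelopeinduction} immediately yields $\mathbf{p} \in V_{3,n}$ and closes the induction. So the entire substance of the proof reduces to a statement about Veronese envelopes in $\PP^3$: the quasi-Veronese curves through all of $p_1,\ldots,p_{n-1}$ are already cut out by demanding passage through suitable $6$-point sub-configurations. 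I would establish this by analyzing, via Artin's description (Lemma~\ref{...}), the possible quasi-Veronese curves in $\PP^3$ shown in Figure~\ref{fig:quasiVeronese}: a twisted cubic, a line-plus-conic, a chain of three lines, or three concurrent lines. Since a non-degenerate configuration of six general-enough points already pins down a unique such curve (by Castelnuovo's Lemma in the smooth case, and by a component-by-component degeneration analysis otherwise), the envelope $E_{\pi_I(\mathbf{p})}$ for a single well-chosen $I$ is typically no larger than $E_{\pi_{[n-1]}(\mathbf{p})}$, and intersecting over all such $I$ forces equality.

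The hard part will be the envelope equality, precisely because it requires controlling the degenerate quasi-Veronese curves: when the $n-1$ points are distributed across reducible limit curves, a single $6$-point subset need not determine the whole curve, and one must argue that ranging over all valid index sets $I$ recovers every component. The delicate subcase is when points cluster on a low-degree component (say several points on one line of a chain), so that many sub-configurations become degenerate and are excluded from the intersection; I would handle this by a careful combinatorial accounting showing that enough non-degenerate $6$-subsets survive to detect each irreducible component of the envelope. This is exactly where the restriction to $d=3$ is essential, since the finite classification of quasi-Veronese curves in $\PP^3$ makes the case analysis tractable; for general $d$ the analogous envelope statement is what obstructs proving the conjectured equality $W_{d,n} = V_{d,n} \cup Y_{d,n}$, consistent with the paper's remark that equality is only established for $d=3$ or $n=d+4$.
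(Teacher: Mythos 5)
Your overall strategy coincides with the paper's: induct on $n$ with base case Theorem~\ref{thm:Wdd4inYdd4}, reduce to $\pi_{[n-1]}(\mathbf{p})\in V_{3,n-1}$, and conclude via Lemma~\ref{lem:envelopeinduction} once the containment $E_{\pi_{[n-1]}(\mathbf{p})}\subseteq\bigcap_I E_{\pi_I(\mathbf{p})}$ is shown to be an equality. Two of your side steps diverge in minor ways. For the sub-case $\pi_{[n-1]}(\mathbf{p})\in Y_{3,n-1}$ the paper simply reorders the points so that the omitted point leaves a non-degenerate configuration (and, when no such reordering exists, builds a chain of three lines directly from the intersection of two hyperplanes); you instead attack this case head-on. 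Your route can be made to work: for $J=I\cup\{n\}$ the vanishing of $\psi_{I,J}$ at such a $\mathbf{p}$ is equivalent to the six points $\pi_I(\mathbf{p})$, projected from $p_n$, lying on a conic, and Theorem~\ref{thm:eqnV2n} then puts all $n-1$ coplanar points on a (possibly degenerate) conic in $H$ --- but the sign bookkeeping relating $\psi_I$ to $\phi$ of the projection must actually be checked, e.g.\ by evaluating on a configuration already known to lie in $V_{3,7}$. You also omit the preliminary reduction to configurations of distinct points, which the subsequent classification requires; it is an easy consequence of the inductive hypothesis but needs to be stated.

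The genuine gap is that the heart of the proof --- verifying the envelope equality --- is only announced, never performed. This is not a routine verification: it is exactly the content of Figure~\ref{fig:sixpoint}, Table~\ref{tbl:configurations}, and Figure~\ref{fig:exceptionalcase}, namely an exhaustive (up to redundancy) classification of how $n-1\ge 7$ distinct points can distribute over the four types of quasi-Veronese cubics in $\PP^3$, together with, in each case, an exhibited non-degenerate six-point sub-configuration whose envelope equals $E_{\pi_{[n-1]}(\mathbf{p})}$ --- or, in the three exceptional cases of Figure~\ref{fig:exceptionalcase}, a pair of six-point sub-configurations whose envelopes intersect in the curve itself. Your proposal correctly anticipates where the difficulty lies (clustered points forcing many six-point subsets to be degenerate; reducible curves not pinned down by any single six-point subset), but it offers no argument that the ``careful combinatorial accounting'' actually closes, and the existence of the exceptional cases shows that the naive expectation that one well-chosen $I$ suffices is false. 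Until that enumeration is carried out and each case checked against Lemma~\ref{lem:envelopeinduction}, the proof is incomplete.
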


\begin{proof}
We use induction on~$n$. The base case, $n = 7$, is Theorem~\ref{thm:Wdd4inYdd4}, and for arbitrary $n$ one containment is given by Lemmas~\ref{lem:VdninWdn}~and~\ref{lem:YdninWdn}.  Let $\mathbf{p} \in W_{3, n} \setminus Y_{3,n}$; then we must show that $\mathbf{p}\in V_{3,n}$.  By the inductive hypothesis, we may assume that all points of~$\mathbf{p}$ are distinct.  We claim that by reordering the points if necessary, we may also assume that $\pi_{[n-1]}(\mathbf{p}) \notin Y_{3,n-1}$.  Indeed, if there is a plane $H_1 \subseteq \PP^3$ containing all but one point of~$\mathbf{p}$ and another plane $H_2 \subseteq \PP^3$ containing all but a different point, then the line $H_1\cap H_2 \subseteq \PP^3$ contains all but two points, so the union of this line with the line between those two remaining points and any third line joining these two lines yields a quasi-Veronese curve through all~$n$~points, thus showing that $\mathbf{p}\in V_{3,n}$ thereby verifying the claim.  So, we have $\pi_{[n-1]}(\mathbf{p}) \in W_{3,n-1} \setminus Y_{3,n-1}$ and hence by the inductive hypothesis, $\pi_{[n-1]}(\mathbf{p})\in V_{3,n-1}$.  

If~$\pi_{[n-1]}(\mathbf{p})$ contains one of the non-degenerate six-point sub-configurations $\pi_I(\mathbf{p})$, $I\in\binom{[n-1]}{6}$, depicted in Figure~\ref{fig:sixpoint}, then there is a unique quasi-Veronese curve through $\pi_{I}(\mathbf{p})$ and so~$E_{\pi_{[n-1]}(\mathbf{p})} = E_{\pi_{I}(\mathbf{p})}$.  In this case we may apply Lemma \ref{lem:envelopeinduction} to conclude that $\mathbf{p}\in V_{3,n}$.

\begin{figure}[!ht]
\centering
\includegraphics[scale=0.55,valign=t]{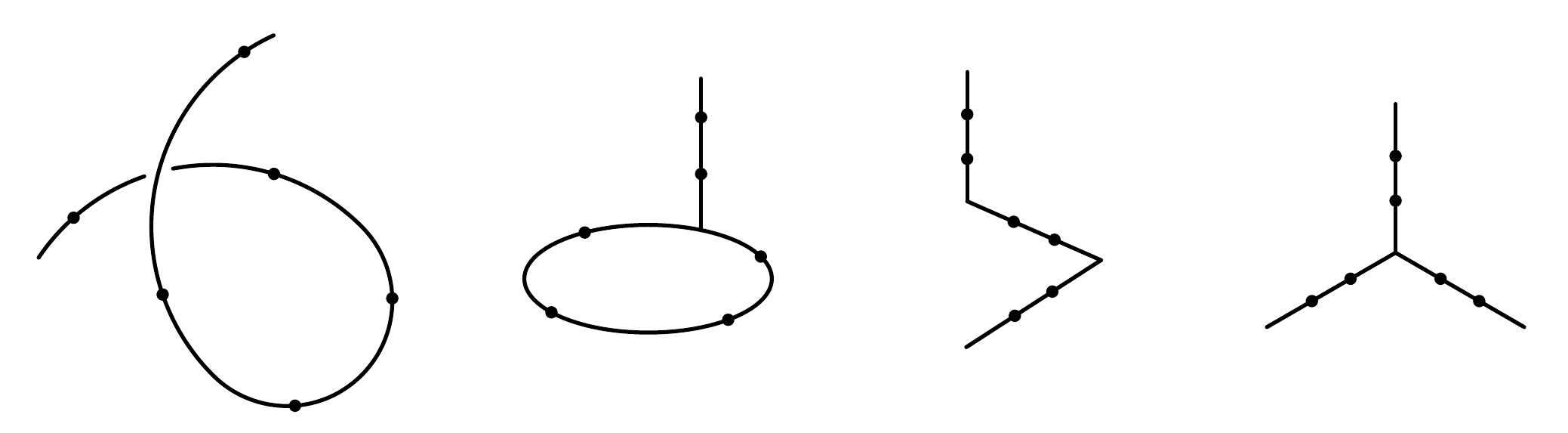}
\caption{Six-point configurations whose Veronese envelope is a curve.}
\label{fig:sixpoint}
\end{figure}

Thus it is sufficient to consider the point configurations without such sub-configurations. Table~\ref{tbl:configurations} and Figure~\ref{fig:exceptionalcase} show all remaining possibilities of~$\pi_{[n-1]}(\mathbf{p})$. Table~\ref{tbl:configurations} also shows the Veronese envelopes $E_{\pi_{[n-1]}(\mathbf{p})}$, and for each one it illustrates a single non-degenerate six-point sub-configuration $\pi_{I}(\mathbf{p})$ whose Veronese envelope equals that of~$\pi_{[n-1]}(\mathbf{p})$. The cases in Figure~\ref{fig:exceptionalcase} are isolated from the others because there $E_{\pi_{[n-1]}(\mathbf{p})}$, which is the curve itself, is not the Veronese envelope of a single non-degenerate six-point sub-configuration but instead is the intersection of two such Veronese envelopes.  In all such cases we can apply again Lemma~\ref{lem:envelopeinduction} to deduce that $\mathbf{p} \in V_{3, n}$. Let us explain how we computed this list. First of all, one can consider all possible quasi-Veronese curves and list all possible ways $\pi_{[n-1]}(\mathbf{p})$ can distribute on them. Then many of these cases are redundant because $\pi_{[n-1]}(\mathbf{p})$ can be supported on different quasi-Veronese curves. The cases shown are what is left after eliminating the redundant cases. Observe that there are choices involved in this process: for instance, in the second case in Table~\ref{tbl:configurations}, we have that $\pi_{[n-1]}(\mathbf{p})$ could also be supported on a quasi-Veronese curve consisting of three concurrent lines.
\end{proof}

\begin{table}[!ht]
\centering
\begin{tabular}{|>{\centering\arraybackslash}m{1.5cm}||>{\centering\arraybackslash}m{2.1cm}|>{\centering\arraybackslash}m{2.1cm}|>{\centering\arraybackslash}m{2.1cm}|>{\centering\arraybackslash}m{2.1cm}|>{\centering\arraybackslash}m{2.1cm}|}\hline 
$\pi_{[n-1]}(\mathbf{p})$ 
& \includegraphics[scale=0.6]{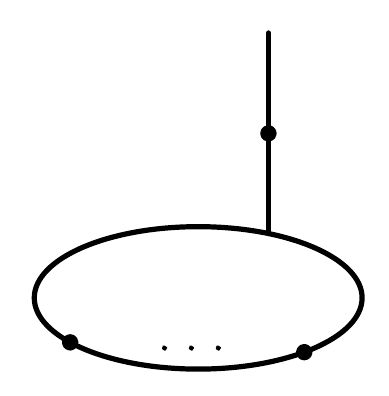} 
& \includegraphics[scale=0.6]{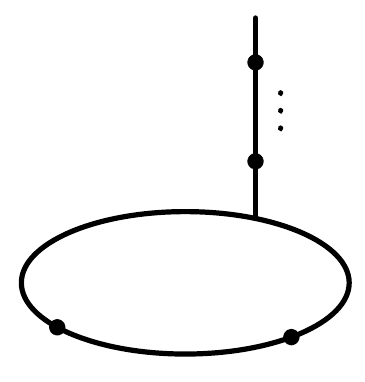}
& \includegraphics[scale=0.6]{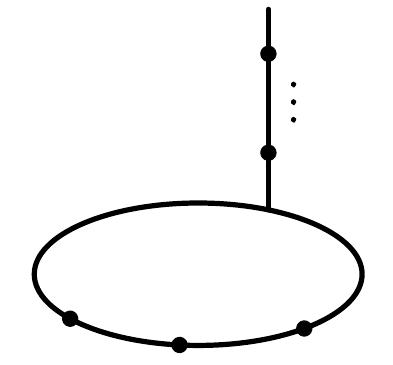}
& \includegraphics[scale=0.6]{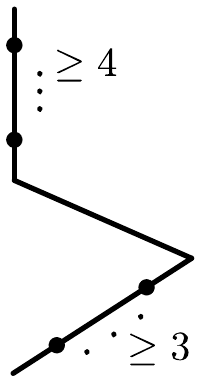}
\\ \hline
$E_{\pi_{[n-1]}(\mathbf{p})}$ & cone over a conic & $\PP^{3}$ & $\PP^{2} \cup \PP^{1}$ & $\PP^{3}$  \\ \hline
$\pi_{I}(\mathbf{p})$ 
& \includegraphics[scale=0.6]{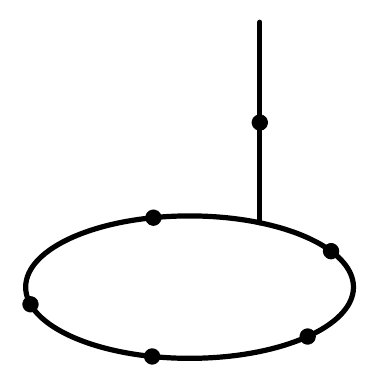} 
& \includegraphics[scale=0.6]{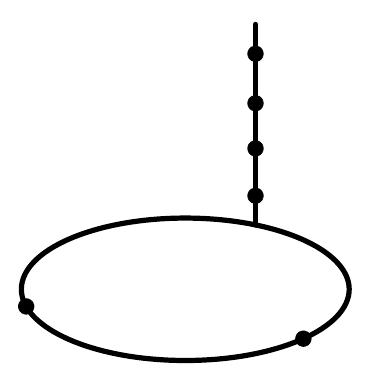} 
& \includegraphics[scale=0.6]{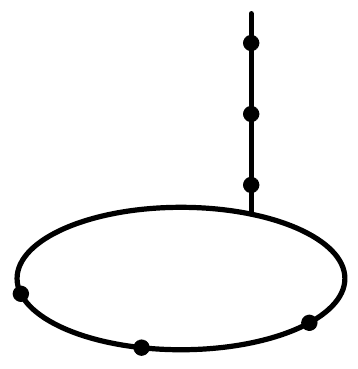} 
& \includegraphics[scale=0.6]{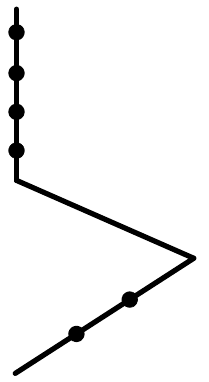}
\\ \hline \hline
$\pi_{[n-1]}(\mathbf{p})$
& \includegraphics[scale=0.6]{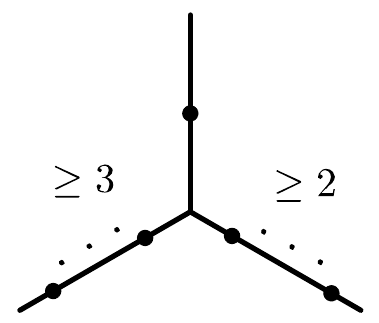} 
& \includegraphics[scale=0.6]{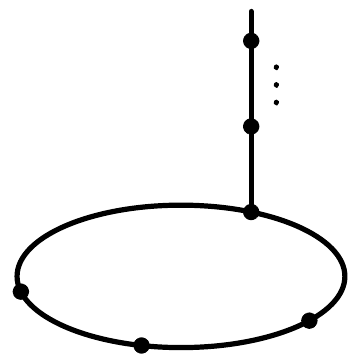}
& \includegraphics[scale=0.6]{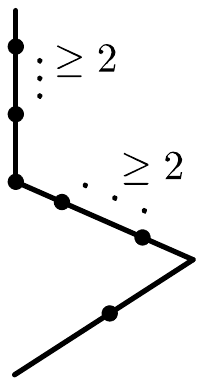}
& \\ \hline
$E_{\pi_{[n-1]}(\mathbf{p})}$ & $\PP^{2} \cup \PP^{2}$ & $\PP^{2}\cup\PP^{1}$ & $\PP^{2}\cup\PP^{2}$ &  \\ \hline
$\pi_{I}(\mathbf{p})$
& \includegraphics[scale=0.6]{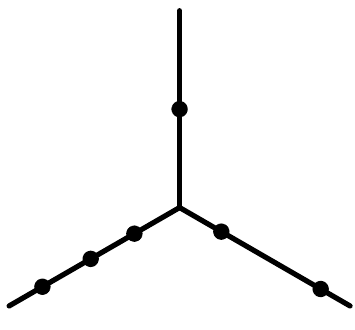} 
& \includegraphics[scale=0.6]{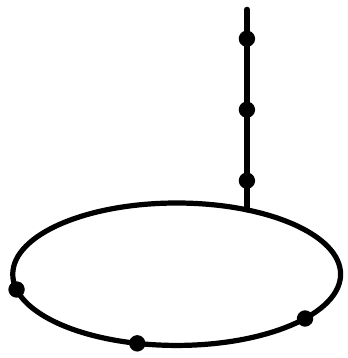}
& \includegraphics[scale=0.6]{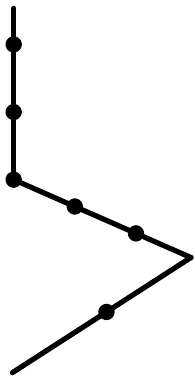}
& \\ \hline
\end{tabular}
\caption{Configurations of $n-1\geq7$ points not containing the six-point sub-configurations in Figure~\ref{fig:sixpoint}, and for each one a six-point sub-configuration with the same Veronese envelope.}
\label{tbl:configurations}
\end{table}

\begin{figure}[!ht]
\centering
\begin{tabular}{ccccc}
$E\Big($\includegraphics[scale=0.6, valign=m]{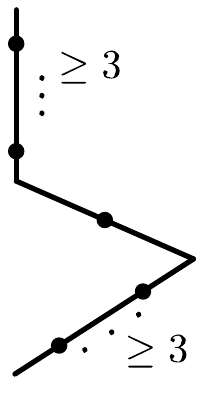}$\Big)$ & = &
$E\Big($\includegraphics[scale=0.6, valign=m]{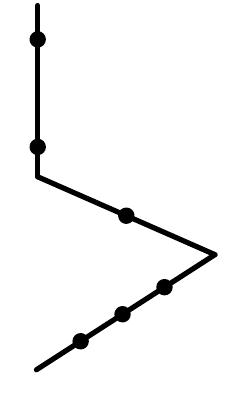}$\Big)$ & $\cap$ &
$E\Big($\includegraphics[scale=0.6, valign=m]{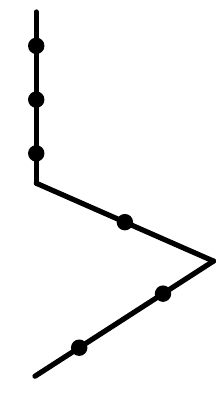}$\Big)$\\
$E\Big($\includegraphics[scale=0.6, valign=m]{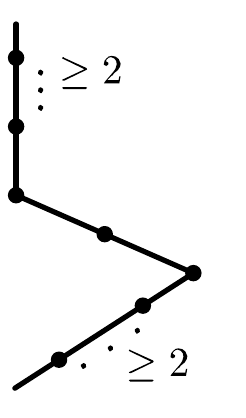}$\Big)$ & = &
$E\Big($\includegraphics[scale=0.6, valign=m]{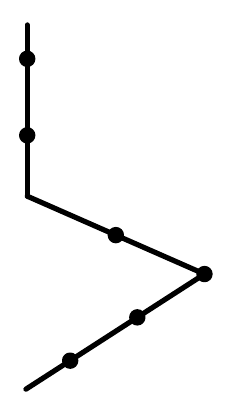}$\Big)$ & $\cap$ &
$E\Big($\includegraphics[scale=0.6, valign=m]{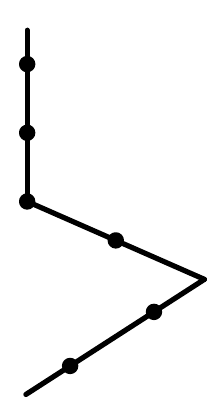}$\Big)$\\
$E\Big($\includegraphics[scale=0.6, valign=m]{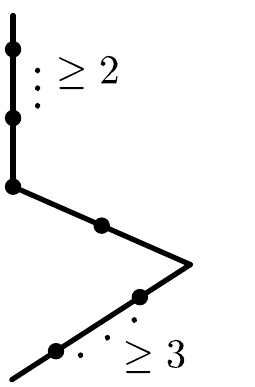}$\Big)$ & = &
$E\Big($\includegraphics[scale=0.6, valign=m]{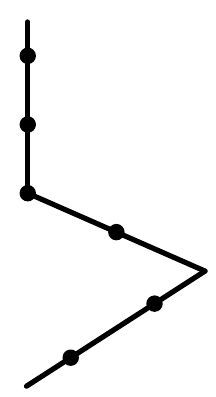}$\Big)$ & $\cap$ &
$E\Big($\includegraphics[scale=0.6, valign=m]{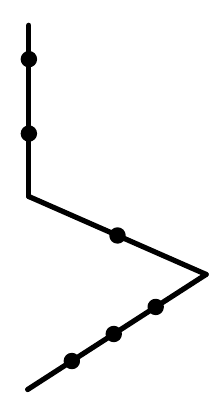}$\Big)$
\end{tabular}
\caption{On the left, point configurations $\pi_{[n-1]}(\mathbf{p})$ whose Veronese envelopes, the curve itself, are the intersection of the Veronese envelopes of the two six-point sub-configurations on the right. Here $E$ of a pointed curve refers to the Veronese envelope of the point configuration.}
\label{fig:exceptionalcase}
\end{figure}

We expect that the preceding result extends to all higher dimensions:

\begin{conjecture}\label{conj:twocomps}
For all~$d \ge 4$ and $n \ge d+5$, set-theoretically we have $W_{d, n} = V_{d, n} \cup Y_{d, n}$.
\end{conjecture}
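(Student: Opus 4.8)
The plan is to extend the inductive argument used for~$d=3$ in Theorem~\ref{thm:W3ninV3n} to all~$d \ge 4$. Since the containment $V_{d,n} \cup Y_{d,n} \subseteq W_{d,n}$ is already given by Lemmas~\ref{lem:VdninWdn} and~\ref{lem:YdninWdn}, and the base case $n = d+4$ is Theorem~\ref{thm:Wdd4inYdd4}, the entire problem reduces to the reverse set-theoretic inclusion, proved by induction on~$n$. Fixing $\mathbf{p} \in W_{d,n} \setminus Y_{d,n}$, the goal is to produce a quasi-Veronese curve through all~$n$ points, i.e.\ to show $\mathbf{p} \in V_{d,n}$. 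The engine for the inductive step is already in place: Lemma~\ref{lem:envelopeinduction} is stated for every~$d$, so once the inductive hypothesis places $\pi_{[n-1]}(\mathbf{p})$ in $V_{d,n-1}$ it suffices to verify the single geometric hypothesis of that lemma, namely the equality of Veronese envelopes
\[
	E_{\pi_{[n-1]}(\mathbf{p})} = \bigcap_{I} E_{\pi_I(\mathbf{p})},
\]
the intersection ranging over non-degenerate $(d+3)$-point sub-configurations $\pi_I(\mathbf{p})$ with $I \in \binom{[n-1]}{d+3}$ (the inclusion $\subseteq$ being automatic). Thus the whole conjecture is equivalent to an envelope-pinning statement that is intrinsic to the spaces $V_{d,m}$ and makes no further reference to~$W_{d,n}$.

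Before reaching that core statement, two reductions must be carried out exactly as in the $d=3$ proof, and both require a little care in higher dimensions. First, if two points of~$\mathbf{p}$ coincide then forgetting a repeated coordinate does not change non-degeneracy, so the inductive hypothesis applied to $\pi_{[n-1]}(\mathbf{p}) \in W_{d,n-1} \setminus Y_{d,n-1}$ places it in~$V_{d,n-1}$ and the repeated point lies on the same quasi-Veronese curve; hence we may assume the points are distinct. Second, after reordering we must arrange that $\pi_{[n-1]}(\mathbf{p}) \notin Y_{d,n-1}$. The exceptional situation is when \emph{every} single-point deletion is degenerate; the $d=3$ argument intersected two deletion hyperplanes to obtain a line carrying all but two points and then completed a chain of three lines. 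For general~$d$, intersecting two such hyperplanes $H_1, H_2$ yields a $\PP^{d-2}$ containing all but two of the points, and one would build the desired curve by producing a quasi-Veronese curve of degree~$d-2$ in this~$\PP^{d-2}$ through those points and attaching two lines for the remaining pair; justifying that the $\PP^{d-2}$-configuration lies on such a curve is itself an instance of the lower-dimensional case, so this step should be folded into a simultaneous induction on~$d$.

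The heart of the proof is the envelope equality above. When $\pi_{[n-1]}(\mathbf{p})$ is in linearly general position the statement is immediate: by Castelnuovo's Lemma the $n-1 \ge d+3$ points lie on a unique rational normal curve~$C$, no reducible quasi-Veronese curve can pass through $d+3$ general points (its low-degree components would over-concentrate points in proper linear spans), so $E_{\pi_{[n-1]}(\mathbf{p})} = C$, and every general $(d+3)$-point subset determines this same~$C$. All the difficulty is concentrated in the \emph{special} configurations, where the points distribute over a reducible quasi-Veronese curve and the envelope becomes a union of linear spaces and lower-degree rational normal curves. The strategy is to use Artin's structure lemma---each quasi-Veronese curve is a tree of rational normal curves meeting \'etale-locally like coordinate axes---to enumerate the ways $n-1$ distinct points can be supported, and then, for each combinatorial type, to exhibit one or two non-degenerate $(d+3)$-point sub-configurations whose envelopes already cut out $E_{\pi_{[n-1]}(\mathbf{p})}$. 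This is precisely what Table~\ref{tbl:configurations} and Figure~\ref{fig:exceptionalcase} accomplish, by hand, for~$d=3$.

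The main obstacle is that this final case analysis, a short finite table for~$d=3$, grows without control as~$d$ increases: the number of combinatorial types of quasi-Veronese curves in~$\PP^d$ (trees of rational normal curves over all degree partitions of~$d$, with all admissible incidences) explodes, and so does the bookkeeping of point distributions and the choice of pinning sub-configurations. A workable resolution will almost certainly require replacing the enumeration by a uniform, dimension-inductive argument: peel off a linear (degree-one) component of the envelope as in the proof of Theorem~\ref{thm:Wdd4inYdd4}---using the Gale transform to pass to a strongly non-degenerate configuration in a smaller projective space---reduce the envelope statement in~$\PP^d$ to the corresponding statement in~$\PP^{d-1}$, and invoke the inductive hypothesis on~$d$. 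Making the envelope equality behave well under this peeling, and in particular controlling how the $(d+3)$-point pinning sets interact with the Gale transform, is the key difficulty that the conjecture leaves open.
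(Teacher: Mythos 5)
First, be aware that the statement you are proving is presented in the paper as a \emph{conjecture}, not a theorem: the paper offers no proof for $d\ge 4$, and in the remark immediately following it the authors say that the brute-force enumeration used for $d=3$ ``quickly becomes unwieldy'' and that a more conceptual argument is likely necessary. Your proposal is therefore best read as a roadmap rather than a proof, and to your credit your final paragraph says as much. The skeleton you set up---one containment from Lemmas~\ref{lem:VdninWdn} and~\ref{lem:YdninWdn}, base case from Theorem~\ref{thm:Wdd4inYdd4}, induction on $n$ driven by Lemma~\ref{lem:envelopeinduction}, so that everything reduces to the envelope equality $E_{\pi_{[n-1]}(\mathbf{p})} = \bigcap_{I} E_{\pi_I(\mathbf{p})}$---is exactly the skeleton of the paper's proof of Theorem~\ref{thm:W3ninV3n}, and it is the natural thing to try.

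The genuine gap is twofold, and neither part is closed by your sketch. First, the envelope-pinning equality for special (non-linearly-general) configurations: for $d=3$ this is verified by the finite enumeration in Table~\ref{tbl:configurations} and Figure~\ref{fig:exceptionalcase}, and for $d\ge 4$ no such enumeration, nor any uniform substitute, is carried out. Your proposed ``peeling'' of linear components via the Gale transform is an idea, not an argument: you do not show that the envelope equality, or membership in the relevant $W$, behaves well under restriction to a hyperplane section or under the Gale transform of a sub-configuration, and you explicitly flag this as the open difficulty. Second, the preliminary reduction to $\pi_{[n-1]}(\mathbf{p}) \notin Y_{d,n-1}$ also breaks as stated for $d\ge 4$: intersecting two deletion hyperplanes puts $n-2$ of the points in a $\PP^{d-2}$, but unlike the $d=3$ case---where $\PP^{d-2}$ is a line and the requisite curve is a chain of three lines for free---you would need those $n-2$ points to lie on a quasi-Veronese curve of degree $d-2$, and membership of $\mathbf{p}$ in $W_{d,n}$ does not obviously descend to membership of the sliced configuration in $W_{d-2,n-2}$, so the proposed simultaneous induction on $d$ has nothing to feed on. In short, the proposal correctly locates where the difficulty lives but does not resolve it, which is consistent with the statement remaining an open conjecture in the paper.
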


The combinatorics of our proof technique in the~$d=3$~case, which involves a brute-force enumeration of many cases, quickly becomes unwieldy; a more conceptual proof would be desirable and is likely necessary to tackle this conjecture.

\begin{remark}
Let us pause a moment to recall where things stand now.  We found equations that cut out $V_{2,n}$ scheme-theoretically and classified which of these equations can be omitted to still cut out $V_{2,n}$ set-theoretically.  For~$d \ge 3$ we used the Gale transform to find equations of a scheme that set-theoretically we conjecture satisfies $W_{d,n} = V_{d,n} \cup Y_{d,n}$, where $Y_{d,n}$ is the locus of degenerate point configurations.  We proved this conjecture when~$d \ge 3$ and~$n=d+4$, and also when~$d=3$ and~$n \ge d+4$.  The variety~$V_{d,n}$ is irreducible (Lemma~\ref{lem:Vdnirreducible}), and it is well-known that the determinantal variety~$Y_{d,n}$ is irreducible.  Thus, for each pair~$(d,n)$ the equality $W_{d,n} = V_{d,n} \cup Y_{d,n}$ means that either (1)~$Y_{d,n} \subseteq V_{d,n}$, in which case the Gale equations defining $W_{d,n}$ do indeed cut out the Veronese compactification~$V_{d,n}$ set-theoretically, or (2)~the variety $(W_{d,n})_{red}$ has precisely two irreducible components, namely the Veronese compactification~$V_{d,n}$ and the degenerate point configuration loci~$Y_{d,n}$.

Consider a case where $(W_{d,n})_{red}$ does have these two irreducible components.  In order to set-theoretically cut $W_{d,n}$ down to the irreducible component we care about, $V_{d,n}$, we need polynomials that vanish on~$V_{d,n}$ but are nonzero at a general point of~$Y_{d,n}$.  However, by the first fundamental theorem of invariant theory, any~$\SL_{d+1}$-invariant multi-homogeneous polynomial on~$(\PP^d)^n$ is a polynomial in the maximal minors of the matrix of homogeneous coordinates, so such a polynomial must vanish on~$Y_{d,n}$ since the latter is defined by the vanishing of these minors.  Therefore, the polynomials that cut $W_{d,n}$ down to~$V_{d,n}$ \emph{cannot} be invariant with respect to projectivities.  This largely rules out the possibility of constructing them in a geometric manner and is part of the reason we believe it will be quite difficult to find a complete set of equations for~$V_{d,n}$, even set-theoretically in the case~$d=3$.
\end{remark}

\begin{proposition}\label{prop:irreducible}
Suppose that~$d \ge 3$ and~$n \ge d+4$. We have $Y_{d, n} \subseteq V_{d, n}$ if and only if \[(d, n) \in \{(3, 7), (3, 8), (4, 8)\}.\] 
\end{proposition}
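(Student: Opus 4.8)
The plan is to settle the equivalence by a dimension comparison, reserving an honest geometric construction only for the three exceptional pairs. By Lemma~\ref{lem:Vdnirreducible} we have $\dim V_{d,n} = d^2 + 2d + n - 3$, while $Y_{d,n} \subseteq (\PP^d)^n$ is the generic determinantal variety cut out by the maximal minors of a $(d+1)\times n$ matrix, of codimension $n-d$, so $\dim Y_{d,n} = nd - n + d$. A short computation gives
\[
\dim V_{d,n} - \dim Y_{d,n} = d^2 + d - 3 - n(d-2),
\]
so $\dim Y_{d,n} \le \dim V_{d,n}$ exactly when $n \le d + 3 + \tfrac{3}{d-2}$. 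Combined with the standing hypotheses $d \ge 3$ and $n \ge d+4$, this already forces $Y_{d,n} \not\subseteq V_{d,n}$ for every pair outside the finite list $\{(3,7),(3,8),(3,9),(4,8),(5,9)\}$, since for all others $\dim Y_{d,n} > \dim V_{d,n}$; in particular no pair with $d \ge 6$ survives, as then $\tfrac{3}{d-2} < 1 < 4$.

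Next I would dispose of the two borderline pairs $(3,9)$ and $(5,9)$, which are precisely the cases of equality $\dim Y_{d,n} = \dim V_{d,n}$ (they occur because $(d-2)\mid 3$ forces $d\in\{3,5\}$ and then $n=9$). Here both varieties are irreducible of the same dimension, so a containment $Y_{d,n} \subseteq V_{d,n}$ would be an equality; but $V_{d,n}$ contains strongly non-degenerate configurations, e.g.\ distinct points on a genuine rational normal curve lying in $U_{d,n}$, whereas every point of $Y_{d,n}$ is degenerate. Hence $Y_{d,n} \ne V_{d,n}$, and therefore $Y_{d,n} \not\subseteq V_{d,n}$.

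It remains to prove the containment for the three surviving pairs $(3,7)$, $(3,8)$, $(4,8)$, which are exactly those with $\dim Y_{d,n} < \dim V_{d,n}$. Since $Y_{d,n}$ is irreducible and $V_{d,n}$ is closed, it suffices to show that the generic point of $Y_{d,n}$ lies in $V_{d,n}$, i.e.\ that $n$ general points spanning a fixed hyperplane $\PP^{d-1} \subseteq \PP^d$ lie in $V_{d,n}$. By Proposition~\ref{prop:basicprops} it is enough to hit such a configuration by a genus-zero stable map of degree $d$ to $\PP^d$, and I would produce one whose image is a rational degree-$d$ curve inside $\PP^{d-1}$. Concretely, consider the total evaluation map $\nu \colon \overline{\rM}_{0,n}(\PP^{d-1}, d) \to (\PP^{d-1})^n$; using the dimension formula one checks $\dim \overline{\rM}_{0,n}(\PP^{d-1}, d) = d^2 + d - 4 + n$, which is $\ge (d-1)n = \dim(\PP^{d-1})^n$ exactly when $n \le d + 3 + \tfrac{2}{d-2}$, holding strictly for $(3,7)$ and with equality for $(3,8)$ and $(4,8)$. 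If $\nu$ is dominant, then $n$ general points of $\PP^{d-1}$ are the marked-point images of some stable map; post-composing with a linear embedding $\PP^{d-1}\hookrightarrow\PP^d$ gives a genus-zero degree-$d$ stable map to $\PP^d$ through the configuration, so it lies in $V_{d,n}$, and the $\PGL_{d+1}$-invariance of $V_{d,n}$ lets the hyperplane vary, covering the generic point of $Y_{d,n}$.

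The crux — and the step I expect to be the main obstacle — is establishing dominance of $\nu$ in the equal-dimensional cases. For $d=3$ this is classical: $7$ general points of $\PP^2$ lie on a positive-dimensional family of rational plane cubics, and $8$ general points lie on exactly $N_3 = 12$ of them, so $\nu$ is dominant for $\overline{\rM}_{0,7}(\PP^2,3)$ and $\overline{\rM}_{0,8}(\PP^2,3)$. The delicate case is $(4,8)$, where dominance of $\overline{\rM}_{0,8}(\PP^3,4)\to(\PP^3)^8$ is equivalent to positivity of the genus-zero Gromov–Witten invariant counting rational quartics through $8$ general points of $\PP^3$. This positivity is \emph{not} automatic from the dimension count — the analogous equal-dimensional evaluation for conics, $\overline{\rM}_{0,4}(\PP^3,2)\to(\PP^3)^4$, fails to be dominant because degree-two rational curves are planar — so I would establish it either by a direct enumerative/degeneration computation or by exhibiting a single unobstructed rational quartic through $8$ points at which the differential of $\nu$ is an isomorphism. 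Once dominance is secured in all three cases, the containment $Y_{d,n}\subseteq V_{d,n}$ follows, completing the equivalence.
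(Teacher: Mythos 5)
Your overall strategy coincides with the paper's: a dimension count $\dim Y_{d,n}=nd-n+d$ versus $\dim V_{d,n}=d^2+2d+n-3$ rules out all but finitely many pairs, and the three surviving pairs are handled by exhibiting rational degree-$d$ curves inside a hyperplane through the given points. Your treatment of the negative direction is in fact slightly more careful than the paper's: the paper only records the non-strict inequality $\dim Y_{d,n}\ge\dim V_{d,n}$ and says ``for dimension reasons,'' whereas you isolate the equality cases $(3,9)$ and $(5,9)$ and dispose of them by irreducibility of both loci plus $Y_{d,n}\ne V_{d,n}$ --- which is the implicit content of the paper's argument made explicit. For $(3,7)$ and $(3,8)$ your route (dominance of $\overline{\rM}_{0,n}(\PP^2,3)\to(\PP^2)^n$ via the $12$ nodal cubics through $8$ general points) differs mildly from the paper's, which instead takes an \emph{arbitrary} $8$-point planar configuration, finds a singular cubic in the pencil through it via the discriminant, and invokes Piene--Schlessinger \cite{PS85} to realize that cubic (with an embedded point) as a flat limit of twisted cubics; your version proves the containment only at the generic point of $Y_{3,8}$, which suffices since $Y_{3,8}$ is irreducible and $V_{3,8}$ is closed, and it avoids the Hilbert-scheme input. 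Both approaches then reduce $(4,8)$ to the same enumerative fact.

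The one genuinely incomplete step is exactly the one you flag: positivity of the count of rational quartics in $\PP^3$ through $8$ general points. You correctly observe that this does not follow from the dimension count alone (your conic example is apt), but you leave it as something ``to be established by a direct enumerative/degeneration computation.'' The paper closes this by citing the genus-zero Reconstruction Theorem (\cite[\S4.4]{KV07} in characteristic zero, \cite{Pom12} in positive characteristic), which computes the relevant Gromov--Witten invariant to be $4$ (see \cite[Equation~(3.9)]{DFI95}), together with the remark that this enumerative count is genuinely effective here; in characteristic zero one can alternatively invoke interpolation for normal bundles \cite{ALY15} to get existence without the count. Until you supply one of these inputs, your argument establishes $Y_{4,8}\subseteq V_{4,8}$ only conditionally; everything else in your proposal is sound and assembles correctly via Proposition~\ref{prop:basicprops} and the $\PGL_{d+1}$-invariance of $V_{d,n}$.
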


\begin{proof}
Assume $(d,n)\neq(3,7),(3,8),(4,8)$. We claim $Y_{d,n}\not\subseteq V_{d,n}$ for dimension reasons:
\[\dim(Y_{d,n})=nd-n+d\geq d^2+2d+n-3=\dim(V_{d,n}).\]
The equality on the left follows from a simple parameter count and the equality on the right is Lemma~\ref{lem:Vdnirreducible} (or a simple parameter count as well), so it suffices to establish the inequality in the middle. If~$d=3,4$ and~$n\geq9$ then this inequality is clear, so suppose $d\geq 5$.  By rewriting the inequality as~$n(d-2)\geq d^2+d-3$, we see that it holds since \[n(d-2)\geq (d+4)(d-2)\geq d^2+d-3,\] where the first inequality here uses $n \ge d+4$ and the second one uses $d\geq5$.

For the converse, first suppose that $d = 3$ and~$n = 8$. For any 8 points~$\mathbf{p}$ in~$\PP^{2}$, there is a pencil of cubic curves passing through them. Because the discriminant of singular cubic curves is a hypersurface in~$|\cO_{\PP^2}(3)|$, there is a singular cubic curve passing through~$\mathbf{p}$. By embedding~$\PP^2$ as a plane in~$\PP^3$ and adding an embedded spatial point to this singular cubic at one of its singular points, it follows from~\cite{PS85} that there is a family of twisted cubics whose flat limit is this singular plane cubic curve (with embedded point), and so $\mathbf{p}$ is in the closure of~$U_{3, 8}$.  Thus $Y_{3,8} \subseteq V_{3, 8}$, and hence $Y_{3,7} \subseteq V_{3,7}$ as well.

Lastly, let~$\mathbf{p} \in Y_{4, 8}$ be a general degenerate point configuration, say~$\mathbf{p} \subseteq \PP^{3} \subseteq \PP^{4}$. By applying Gromov-Witten theory, in particular the Reconstruction Theorem~(\cite[\S4.4]{KV07} for characteristic zero, \cite{Pom12} for positive characteristic), we can count the number of rational curves of degree~4 passing through~$\mathbf{p}$, which is four (see~\cite[Equation~(3.9)]{DFI95}). In characteristic zero, we may also rely on interpolation theory to guarantee the existence (but not the precise number) of such a rational curve (see~\cite{ALY15}). Thus $V_{4,8}$ contains a general point of~$Y_{4,8}$ and hence it contains all of~$Y_{4,8}$.
\end{proof}

\begin{corollary}\label{cor:equationsV37V38V48}
We have explicit set-theoretic equations for~$V_{3,7},V_{3,8}$, and~$V_{4,8}$.
\end{corollary}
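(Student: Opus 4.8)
The plan is to combine the three preceding results, since the corollary is an immediate consequence of them. For each of the three pairs $(d,n) \in \{(3,7),(3,8),(4,8)\}$ I would first invoke the appropriate set-theoretic equality $W_{d,n} = V_{d,n} \cup Y_{d,n}$, and then use the containment $Y_{d,n} \subseteq V_{d,n}$ supplied by Proposition~\ref{prop:irreducible} to collapse the union down to the single component~$V_{d,n}$.

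In detail: for $(d,n) = (3,7)$ and $(3,8)$, Theorem~\ref{thm:W3ninV3n} gives $W_{3,n} = V_{3,n} \cup Y_{3,n}$ as sets; for $(d,n) = (4,8)$, where $n = d+4$, Theorem~\ref{thm:Wdd4inYdd4} gives $W_{4,8} = V_{4,8} \cup Y_{4,8}$ as sets. Each of these three pairs is, by Proposition~\ref{prop:irreducible}, exactly one of the cases in which $Y_{d,n} \subseteq V_{d,n}$, so in every case the right-hand side reduces to~$V_{d,n}$ and we obtain $W_{d,n} = V_{d,n}$ set-theoretically.

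To finish, I would observe that $W_{d,n}$ is by construction the common zero locus of the explicit polynomials $\{\psi_{I,J}\}$ (which reduce to $\{\psi_I\}$ when $n = d+4$, as for $(3,7)$ and $(4,8)$); these are the Gale-transformed, pulled-back conic equations written concretely in terms of maximal minors, exactly as spelled out in Example~\ref{ex:equationsV37}. Since these polynomials cut out $W_{d,n} = V_{d,n}$, they are the asserted explicit set-theoretic equations for $V_{3,7}$, $V_{3,8}$, and~$V_{4,8}$.

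As for the main obstacle, there is essentially none left at this stage: all the substance has already been packaged into the cited results, and the corollary is a one-line synthesis. The genuinely hard work lies upstream, in the two nontrivial containments of Proposition~\ref{prop:irreducible}, namely $Y_{3,8} \subseteq V_{3,8}$ (proved via a pencil-of-cubics argument together with a smoothing result that realizes a singular plane cubic with an embedded spatial point as a flat limit of twisted cubics) and $Y_{4,8} \subseteq V_{4,8}$ (proved via a Gromov--Witten count guaranteeing a degree-four rational curve through a general degenerate octuple). Granting those inputs, the present statement follows immediately.
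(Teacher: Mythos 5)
Your proposal is correct and matches the paper's own proof exactly: the paper likewise combines Theorems~\ref{thm:Wdd4inYdd4} and~\ref{thm:W3ninV3n} with the containment $Y_{d,n}\subseteq V_{d,n}$ from Proposition~\ref{prop:irreducible} to conclude $W_{d,n}=V_{d,n}$ in these three cases. Your additional remarks correctly identify where the real work lies and which theorem covers which pair $(d,n)$.
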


\begin{proof}
In these cases we have $W_{d,n} = V_{d,n}\cup Y_{d,n}$ by Theorems~\ref{thm:Wdd4inYdd4}~and~\ref{thm:W3ninV3n}, and we have $Y_{d,n} \subseteq V_{d,n}$ by Proposition~\ref{prop:irreducible}, so~$W_{d,n} = V_{d,n}$.
\end{proof}

This also brings up the second reason why finding a complete set of defining equations for~$V_{d, n}$ seems to be a very difficult problem in general:

\begin{corollary}
The pull-backs of all polynomials vanishing on $V_{3, 7} = W_{3, 7}$ do not set-theoretically cut out~$V_{3, n}$ for any~$n \ge 9$.
\end{corollary}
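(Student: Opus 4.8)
The plan is to show that these pull-backs do not cut out $V_{3,n}$ because they fail to detect the spurious component $Y_{3,n}$: I will exhibit degenerate configurations that satisfy all the pulled-back equations yet do not lie on $V_{3,n}$. Write $\mathfrak{a} = I(V_{3,7})$ for the ideal of all multi-homogeneous polynomials vanishing on $V_{3,7}$, and for each $J \in \binom{[n]}{7}$ let $\pi_J \colon (\PP^3)^n \twoheadrightarrow (\PP^3)^7$ be the forgetful map. First I would record the elementary observation that a point $\mathbf{p} \in (\PP^3)^n$ lies in the common zero locus of the pull-backs $\{\pi_J^*(g) : g \in \mathfrak{a},\, J \in \binom{[n]}{7}\}$ if and only if $g(\pi_J(\mathbf{p})) = 0$ for every $g$ and every $J$, that is, if and only if $\pi_J(\mathbf{p}) \in V_{3,7}$ for all $J$. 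Thus the set cut out by the pull-backs is exactly $\bigcap_J \pi_J^{-1}(V_{3,7})$, the locus of configurations all of whose $7$-point sub-configurations lie in $V_{3,7}$.

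Next I would produce a point of this locus lying outside $V_{3,n}$. Let $\mathbf{p} \in Y_{3,n}$ be a degenerate configuration, so all $n$ points lie in a common hyperplane of $\PP^3$. Then every $7$-point sub-configuration $\pi_J(\mathbf{p})$ is again degenerate, hence lies in $Y_{3,7}$; and $Y_{3,7} \subseteq V_{3,7}$ by Proposition~\ref{prop:irreducible}. Therefore $\pi_J(\mathbf{p}) \in V_{3,7}$ for every $J$, and by the previous paragraph $\mathbf{p}$ lies in the zero locus of all the pull-backs. This shows $Y_{3,n} \subseteq \bigcap_J \pi_J^{-1}(V_{3,7})$.

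To finish I would invoke Proposition~\ref{prop:irreducible} once more: for $n \ge 9$ the pair $(3,n)$ is not among $(3,7),(3,8),(4,8)$, so $Y_{3,n} \not\subseteq V_{3,n}$. Any $\mathbf{p} \in Y_{3,n} \setminus V_{3,n}$ is then a point of the pull-back locus that is not in $V_{3,n}$, which is exactly the desired conclusion. In fact, since $V_{3,7} = W_{3,7} = Z(\{\psi_I\})$ set-theoretically by Corollary~\ref{cor:equationsV37V38V48}, one has $\bigcap_J \pi_J^{-1}(V_{3,7}) = \bigcap_J \pi_J^{-1}(Z(\{\psi_I\})) = \bigcap_J Z(\{\psi_{I,J}\}) = W_{3,n}$, so Theorem~\ref{thm:W3ninV3n} identifies the pull-back locus precisely with $V_{3,n} \cup Y_{3,n}$, a set strictly containing $V_{3,n}$ for $n \ge 9$.

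The argument involves no hard estimate; its entire content is the structural point that equations pulled back from $7$-point configurations can never cut away $Y_{3,n}$, because any $7$ points of a degenerate configuration are themselves degenerate and hence automatically satisfy the $V_{3,7}$-equations. Accordingly, the only step requiring care is the tautological identification of the pull-back locus with $\bigcap_J \pi_J^{-1}(V_{3,7})$, after which the result is forced by the failure of $Y_{3,n} \subseteq V_{3,n}$ established in Proposition~\ref{prop:irreducible}.
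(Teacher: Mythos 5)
Your argument is correct and is essentially the paper's own proof: both exhibit a degenerate configuration (all $n$ points in a plane, general enough to avoid $V_{3,n}$ via Proposition~\ref{prop:irreducible}) whose $7$-point projections all land in $Y_{3,7}\subseteq V_{3,7}$, hence satisfy every pulled-back equation. The extra identification of the pull-back locus with $\bigcap_J\pi_J^{-1}(V_{3,7})$ and with $W_{3,n}$ is a harmless elaboration of the same idea.
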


\begin{proof}
By Proposition~\ref{prop:irreducible}, a general configuration of~$n \ge 9$ points in~$\PP^2 \subseteq \PP^3$ is outside of~$V_{3,n}$ even though its image under each projection map $(\PP^3)^n \twoheadrightarrow (\PP^3)^7$ is in~$Y_{3,7} \subseteq V_{3,7}$.
\end{proof}

A third indicator of the sudden increase in complexity when passing from~$d=2$ to~$d \ge 3$ is that $V_{d,n}$ in the latter case does not seem to have nice geometric/arithmetic properties as it does in the former.  For instance:

\begin{proposition}
\label{notnormalindegree3}
For~$n \ge 8$, $V_{3, n}$ is not normal.
\end{proposition}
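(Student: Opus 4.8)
The plan is to exploit the total evaluation map $\nu \colon \overline{\rM}_{0,n}(\PP^{3},3)\to V_{3,n}$, which by Proposition~\ref{prop:basicprops} is a proper surjective birational morphism whose source is irreducible and normal. Zariski's Main Theorem guarantees that if $V_{3,n}$ were normal then $\nu$, being proper birational onto a normal target, would have $\nu_{*}\cO_{\overline{\rM}}=\cO_{V_{3,n}}$ and hence connected fibers. Thus it suffices to exhibit a single configuration $\mathbf{p}\in V_{3,n}$ whose fiber $\nu^{-1}(\mathbf{p})$ is disconnected, and I will do this uniformly for all $n\ge 8$.

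First I would construct the configuration. Fix a nodal plane cubic $C\subseteq \PP^{2}\subseteq\PP^{3}$ with node $x$, and let $\mathbf{p}=(p_{1},\dots,p_{n})$ have $p_{1}=x$ and $p_{2},\dots,p_{n}$ chosen as general smooth points of $C$. Since $C$ is a rational curve of degree $3$, its normalization $\PP^{1}\to C\hookrightarrow\PP^{3}$, marked by preimages of the $p_{i}$, is a genus-zero degree-three stable map, so $\mathbf{p}\in V_{3,n}$ by Proposition~\ref{prop:basicprops}. The node $x$ has two distinct preimages $y_{1}\neq y_{2}$ on the normalizing $\PP^{1}$, and any marked point mapping to $x$ must sit at one of them; fixing $x_{1}=y_{1}$ or $x_{1}=y_{2}$ (with the remaining markings determined by the general points $p_{2},\dots,p_{n}$) produces two distinct stable maps $\sigma_{1},\sigma_{2}\in\nu^{-1}(\mathbf{p})$.

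Next I would prove that $\sigma_{1}$ and $\sigma_{2}$ lie in different connected components of the fiber, which gives disconnectedness. This rests on two claims. First, $\nu^{-1}(\mathbf{p})$ is a finite set whose members all have planar image: any degree-three stable map through $\mathbf{p}$ whose image has a component off the plane $H=\PP^{2}$ would force that component to meet $H$, and hence the configuration, in at most its degree many points, which by a Bézout count is too few to absorb all $n\ge 8$ coplanar points; therefore every image lies in $H$ and must be one of the rational cubics through $p_{1},\dots,p_{n}$. For $n\ge 10$ this image is forced to equal $C$ by Bézout, and for $n=8,9$ a generic choice of the points on $C$ ensures only finitely many rational cubics pass through the configuration, so the fiber is finite in all cases. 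Second, the two branch assignments cannot be bridged by a degeneration: the only candidate bridge would attach a contracted rational tail carrying solely the marked point lying over $x$, but such a tail carries just two special points and is therefore unstable. Hence $\nu^{-1}(\mathbf{p})$ is a finite set containing the two distinct isolated points $\sigma_{1},\sigma_{2}$, so it is disconnected, and $V_{3,n}$ is not normal.

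The main obstacle is exactly this fiber analysis --- certifying that $\nu^{-1}(\mathbf{p})$ carries no positive-dimensional component joining the two branch assignments. Controlling positive-dimensional families reduces to the finiteness of supporting rational cubics, which follows from the coplanarity/Bézout bound that a degree-three curve meeting a plane in more than three points must contain a planar component, together with a sufficiently general choice of the points on $C$; ruling out degenerate bridges reduces to the stability condition for contracted components. Once these two points are established, the non-normality is immediate from Zariski's Main Theorem, and since the construction depends only on placing one marked point at the node of a nodal plane cubic, it works verbatim for every $n\ge 8$.
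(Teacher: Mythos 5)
Your argument is correct in outline and uses a genuinely different configuration from the paper. The paper splits into two cases: for $n\ge 9$ it takes points distributed $3{+}3{+}3$ on a coplanar triangle of lines, so that the fiber of $\nu$ consists of three points corresponding to the three choices of central component of the chain (this requires the dual-graph analysis of Remark~\ref{whythreepointsinfiber}); for $n=8$ it takes eight general coplanar points and uses that exactly $12$ singular plane cubics pass through them. Your nodal-cubic construction treats all $n\ge 8$ uniformly and localizes the disconnectedness in the two branches of a single node, which is conceptually cleaner: for $n\ge 9$ B\'ezout (using that the node of $C$ contributes intersection multiplicity $\ge 2$ against any other cubic) forces the image to be $C$ itself, so the fiber is exactly the two branch assignments, and no enumeration of singular members of a pencil is needed. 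Two steps in your write-up are stated more loosely than they should be. First, the claim that the whole image lies in the plane $H$ does not follow merely from ``an off-plane component meets $H$ in at most its degree many points''; you must also bound the residual in-plane part, which has degree $\le 2$ and hence, since the $p_i$ lie on the irreducible cubic $C$, contains at most $6$ of them --- combining the two bounds does give a contradiction for every splitting of the degree when $n\ge 8$, but that combination is the actual argument. Second, finiteness of the fiber also requires ruling out reducible or non-reduced plane images (line plus conic, multiple covers of a line or conic), which again uses that general points of $C$ have no $3$ collinear and no $6$ conconic, and for $n=8$ it requires checking that the pencil of cubics through your eight points is not contained in the discriminant (it is not, since a general member is smooth at the node of $C$ and away from the base locus). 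Your final ``bridge'' discussion is superfluous: once the fiber is a finite set with at least two points it is discrete, hence disconnected, and Zariski's Main Theorem applies exactly as in the paper.
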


\begin{proof}
Consider a triangular configuration of three lines $\ell_{1}, \ell_{2}, \ell_{3}$ on a~$\PP^{2} \subseteq \PP^{3}$. Let~$\mathbf{p} \in (\PP^3)^9$ be distinct points such that $p_{i} \in \ell_{j}$ if and only if $i \equiv j \;\mathrm{mod}\;3$. Then~$\mathbf{p} \in V_{3, 9}$. We claim that $\ell_{1} \cup \ell_{2} \cup \ell_{3}$ is the only degeneration of a twisted cubic containing all of the~$p_{i}$. Indeed, if $C$ is a rational curve containing all the~$p_{i}$, then $C$ meets $\PP^{2}$ at least 9 times. Because $\deg C = 3 < 9$, $C$ must have an irreducible component contained in~$\PP^{2}$. The only plane curve with degree~$\le 3$ passing through all the~$p_{i}$ is $\ell_{1} \cup \ell_{2} \cup \ell_{3}$. This verifies the claim.  

Now consider the total evaluation map $\nu \colon \overline{\rM}_{0, 9}(\PP^{3}, 3) \to V_{3, 9} \subseteq (\PP^3)^9$. The fiber~$\nu^{-1}(\mathbf{p})$ consists of three distinct points, each corresponding to a stable map $f_i \colon C = C_{1} \cup C_{2} \cup C_{3} \to \PP^{3}$, $i=1,2,3$, where $C$ is a chain of~$\PP^1$s with central component $C_{2}$, and $f_i(C_{2}) = \ell_{i}$ (this is explained in Remark~\ref{whythreepointsinfiber}). Since $\nu$ is a birational projective morphism, $V_{3, 9}$ is not normal by Zariski's Main Theorem~\cite[Corollary III.11.4]{Har77} because $\overline{\rM}_{0,n}(\PP^{3}, 3)$ is an integral scheme and the fiber~$\nu^{-1}(\mathbf{p})$ is disconnected. One can easily generalize this argument to all~$n \ge 9$.

For $n=8$, let $\mathbf{p}\in V_{3,8}$ be a degenerate configuration of eight points not lying on a conic (such a configuration exists by Proposition~\ref{prop:irreducible}). For the total evaluation map $\nu \colon \overline{\rM}_{0, 8}(\PP^{3}, 3) \to V_{3, 8} \subseteq (\PP^3)^8$, the fiber $\nu^{-1}(\mathbf{p})$ corresponds to degree~$3$ stable maps $f\colon C\rightarrow\mathbb{P}^3$ such that $f(C)$ passes through these eight points, and the only possibility for~$f(C)$ is to be a singular planar cubic. Since there are finitely many, but more than one, singular planar cubics passing through eight general points (there are 12 to be precise, by considering the discriminant~\cite[Appendix~A, Prop~1.1]{Sil09}), we conclude that the fiber~$\nu^{-1}(\mathbf{p})$ is disconnected and hence again by Zariski's Main Theorem that $V_{3,8}$ is not normal.
\end{proof}

\begin{remark}
\label{whythreepointsinfiber}
Let~$\mathbf{p}\in V_{3,9}$ as in the proof of Proposition~\ref{notnormalindegree3}. We provide an argument explaining why $\nu^{-1}(\mathbf{p})$ consists of the three claimed elements.

Let~$(g \colon X\rightarrow\mathbb{P}^3,x_1,\ldots,x_9)$ be a stable map parametrized by a point in~$\nu^{-1}(\mathbf{p})$. Then $X$ has at least three irreducible components $C_1,C_2,C_3$ mapping to~$\ell_1,\ell_2,\ell_3$ in some order. Assume by contradiction that there exists an irreducible component $C\subseteq X$ different from~$C_1,C_2,C_3$. Then $C$ must be contracted by~$g$, implying that $C$ has at least three special points by the stability condition.

Let $G$ be the dual graph of~$X$, which is a tree. Let $v$ be the vertex corresponding to~$C$. Observe that $v$ cannot be a leaf of~$G$, otherwise $C$ would have at most two special points (the unique node and possibly one marking). Hence, $G$ has at most three leaves, implying that the degree of~$v$ is either $2$ or $3$. If the degree is~$3$, then the images of $C_1,C_2,C_3$ under~$g$ have a common point, which is not. So $v$ has degree~$2$, but this violates the stability condition.
\end{remark}

		

\end{document}